\documentclass{amsart}
\usepackage{amsmath}
\usepackage{amsthm}
\usepackage{amssymb}
\usepackage{tikz}
\usetikzlibrary{automata,positioning}
\usepackage{enumitem}
\usepackage{mathrsfs}
\usepackage{scalerel}

\usepackage{hyperref}

\newtheorem{theorem}{Theorem}[section]
\newtheorem{proposition}[theorem]{Proposition}

\newtheorem{alphatheorem}{Theorem}

\newtheorem*{convention*}{Convention}

\newtheorem{corollary}[theorem]{Corollary}

\theoremstyle{definition}

\newtheorem*{proposition*}{Proposition}
\newtheorem*{observation*}{Observation}
\newtheorem*{claim*}{Claim}

\newtheorem*{lemma*}{Lemma}

\newtheorem{remark}[theorem]{Remark}
\newtheorem*{remark*}{Remark}
\newtheorem{conjecture}[theorem]{Conjecture}
\newtheorem*{conjecture*}{Conjecture}

\renewcommand{\Re}{\operatorname{Re}}
\theoremstyle{plain}
\newtheorem{lemma}[theorem]{Lemma}

\newcommand{\bra}[1]{ \left( #1 \right) }

\renewcommand{\tilde}{\widetilde}

\newcommand{\abs}[1]{\left|#1\right|}

\newcommand{\norm}[1]{\left\lVert #1 \right\rVert}

\def\LMS{0}
\def\true{1}

\newcommand{\ifLMS}[2]{
\if\LMS\true
#1
\else
#2
\fi
}

\newcommand{\cP}{\mathcal{P}}

\newcommand{\fpa}[1]{\left\lVert #1 \right\rVert}\newcommand{\e}{\varepsilon}
\renewcommand{\a}{\alpha}

\usepackage{mathrsfs}
\usepackage[mathscr]{euscript}

\newcommand{\NN}{\mathbb{N}}
\newcommand{\QQ}{\mathbb{Q}}

\newcommand{\OO}{\tilde{O}}
\newcommand{\PP}{\mathbb{P}}

\DeclareMathOperator*{\EEE}{\scalerel*{\mathbb{E}}{\textstyle\sum}}
\DeclareMathOperator*{\PPP}{\scalerel*{\mathbb{P}}{\textstyle\sum}}

\newcommand{\ZZ}{\mathbb{Z}}
\newcommand{\RR}{\mathbb{R}}

\newcommand{\CC}{\mathbb{C}}
\newcommand{\UU}{\mathbb{S}^1}
\newcommand{\EE}{\mathbb{E}}

\newcommand{\cF}{\mathcal{F}}

\newcommand{\floor}[1]{\left\lfloor #1 \right\rfloor}

\newcommand{\set}[2]{\left\{ #1 \ \middle| \ #2 \right\} }

\newcommand{\parbreak}[1]{
\begin{center}
***
\end{center}
}

\usepackage{color}

\newcommand{\bb}{\mathbf}

\newcommand{\conj}{\mathscr{C}}

\usepackage{soul}

\usepackage{stmaryrd}
\newcommand{\ifbra}[1]{\left\llbracket #1 \right\rrbracket}

\subjclass[2010]{Primary: 11B85. Secondary: 11B30, 37A30}

\begin{document}

\author[A.\ Fan]{Aihua Fan}
\address[A.\ Fan]{LAMFA, UMR 7352 CNRS, University of Picardie, 33 Rue Saint Leu, 80039, Amiens, France}
\email{ai-hua.fan@u-picardie.fr}

\author[J.\ Konieczny]{Jakub Konieczny}
\address[J.\ Konieczny]{Einstein Institute of Mathematics Edmond J. Safra Campus, The Hebrew University of Jerusalem Givat Ram. Jerusalem, 9190401, Israel}
\address{Faculty of Mathematics and Computer Science, Jagiellonian University in Krak\'{o}w, \L{}ojasiewicza 6, 30-348 Krak\'{o}w, Poland}
\email{jakub.konieczny@gmail.com}

\title{On uniformity of $q$-multiplicative sequences}

\maketitle 

\begin{abstract}
	We show that any $q$-multiplicative sequence which is \emph{oscillating} of order $1$, i.e.\ does not correlate with linear phase functions $e^{2\pi i n \alpha}$ ($\alpha \in \RR)$, is Gowers uniform of all orders, and hence in particular does not correlate with polynomial phase functions $e^{2\pi i p(n)}$ ($p \in \RR[x]$). Quantitatively, we show that any $q$-multiplicative sequence which is of Gelfond type of order 1 is automatically of Gelfond type of all orders. Consequently, any such $q$-multiplicative sequence is a good weight for ergodic theorems. We also obtain combinatorial corollaries concerning linear patterns in sets which are described in terms of sums of digits. 
\end{abstract}

\makeatletter{}\section{Introduction}

    Let $q\ge 2$ be an integer. A sequence $f \colon \NN_0 \to \UU := \set{ z \in \CC}{\abs{z} = 1}$ is said to be \emph{$q$-multiplicative} if for any integer $t \in \NN_0$ and any integers $m, n \in \NN_0$ with $m < q^t$ and $q^t \mid n$ we have $f(m+n) = f(m) f(n)$
	\cite{Gelfond-1968}. A typical example of a $q$-multiplicative sequence is $\big(e( \alpha s_q(n))\big)_{n\ge 0}$
	where $e(t) = e^{2\pi i t}$, $\alpha$ is a real parameter and $s_q(n)$ is the sum of digits of 
	$n$ in its $q$-adic expansion. The classical Thue--Morse sequence
	corresponds to $\alpha=\frac{1}{2}$ and $q=2$. Combinatorial and number theoretic properties of such sequences have been extensively studied, both in the case of the sum-of-digits function \cite{MendesFrance-1967}, \cite{FouvryMauduit-1996-AA}, \cite{FouvryMauduit-1996-MA}, \cite{MauduitSarkozy-1996}, \cite{MauduitSarkozy-1997}, \cite{Drmota-2001}, \cite{FouvryMauduit-2005},  \cite{MauduitPomeranceSarkozy-2005}, \cite{DartygeTenenbaum-2006}, \cite{Hofer-2007} and in the case of general $q$-multiplicative sequences \cite{Gelfond-1968}, \cite{Delange-1972}, \cite{Coquet-1975}, \cite{Coquet-1976}, \cite{Coquet-1977}, \cite{Coquet-1980}, \cite{LesigneMauduit-1996}, \cite{IndlekoferKatai-2002}, \cite{IndlekoferKataiLee-2002}, \cite{Katai-2002}, \cite{IndlekoferLeeWagner-2005}, \cite{Mauclaire-2005}, \cite{MauduitRivat-2005}, \cite{Katai-2009}.

Note that if $f$ is $q$-multiplicative then necessarily $f(0) = 1$ and  the equality $f(m+n) = f(m) f(n)$ holds also under the weaker assumption that the sets of positions where non-zero digits appear in expansions of $m$ and $n$ in base $q$ are disjoint. Hence, a $q$-multiplicative sequence $f$ is uniquely described by its values $f(a q^t)$ for $0 \leq a < q$ and $t \geq 0$, and
\[
	f\bra{ n } = \prod_{t=0}^\infty f\bra{ n^{(t)} q^t} \quad \text{where } n = \sum_{t=0}^\infty n^{(t)}q^t, \ 0 \leq n^{(t)} < q.
\]

Ces\`{a}ro convergence of $q$-multiplicative sequences was studied by Delange \cite{Delange-1972}. As a straightforward consequence of results in \cite[p.\ 292,\ Th\'eor\`eme 1]{Delange-1972}, for any $q$-multiplicative sequence $f \colon \NN_0 \to \UU$ we have
\begin{equation}\label{eq:95:00}
	\EEE_{n<N} f(n) = \prod_{t\leq {\log_q} N} \EEE_{a < q} f(aq^t) + o(1) \text{ as } N \to \infty,
\end{equation}
where $o(1)$ denotes an error term which tends to $0$ as $N \to \infty$. As a consequence (\cite[Th\'eor\`eme 2]{Delange-1972}) the Ces\`{a}ro mean of $f$ is $0$ unless 
\begin{equation}\label{eq:95:09}
\sum_{t=0}^\infty \bra{1 - \Re{ \EEE_{a<q} f(aq^t)}} < \infty.
\end{equation}

In particular, the mean of $f$ is $0$ for (Lebesgue) almost all choices of the values $f(aq^t)$. For many $q$-automatic sequences we have a much stronger estimate
\begin{equation}\label{eq:osc-01a}
\sup_{\alpha \in \RR} \abs{ \EEE_{n<N} f(n)e(n\alpha) } \ll N^{-c}
\end{equation} 
for a constant $c > 0$.
Using the terminology introduced in \cite{Fan2017}, \eqref{eq:osc-01a} means that $f$ is of Gelfond type with Gelfond exponent bounded by $1-c$; this condition can be construed as a weak notion of uniformity. Examples of $q$-multiplicative sequences which are of Gelfond type include the Thue--Morse sequence and more generally all non-periodic strongly $q$-multiplicative sequences (see Proposition \ref{prop:strong-multi-dichotomy} for details).

In this note, we show that as soon as a $q$-multiplicative sequence obeys condition \eqref{eq:osc-01a}, it exhibits uniformity of higher order. Our first result is the  following uniform estimate on trigonometric polynomials having a $q$-multiplicative sequence as coefficients. 
Note that if $f$ is $q$-multiplicative then so is the sequence $n \mapsto f(n)e(\alpha n)$, but this is no longer the case for the sequence $n \mapsto f(n) e(p(n))$ when $p$ has degree larger than $1$.

\begin{alphatheorem}\label{thm:oscillating=>fully-oscillating}
	Let $f \colon \NN_0 \to \UU$ be a $q$-multiplicative sequence of Gelfond type (cf.\ \eqref{eq:osc-01a}). 	Then for any $d \geq 1$ there exists $c_d > 0$ such that 
	\begin{equation}\label{eq:osc-02}
		\sup_{\substack{ p \in \RR[x] \\ \deg p \leq d}} \abs{ \EEE_{n < N} f(n) e(p(n))} \ll N^{-c_d}. 
	\end{equation}
\end{alphatheorem}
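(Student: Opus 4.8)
I would argue by induction on $d$; the base case $d = 1$ is exactly hypothesis \eqref{eq:osc-01a}. For the inductive step fix a polynomial $p$ of degree $d \ge 2$ with leading coefficient $\beta$, and a small parameter $\eta > 0$ (to be chosen in terms of $d$ and $c_{d-1}$). Let $q^L$ be the least power of $q$ exceeding $d\, N^{\eta}$ and put $\gamma := d!\, q^L \beta$. By Dirichlet's theorem there is a reduced fraction $a/r$, with $r$ bounded by a fixed power of $N$, such that $\abs{\gamma - a/r}$ is correspondingly small, and I would split the argument according to whether $r$ lies below a small power of $N$ (\emph{major arc}) or not (\emph{minor arc}). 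The role of $q$-multiplicativity is to turn differenced sequences into (nearly) periodic ones at a controlled scale, via the identity $f(u + q^L v) = f(u) f(q^L v)$ valid when $0 \le u < q^L$; this is the single structural ingredient that the classical circle method is missing.

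\textbf{Minor arc.} Here I would apply the van der Corput / Weyl differencing inequality $d-1$ times. This bounds $\abs{\EEE_{n<N} f(n) e(p(n))}^{2^{d-1}}$ by $O(N^{-\eta})$ plus a $(d-1)$-fold average over $\vec h \in \{1,\dots,H\}^{d-1}$, $H = N^{\eta}$, of $\abs{\EEE_{n<N} F_{\vec h}(n)\, e(\theta_{\vec h} n + \phi_{\vec h})}$, where $\theta_{\vec h} = d!\,\beta\, h_1 \cdots h_{d-1}$ (after $d-1$ differencings the phase is linear) and $F_{\vec h}$ is the corresponding multiplicative derivative: a product $\prod_i f(n+b_i)^{\varepsilon_i}$ of $2^{d-1}$ shifts of $f$, with $\varepsilon_i \in \{\pm 1\}$, all shifts $b_i < (d-1)H$, and crucially $\sum_i \varepsilon_i = 0$. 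Writing $n = u + q^L v$ with $0 \le u < q^L$, for every $n$ with $u + \max_i b_i < q^L$ — all but an $O((d-1)H/q^L)$ proportion of $n$ — $q$-multiplicativity gives $f(n+b_i) = f(u+b_i) f(q^L v)$, whence $F_{\vec h}(n) = \bigl(\prod_i f(u+b_i)^{\varepsilon_i}\bigr) f(q^L v)^{\sum_i \varepsilon_i} = \prod_i f(u+b_i)^{\varepsilon_i}$ depends only on $u = n \bmod q^L$. Thus $F_{\vec h}$ coincides off a density-$O(N^{-\eta})$ set with a sequence of period $q^L$, and estimating a $q^L$-periodic sequence against a linear phase through its finite Fourier expansion yields $\abs{\EEE_{n<N} F_{\vec h}(n)\, e(\theta_{\vec h} n + \phi_{\vec h})} \ll \sqrt{q^L}\,\min\!\left(1, q^L/(N\,\norm{\gamma\, h_1 \cdots h_{d-1}})\right) + O(N^{-\eta})$, with $\norm{\cdot}$ the distance to $\ZZ$. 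Averaging over $\vec h$, using the divisor bound $\#\{\vec h : h_1 \cdots h_{d-1} = m\} \ll_{\epsilon} m^{\epsilon}$ and the classical estimate for $\sum_{m \le M} \min(1, \norm{\gamma m}^{-1})$, the assumption that $r$ is large forces this average to be $\ll N^{-c'}$ for some $c' > 0$; taking $2^{d-1}$-th roots gives $\abs{\EEE_{n<N} f(n) e(p(n))} \ll N^{-c_d}$.

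\textbf{Major arc.} Here $r$ is small, so $\beta$ lies within $N^{-1+o(1)}$ of a rational $A/R$ with $R$ at most a small power of $N$ (the threshold must be set so that both the part of the denominator coprime to $q$ and its $q$-power part, the latter absorbed into $q^L$, are small). The identity $\tfrac{A}{R} n^d \equiv \tfrac{A}{R}(n \bmod R)^d \pmod{1}$ shows $e(\tfrac{A}{R} n^d)$ is periodic of period $R$; expanding this periodic factor in a finite Fourier series reduces the problem to estimating $\EEE_{n<N} f(n) e(Q(n))$ for finitely many polynomials $Q$ of degree $\le d$ whose leading coefficient $\delta := \beta - A/R$ satisfies $\abs{\delta}\, N = N^{o(1)}$. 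I would then partition $\{0,\dots,N-1\}$ into blocks $\{q^T m + s : 0 \le s < q^T\}$ with $q^T \asymp N^{\zeta}$ for a suitable small $\zeta = \zeta(d, c_{d-1}) > 0$: on each block the factor $e(\delta n^d)$ differs from $e$ of a polynomial of degree $\le d-1$ in $s$ by $O(\abs{\delta}\, q^{Td}) = o(N^{-c_{d-1}})$, while $f(q^T m + s) = f(s)\, f(q^T m)$. Hence each block sum equals $f(q^T m)$ times a unimodular scalar times $\sum_{s < q^T} f(s)\, \chi(s)\, e(R_m(s))$, where $\chi$ has period at most $R q^T$ and $\deg R_m \le d-1$; expanding $\chi$ in Fourier series and applying the inductive hypothesis to $f$ at scale $q^T$ bounds the inner sum by $O\!\left(R^{O(1)} (q^T)^{1-c_{d-1}}\right)$, and summing trivially over the $\asymp N/q^T$ blocks gives $\abs{\EEE_{n<N} f(n) e(p(n))} \ll R^{O(1)} (q^T)^{-c_{d-1}} \ll N^{-c_d}$ once the parameters are chosen small relative to $c_{d-1}$.

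\textbf{Main obstacle.} The essential content, and the hard part, is the minor arc and specifically the simultaneous calibration of three competing constraints: the Weyl differencing must cost only a power-saving amount yet confine the differencing parameters to a polynomial scale $N^{\eta}$; the $q$-adic decoupling of $F_{\vec h}$ into an honestly periodic sequence must lose only a power of $N$ (this is the control of carry propagation when adding the shifts $b_i$); and the resulting linear exponential sums must be small for almost all $\vec h$, a Diophantine input whose range of validity must be matched \emph{exactly} to the denominator threshold separating the two arcs, including the $q$-power subtlety mentioned above. Making this dichotomy genuinely gapless — so that every leading coefficient $\beta$ is treated by one of the two arguments, with an exponent $c_d$ depending only on $d$ and $c_{d-1}$ — is the delicate bookkeeping; the reason the scheme works at all is the factorization $f(u + q^L v) = f(u) f(q^L v)$ together with the balancedness $\sum_i \varepsilon_i = 0$ of multiplicative derivatives, which is precisely what renders $F_{\vec h}$ periodic at a scale we control.
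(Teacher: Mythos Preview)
Your approach is correct and is essentially the general-$d$ version of the paper's \emph{alternative} proof given in the Appendix, which the paper writes out only for the case $p(n)=\alpha n^2$: one van der Corput step renders $\bar f(n)f(n+h)$ periodic modulo a power of $q$ (this is exactly your observation that $\sum_i \varepsilon_i = 0$ kills the $f(q^L v)$ factor), leading to a minor/major dichotomy on the leading coefficient; on the major arc one then descends in degree by passing to short $q$-adic blocks and invoking the inductive hypothesis. Two small comments: the factor $\sqrt{q^L}$ in your minor-arc bound is unnecessary --- factoring $n = u + q^L v$ directly gives $\abs{\EEE_{n<N} F_{\vec h}(n)e(\theta_{\vec h} n)} \le \abs{\EEE_{v<N/q^L} e(\theta_{\vec h} q^L v)} + O(N^{-\eta})$ with no Fourier expansion needed; and your periodic factor $\chi$ in the major arc is just the linear phase $e(ks/R)$ coming from the finite Fourier expansion, so it can be absorbed into $R_m$ rather than expanded again.

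The paper's \emph{main} proof, however, is genuinely different. It deduces Theorem~A from the stronger Theorem~B, $\norm{f}_{U^s[N]} \ll \norm{f}_{U^2[N]}^{\kappa_s}$, which is proved by a recursion on Gowers-type averages (Lemma~\ref{lem:recurrence}, Proposition~\ref{prop:E-bound=>A-bound}) combined with a structural result (Proposition~\ref{prop:rigidity}, Corollary~\ref{cor:pretentious=>linear}) showing, via the $99\%$ inverse theorem, that a $q$-multiplicative sequence with $U^s$ norm close to $1$ on an interval $[q^L]$ must agree with a \emph{linear} phase on most multiples of a bounded power of $q$. The payoff is that Theorem~B controls not just polynomial correlations but the full Gowers norms, which is what the generalised von Neumann theorem requires for the combinatorial applications (Theorem~D) and which your circle-method argument does not yield. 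Conversely, your route is more elementary and entirely self-contained, avoiding any inverse theorem.
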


In fact, we shall prove a stronger statement, which concerns an estimate for
the Gowers norms of $q$-multiplicative sequences.

\begin{alphatheorem}\label{thm:oscillating=>Gowers-uniform}
For any integer $s \geq 2$ there exists a constant $\kappa_s > 0$ such that the following holds. Let $f \colon \NN_0 \to \UU$ be a $q$-multiplicative sequence. Then 
	\begin{equation}\label{eq:osc-102}
		\norm{f}_{U^s[N]} \ll \norm{f}_{U^2[N]}^{\kappa_s}.
	\end{equation}	
\end{alphatheorem}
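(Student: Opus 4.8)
The plan is to argue by induction on $s$, the case $s=2$ being trivial with $\kappa_2=1$. Suppose \eqref{eq:osc-102} is known at order $s$, with exponent $\kappa_s$, for all $q$-multiplicative sequences and all lengths; we deduce it at order $s+1$, running a secondary induction on the length and — by the standard comparison of the interval Gowers norms with their cyclic counterparts on $\ZZ/q^L\ZZ$, which costs only bounded factors — assuming $N=q^L$. The starting point is the identity $\norm{f}_{U^{s+1}[N]}^{2^{s+1}}=\EEE_{|h|<N}\norm{\Delta_h f}_{U^s[N]}^{2^s}$ (with the obvious truncation of the interval), where $\Delta_h f(n)=f(n+h)\overline{f(n)}$, so it suffices to control $\norm{\Delta_h f}_{U^s[N]}$ for a typical shift $h$.

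The structural heart is to unfold $\Delta_h f$ using $q$-multiplicativity at a scale $k=k(h)$ chosen later. Writing $n=n'+q^k n''$ with $0\le n'<q^k$ one has $f(n)=f(n')\,g_k(n'')$, where $g_k(m):=f(q^k m)$ is again $q$-multiplicative; splitting $h=h'+q^k h''$ likewise, the addition $n\mapsto n+h$ produces a carry $\varepsilon=\varepsilon(n',h')\in\{-1,0,1\}$ out of the low block which, for fixed $h$, depends on $n'$ alone, and
\[
	\Delta_h f(n)\;=\;\Big[\,f\big((n'+h')\bmod q^k\big)\,\overline{f(n')}\,\Big]\cdot\Big[\,g_k\big(n''+h''+\varepsilon(n',h')\big)\,\overline{g_k(n'')}\,\Big].
\]
Grouping the residues $n'$ according to the (at most three) values of $\varepsilon$, we see that $\Delta_h f$ is a sum of $O(1)$ terms each of the form $A(n\bmod q^k)\cdot B(\lfloor n/q^k\rfloor)$, with $A$ a discrete derivative of the $q^k$-periodic sequence $n\mapsto f(n\bmod q^k)$ and $B=\Delta_{h''+\varepsilon}g_k$ a discrete derivative of the $q$-multiplicative sequence $g_k$. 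The cleanest case $q^k\mid h$ is exact and carry-free: $\Delta_{q^k h''}f=(\Delta_{h''}g_k)\circ\pi_k$ with $\pi_k(n)=\lfloor n/q^k\rfloor$.

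Next one estimates the $U^s[N]$ norm of each product. A tensor-type inequality bounds it by $\norm{A}_{U^s[\cdot]}\cdot\norm{B}_{U^s[\cdot]}$ up to controlled errors; the factor $\norm{B}_{U^s[\cdot]}$, being a derivative of a $q$-multiplicative sequence, is brought back into the $q$-multiplicative world by re-expressing the derivative through translates of $g_k$ and is then handled by the inductive hypothesis at order $s$; after averaging over $h''$ this feeds, via the secondary induction on the length, into $\norm{g_k}_{U^2[q^{L-k}]}$, while the factor involving $A$ contributes only at the smaller scale $q^k$ and is treated the same way (through $\norm{f}_{U^{s+1}[q^k]}$, again by the secondary induction, and hence through $\norm{f}_{U^2[q^k]}$). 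At this point $\norm{f}_{U^{s+1}[q^L]}$ has been bounded by $U^2$-norms of the pieces $n\mapsto f(n\bmod q^k)$ and $g_k$ on various scales, plus carry error terms, and it remains to show that these combine into a fixed power of $\norm{f}_{U^2[q^L]}$.

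The hard part will be precisely this final coupling, which is genuinely arithmetic rather than soft. By Delange's exact formula $\EEE_{n<q^M}f(n)e(-n\alpha)=\prod_{t<M}c_t(\alpha)$, with $c_t(\alpha)=\EEE_{a<q}f(aq^t)e(-aq^t\alpha)$, the norm $\norm{f}_{U^2[q^M]}$ is comparable (up to bounded powers) to $\sup_{\alpha}\prod_{t<M}|c_t(\alpha)|$. A naive block decomposition of $\Delta_h f$ produces only the trivial inequality $\sup_\alpha\prod_{t<M}|c_t(\alpha)|\le\big(\sup_\alpha\prod_{t<k}|c_t(\alpha)|\big)\big(\sup_\alpha\prod_{k\le t<M}|c_t(\alpha)|\big)$, i.e.\ a product of \emph{independent} suprema, which is far too weak: the two blocks of digits could each be a geometric progression $(e(aq^t\xi))_a$ for different frequencies $\xi$, making each factor large while $\norm{f}_{U^2[q^L]}$ is small (as the Thue--Morse example already shows — transported to the carry-free product group $(\ZZ/q)^L$ that sequence is a genuine character of $U^2$-norm $1$, so the entire obstruction lives in the carries). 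Thus the $q$-adic carries are exactly what force the frequencies on the different blocks to coincide, and the carry error terms isolated above must be shown to recover this matching; making this work forces a careful choice of $k=k(h)$, balancing the density of carries against the quality of the tensor splitting, together with a delicate bookkeeping of the $h$-dependence so that the scale-by-scale contributions of the pieces cannot all be large while $\norm{f}_{U^2[q^L]}$ is small. Once this coupling is in hand, assembling the estimates, optimising $\kappa_{s+1}$ in terms of $\kappa_s$, and transferring back to general $N$ is routine.
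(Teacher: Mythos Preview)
Your proposal has a genuine gap, and you have in fact located it yourself: the ``final coupling'' in the last paragraph is not a clean-up step but the entire content of the theorem, and the mechanism you propose for it does not work.

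There are two related problems. First, the inductive hypothesis at order $s$ applies only to $q$-multiplicative sequences, and neither $\Delta_h f$ nor the high-block factor $B=\Delta_{h''+\varepsilon}g_k$ is $q$-multiplicative. Your phrase ``re-expressing the derivative through translates of $g_k$'' does not explain how to recover a $q$-multiplicative object to which the hypothesis can be applied; and if instead you average $\norm{\Delta_{h''}g_k}_{U^s}^{2^s}$ over $h''$ you are back at $\norm{g_k}_{U^{s+1}}^{2^{s+1}}$, which is order $s+1$ again and gains nothing from the induction on $s$. Second, even granting the tensor splitting, the coupling of the block $U^2$-norms into a power of $\norm{f}_{U^2[q^L]}$ cannot be repaired by ``carry error terms'': in your own ``cleanest case'' $q^k\mid h$ there are no carries at all, yet the obstruction you describe (independent suprema on the two blocks) is present in full force. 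Carries are an obstruction to the splitting, not a tool that rescues it.

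The paper proceeds by a completely different route that sidesteps both issues. There is no induction on $s$. Instead one writes a recursion in the length $L$ for the $U^s$-norm (Lemma~\ref{lem:recurrence}), so that $\norm{f}_{U^s[q^L]}$ decays exponentially in the number of blocks $[K_i,K_{i+1})$ on which $\norm{S^{K_i}f}_{U^s[q^{L_i}]}\le 1-\varepsilon$ (Proposition~\ref{prop:E-bound=>A-bound}). The key new ingredient is a rigidity statement (Proposition~\ref{prop:rigidity} and Corollary~\ref{cor:pretentious=>linear}): combining the $99\%$ inverse theorem with $q$-multiplicativity, a block on which $\norm{\cdot}_{U^s}$ does \emph{not} drop must be close to a \emph{linear} phase $e(n\alpha_i+\beta_i)$ --- the higher-degree coefficients are forced to be essentially rational with small denominator. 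One then stitches the block frequencies $\alpha_i$ into a single global $\alpha$ (Proposition~\ref{prop:oscillating=>E-saving}), which shows that on exactly those blocks the $U^2$-norm does not drop either. Thus the $U^s$- and $U^2$-norms decay at comparable exponential rates, giving \eqref{eq:osc-102}. This ``either $U^s$ drops, or the block is linear so $U^2$ does not drop'' dichotomy is the idea your outline is missing.
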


	In particular (with the same notation as above) if $f$ is of Gelfond type then $\norm{f}_{U^s[N]} \ll N^{-c_s'}
$ and \eqref{eq:osc-02} holds for some constants $c_s',c_d > 0$. Moreover, if
\begin{equation}\label{eq:139:00}
	\EEE_{n<N} f(n) e(n\a) \to 0 \text{ as } N \to \infty \text{ for any } \a \in \RR.
\end{equation}
then the convergence is automatically uniform in $\alpha$ (see Lemma \ref{lem:U2-uniform}) whence $f$ is Gowers uniform of order $2$ (see Section \ref{sec:Background} for connection between Fourier transform and uniformity norms) and consequently
$$\norm{f}_{U^s[N]} \to 0 \text{ as } N \to \infty \text{ for any } s \geq 2.$$

	Let us briefly outline the proof of Theorem \ref{thm:oscillating=>Gowers-uniform}. Using similar ideas as in \cite{Konieczny-2017+}, we obtain recursive relations between Gowers norms of $f$ of order $s$ (and analogous expressions) on intervals of length $q^L$, $L \geq 0$ (Lemma \ref{lem:recurrence}, this can be compared with \eqref{eq:95:00}). A new phenomenon we have to deal with is that it is possible that at some steps of the recurrence the norm does not decrease. The key idea for dealing with this lack of cancellation is to apply an inverse theorem for Gowers uniformity norms to extract some polynomial behaviour of a suitable restriction of $f$. In fact, since $f$ is $q$-multiplicative, the only polynomials that it can resemble are the linear ones (Proposition \ref{prop:rigidity}). Ultimately, we are able to conclude from this that if at a given step in the recurrence the norm of order $s$ does not decrease significantly then nor does the norm of order $2$. Consequently, the norms of order $s$ and of order $2$ have comparable order of magnitude (Proposition \ref{prop:oscillating=>E-saving}). A potentially interesting feature of our argument is that we are able to avoid the powerful inverse theorem of Green, Tao and Ziegler \cite{GreenTaoZiegler-2012} and instead use the much simpler (both in proof and in formulation) $99\%$ variant.

As a consequence of Theorem \ref{thm:oscillating=>fully-oscillating}
and Corollary 2 in \cite{Fan2017c},
we get immediately the following weighted ergodic theorem, which states that Gelfond type
$q$-multiplicative sequences are good weights for polynomial ergodic theorem. An analogous result for more general $q$-multiplicative sequences and (linear) ergodic theorem was obtained in \cite{LesigneMauduitMosse-1994}, see also \cite{LesigneMauduit-1996}. Similar results for automatic sequences were obtained in \cite{EisnerKonieczny-2018}. For more background on weighted ergodic theorems and associated bibliography, we refer to \cite{Fan2017c} and \cite{EisnerKonieczny-2018}.

\begin{alphatheorem}\label{thm:WET}
Suppose that  $f \colon \NN_0 \to \UU$ is a $q$-multiplicative of Gelfond type (cf. \eqref{eq:osc-01a}). Then for any real polynomial $p$ with $p(\NN_0) \subset \NN_0$, for any measure-preserving dynamical system
$(X, \mathcal{B}, T, \mu)$ and any function $F \in L^r(\mu)$ (for some $r>1$) we have
\begin{equation}\label{eq:WET}
\lim_{N\to \infty} \EEE_{n <N} f(n) F(T^{p(n)} x) =0 \quad \text{ for $\mu$-a.a.\ } x \in X.
\end{equation}
\end{alphatheorem}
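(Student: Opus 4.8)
The plan is to read off Theorem \ref{thm:WET} from Theorem \ref{thm:oscillating=>fully-oscillating} together with the pointwise polynomial ergodic theorem for oscillating weights, namely Corollary 2 of \cite{Fan2017c}. The first step is to observe that Theorem \ref{thm:oscillating=>fully-oscillating} upgrades the hypothesis that $f$ is $q$-multiplicative of Gelfond type (i.e.\ \eqref{eq:osc-01a}) to the much stronger statement that $f$ is of Gelfond type of \emph{every} order: for each $d\geq 1$ there is $c_d>0$ with $\sup_{\deg p\leq d}\abs{\EEE_{n<N}f(n)e(p(n))}\ll N^{-c_d}$. The second step is to feed this into Corollary 2 of \cite{Fan2017c}, which says that any bounded sequence enjoying such power-saving control over polynomial exponential sums is a good weight for the polynomial pointwise ergodic theorem; that is, for every real polynomial $p$ with $p(\NN_0)\subseteq\NN_0$, every measure-preserving system $(X,\mathcal{B},T,\mu)$ and every $F\in L^r(\mu)$ with $r>1$, one has $\EEE_{n<N}f(n)F(T^{p(n)}x)\to 0$ for $\mu$-a.e.\ $x$. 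This is precisely \eqref{eq:WET}, so there is nothing further to prove.

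It is worth recalling where the content of that corollary lies, since none of it involves $f$ beyond the displayed decay. The mean version is immediate: by the spectral theorem for the Koopman operator $U\colon g\mapsto g\circ T$ on $L^2(\mu)$, writing $\sigma_F$ for the spectral measure of $F$, one has $\norm{\EEE_{n<N}f(n)F(T^{p(n)}\cdot)}_{L^2(\mu)}^2=\int_{\TT}\abs{\EEE_{n<N}f(n)e(p(n)\theta)}^2\,d\sigma_F(\theta)$, and since $n\mapsto p(n)\theta$ is a polynomial of degree $\leq\deg p$, the integrand is at most $\sup_{\deg q\leq\deg p}\abs{\EEE_{n<N}f(n)e(q(n))}^2\ll N^{-2c_{\deg p}}$, which tends to $0$. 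Passing to the lacunary sequence $N_k=2^k$, the summability of these mean-square bounds yields convergence to $0$ along $N_k$ for $\mu$-a.e.\ $x$; the oscillation of the weighted averages between consecutive $N_k$ is then controlled using the maximal and oscillation/variation inequalities attached to Bourgain's pointwise ergodic theorem for polynomial orbits --- which is also the reason one must take $r>1$, the statement failing on $L^1$ once $\deg p\geq 2$.

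Accordingly, the main obstacle is not located in this note at all: it is precisely Bourgain's polynomial pointwise ergodic theorem --- equivalently, the passage from mean to almost-everywhere convergence for non-linear $p$ --- and it is dealt with in \cite{Fan2017c}. The only thing that needs checking on our side is that Theorem \ref{thm:oscillating=>fully-oscillating} supplies exactly the decay hypothesis of Corollary 2 of \cite{Fan2017c}, which it manifestly does.
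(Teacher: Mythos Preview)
Your proposal is correct and follows exactly the route the paper takes: the paper does not give a separate proof of Theorem~\ref{thm:WET} but simply records it as an immediate consequence of Theorem~\ref{thm:oscillating=>fully-oscillating} together with Corollary~2 of \cite{Fan2017c}. Your additional paragraph sketching why that corollary holds (spectral theorem plus Bourgain's maximal inequalities) is a helpful gloss but goes beyond what the paper itself provides.
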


Using standard tools of higher order Fourier analysis we can derive several combinatorial corollaries of Theorem \ref{thm:oscillating=>Gowers-uniform}. In \cite[Cor.\ 2.3]{MullnerSpiegelhofer-2017}, the authors show that any finite sequence of $\pm 1$'s appears in the Thue--Morse sequence along an arithmetic progression. A more detailed counting statement for the Thue--Morse sequence is obtained in \cite{Konieczny-2017+}. Here we obtain a further generalisation.

\begin{alphatheorem}\label{thm:sum-of-digits-patterns}
	Let $\alpha \in \RR \setminus \QQ$ and $q \in \NN_{\geq 2}$. Then for any $k \in \NN$ and any (non-degenerate) intervals $I_0, \dots, I_{k-1} \subset [0,1)$ there exists an arithmetic progression $n,n+m,\dots,n+(k-1)m$ of length $k$ such that $\alpha s_q(n + j m) \bmod{1} \in I_j$ for all $0 \leq j < k$. 
	
	Similarly, let $Q \in \NN$ be coprime to $q-1$. Then for any $k$ and any $r_0,\dots, r_{k-1} \in [Q]$ there exists an arithmetic progression $n,n+m,\dots,n+(k-1)m$ of length $k$ such that $s_q(n + j m) \equiv r_j \bmod{Q}$ for all $0 \leq j < k$. 
	
    Moreover, in both of the above statements the number of such progressions contained in $[N]$ grows proportionally to $N^2$ as $N \to \infty$ and all other parameters are fixed.
\end{alphatheorem}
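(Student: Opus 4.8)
The plan is to derive both assertions from a single Fourier-analytic counting argument; I spell it out for the statement about intervals and indicate the (minor) modifications for the congruence statement at the end. For $\beta\in\RR$ write $f_\beta(n):=e(\beta s_q(n))$. Each $f_\beta$ is strongly $q$-multiplicative, since $s_q(aq^t)=a=s_q(a)$ for $0\le a<q$, and, using $s_q(n)\equiv n\pmod{q-1}$, a short computation with $q$-adic digits shows that $f_\beta$ is periodic if and only if $(q-1)\beta\in\ZZ$. Hence, by Proposition \ref{prop:strong-multi-dichotomy}, $f_{h\alpha}$ is of Gelfond type for every nonzero $h\in\ZZ$ when $\alpha\notin\QQ$ (as then $(q-1)h\alpha\notin\ZZ$), and $f_{b/Q}$ is of Gelfond type for every $b\in\ZZ$ with $Q\nmid b$ when $\gcd(Q,q-1)=1$ (as then $(q-1)b/Q\notin\ZZ$). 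For each such $\beta$ I will combine Lemma \ref{lem:U2-uniform} with Theorem \ref{thm:oscillating=>Gowers-uniform} to get $\norm{f_\beta}_{U^s[N]}\ll N^{-c}$ for every $s\ge2$ and some $c=c(\beta,s)>0$.

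Next I set up the count. With $k$ and the non-degenerate intervals $I_0,\dots,I_{k-1}\subset[0,1)$ fixed, put
\[
	\cN(N):=\sum_{\substack{n\ge0,\ m\ge1\\ n+(k-1)m<N}}\ \prod_{j=0}^{k-1}\mathbf{1}_{I_j}\!\bigl(\alpha s_q(n+jm)\bmod 1\bigr);
\]
it suffices to prove $\cN(N)\gg N^2$, since trivially $\cN(N)$ is at most the number of $k$-term progressions in $[N]$, which is $\bigl(\tfrac1{2(k-1)}+o(1)\bigr)N^2$. Given $\e>0$, I approximate each $\mathbf{1}_{I_j}$ from below on $\RR/\ZZ$ by a trigonometric polynomial $\psi_j(x)=\sum_{\abs h\le H}c_{j,h}e(hx)$ with $\psi_j\le\mathbf{1}_{I_j}$, $c_{j,0}=\abs{I_j}-O(\e)$, and $\sum_{\abs h\le H}\abs{c_{j,h}}=O_\e(1)$ (a standard Fej\'er/Vinogradov construction, $H=H(\e)$). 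Substituting $\psi_j$ for $\mathbf{1}_{I_j}$, expanding the product, and using $e(h\alpha s_q(\cdot))=f_{h\alpha}(\cdot)$, I obtain
\[
	\cN(N)\ \ge\ \sum_{\abs{h_0},\dots,\abs{h_{k-1}}\le H}\ \Bigl(\prod_{j}c_{j,h_j}\Bigr)\ \Lambda_N\bigl(f_{h_0\alpha},\dots,f_{h_{k-1}\alpha}\bigr),
\]
where $\Lambda_N(g_0,\dots,g_{k-1}):=\sum_{n,m}\prod_{j}g_j(n+jm)$, the sum ranging over the same set of pairs $(n,m)$.

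I then separate the diagonal term from the rest. The diagonal term $h_0=\dots=h_{k-1}=0$ equals $\bigl(\prod_j\abs{I_j}-O(\e)\bigr)$ times the number of $k$-term progressions in $[N]$, hence is $\gg N^2$ because the intervals are non-degenerate. In every other term some $h_{j_0}\ne0$; after embedding the range of $(n,m)$ into $\ZZ/\widetilde N$ for a prime $\widetilde N\asymp kN$ to avoid wrap-around, the generalised von Neumann inequality for the $k$-term progression operator gives, for this index $j_0$,
\[
	\abs{\Lambda_N\bigl(f_{h_0\alpha},\dots,f_{h_{k-1}\alpha}\bigr)}\ \ll\ N^2\,\norm{f_{h_{j_0}\alpha}}_{U^{k-1}[N]}.
\]
Since $h_{j_0}\ne0$, the sequence $f_{h_{j_0}\alpha}$ is of Gelfond type, so $\norm{f_{h_{j_0}\alpha}}_{U^{k-1}[N]}\ll N^{-c}$ with $c=c(H,k)>0$ (a minimum over the finitely many nonzero $h$ with $\abs h\le H$); thus each off-diagonal term is $o(N^2)$, and there are only $O_\e(1)$ of them. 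Altogether $\cN(N)\ge\bigl(\tfrac1{2(k-1)}\prod_j\abs{I_j}-O(\e)\bigr)N^2+o(N^2)$; letting $\e$ be small and then $N$ large yields $\cN(N)\gg N^2$, and running the same estimate with upper approximants $\psi_j^+\ge\mathbf{1}_{I_j}$ gives the matching upper bound, so in fact $\cN(N)=\bigl(\tfrac1{2(k-1)}\prod_j\abs{I_j}+o(1)\bigr)N^2$, the asserted quadratic growth.

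For the congruence statement I would run the same argument with the exact identity $\mathbf{1}[\,s_q(x)\equiv r_j\bmod Q\,]=\tfrac1Q\sum_{b=0}^{Q-1}e(-br_j/Q)\,f_{b/Q}(x)$ in place of the Fourier expansion of $\mathbf{1}_{I_j}$; now there is no $\e$-loss, the diagonal term $b_0=\dots=b_{k-1}=0$ contributes $\asymp Q^{-k}N^2$, and $\gcd(Q,q-1)=1$ is exactly what makes $f_{b/Q}$ non-periodic — hence, by Proposition \ref{prop:strong-multi-dichotomy}, of Gelfond type — for all $b\not\equiv0\pmod Q$, so the von Neumann estimate again disposes of the off-diagonal terms. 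The only steps that are not entirely mechanical are that the arithmetic hypotheses are used precisely to guarantee non-periodicity of the relevant $f_\beta$ (so that after Fourier expansion only the diagonal frequency survives), and that, since only finitely many frequencies occur, the Gelfond exponent may be taken uniform over them; granting Theorem \ref{thm:oscillating=>Gowers-uniform} and Proposition \ref{prop:strong-multi-dichotomy}, everything else is routine higher-order Fourier analysis.
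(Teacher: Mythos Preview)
Your argument is correct and follows essentially the same route as the paper's proof: Fourier-expand (or approximate by trigonometric polynomials) the indicator functions, use Proposition~\ref{prop:strong-multi-dichotomy} to see that each non-trivial character $f_\beta(n)=e(\beta s_q(n))$ is of Gelfond type, invoke Theorem~\ref{thm:oscillating=>Gowers-uniform} to obtain $U^s$-decay, and apply the generalised von Neumann theorem to kill the off-diagonal terms. Your write-up is in fact more explicit than the paper's sketch; the only quibble is that the reference to Lemma~\ref{lem:U2-uniform} is superfluous here, since Gelfond type already gives the quantitative $U^2$-bound directly via \eqref{eq:U2-vs-Fourier}.
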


While the derivation of the above statement follows from Theorem \ref{thm:oscillating=>Gowers-uniform} by standard methods, for the sake of completeness we include the proof in Section \ref{sec:Epilogue}.

We also point out that there are not many explicit examples of sequences known to be Gowers uniform of all orders. These examples include the Thue--Morse and the Rudin--Shapiro sequences, see \cite{Konieczny-2017+} for extended discussion. Hence, Theorem \ref{thm:oscillating=>Gowers-uniform} can be of interest as a new source of such examples.

\subsection*{Notation}
We use the standard asymptotic notation. For two quantities $X$ and $Y$ (possibly dependent on some parameters) we write $X = O(Y)$
or $X \ll Y$ if there exists an absolute constant $c > 0$ such that $\abs{X} \leq c Y$. If $c$ is allowed to depend on a quantity $Z$ then we write $X = O_Z(Y)$ or $X\ll_Z Y$. If additionally $\abs{c} \leq 1$ we write $X = \OO(Y)$.

We write $\NN$ for the set of positive integers $\{1,2,3,\dots\}$, $\NN_{\geq s} := \set{n \in \NN}{ n \geq s}$ and $\NN_0 := \NN \cup \{0\}$. We shall also denote by $\UU := \set{ z \in \CC}{ \abs{ z } = 1}$ the set of complex numbers of unit modulus.  We use the shorthand $[N] := \{0,1,\dots,N-1\}$ (which is slightly non-standard), $e(x) := e^{2 \pi i x}$ and $\norm{x} : = \norm{x}_{\RR/\ZZ} := \min_{n \in \ZZ} \abs{ n - x}$.

For a finite set $X$, we shall say that a statement $\Phi(x)$ is true for $\e$-almost every $x \in X$ if  the set of $x$ such that $\Phi(x)$ fails has size $\leq \e \abs{X}$. Likewise, we shall say that a statement $\Phi(n)$ is true for $\e$-almost every $n \in \NN_0$ if the set of $n$ for which $\Phi(n)$ fails has upper asymptotic density $\leq \e$. Here, the upper asymptotic density of a set $A \subset \NN_0$ is given by $\bar d(A) := \limsup_{N} \abs{ A \cap [N]}/N$ (lower asymptotic density is defined similarly, with $\liminf$ in place of $\limsup$).

For a function $f$ defined on a finite set $X$, we write $\mathbb{E}_{x \in X} f(x)$ or simply $\mathbb{E}_X f$ for $\frac{1}{|X|}\sum_{x \in X} f(x)$. Accordingly, for a condition $\Phi(x)$ which holds true for at least one $x \in X$, we write $\EE_{x \in X} \bra{ f(x) \middle| \Phi(x)}$ for the expression $\EE_{x \in X} (f(x) 1_{\Phi}(x)) / \PP_{x \in X}(\Phi(x))$ where $1_{\Phi}(x) = 1$ if $\Phi(x)$ is true and $1_\Phi(x) = 0$ if $\Phi(x)$ is false and of course $\PP_{x \in X} (\Phi(x)) = \EE_{x \in X} (1_\Phi(x))$.
In particular, if $\emptyset \neq Y \subset X$ then $\EE_{x \in X} \bra{f(x) \middle| x \in Y} = \EE_{x \in Y} f(x)$. 

\subsection*{Acknowledgements} The authors are grateful to Christian Mauduit for providing an inspiration for this line of research and to Tamar Ziegler and Jakub Byszewski for helpful discussions. The first author is supported by NSFC 11471132 and the second author is supported by ERC grant ErgComNum 682150. 

\makeatletter{}\section{Background}\label{sec:Background}

\subsection*{Oscillating sequences}
 A sequence of complex numbers $w =
(w_n)_{n\ge 0} \subset \UU$  is said to be {\em oscillating  of order
	$d$} ($d \ge  1$ being an integer) if
 for any real polynomial $p$ of degree less than or equal to $d$ we have
\begin{equation}\label{osc_d}
\lim_{N\to \infty} \frac{1}{N}\sum_{n=0}^{N-1} w_n e(p(n)) =0.
\end{equation}
A sequence is said to be {\em fully oscillating} if it is oscillating of all orders.
This terminology was introduced in \cite{Fan2016,Fan2017} (the notion of oscillating sequence of order $1$
was earlier introduced in \cite{FanJiang} and we simply spoke of oscillating sequences)
in order to extend the study of the M\"{o}bius--Liouville randomness law to the class of
oscillating sequences. Examples of oscillating sequences were constructed in \cite{Akiyama-Jiang,Elabdalaoui,Fan2016,Fan2017,Fan2017b}. Shi \cite{Shi} proved that 
for Lebesgue almost all $\beta >1$, the sequence $(e(\beta^n))$ 
is a Chowla sequence, which is a strictly stronger property than being oscillating of all orders. Indeed, any Chowla sequence is orthogonal to all topological dynamical systems of zero entropy.  

Following \cite{Fan2017}, we also introduce a property stronger than being an oscillating sequence. We shall say that $w$ is a {\em sequence of Gelfond type of order $d$} if there exists $ \gamma_d<1$ such that
	\begin{equation}\label{eq:Gelfond}
\sup_{\substack{ p \in \RR[x] \\ \deg p \leq d}} \abs{ \sum_{n = 0}^{N-1} w_n e(p(n))} \ll N^{\gamma_d}. 
\end{equation}
Note that \eqref{eq:Gelfond} can only hold for $\gamma_d \geq \frac{1}{2}$. The infimum of all the values of $\gamma_d$ such that \eqref{eq:Gelfond} holds is called the {\em Gelfond exponent of order $d$}. For the Thue--Morse sequence, which is $2$-multiplicative, Gelfond \cite{Gelfond-1968}
proved that the Gelfond exponent of order $1$ is equal to $\frac{\log 3}{\log 4}=0.792481\cdots$, and Gelfond property for all orders follows from results in \cite{Konieczny-2017+}, although the values of the Gelfond exponents remain unknown for orders $2$ and higher.

There are few results about the exact Gelfond exponents
of sequences. However, it is known that many $q$-multiplicative sequences
are of Gelfond type of order $1$. The interest of Theorem \ref{thm:oscillating=>fully-oscillating}  is that for $q$-multiplicative
sequences, the Gelfond property of order $1$ implies automatically 
the Gelfond property of higher orders. As proved in \cite{Fan2017},
once we prove the Gelfond property (\ref{eq:Gelfond}), we immediately get
a weighted ergodic theorem with weights $w$ along polynomials $p$ (see Theorem \ref{thm:WET}). 

\newcommand{\bla}{\tau}
Let us  look at the generalized Thue--Morse sequences
$(t_n^{(\bla)})$ defined by $t_n^{(\bla)} = e(\bla s_2(n))$, $\bla \in [0,1)$,
which are $2$-multiplicative. They define
trigonometric polynomials which can be written as trigonometric products
\begin{equation}\label{eq:TP}
     \left|\sum_{k=0}^{2^n -1} t_k^{(\bla)} e(k \alpha)\right|
      = 2^{n} \prod_{j=0}^{n-1}|\cos \pi (2^j \alpha +\bla)|.
\end{equation}
The dynamics $x \mapsto 2x \mod 1$ on $\mathbb{R}/\mathbb{Z}$ is then 
naturally involved.  The condition \eqref{eq:osc-01a} is satisfied by $(t_n^{(\bla)})$
for any $0<\bla<1$ ($\bla=0$ gives rise to $t_n^{(0)}=1$ and the condition \eqref{eq:osc-01a}
is not satisfied) because 
$$
     \max_x |\cos \pi (x+\bla) \cos \pi (2x +\bla)|<1.
$$
Mauduit, Rivat and S\'ark\"{o}zy \cite{MRS} showed the following better estimate
\begin{equation}\label{eq:MRS}
 \sup_{ \alpha}\prod_{j=0}^{n-1}\abs{\cos \pi (2^j \alpha +\bla)} \le D_{\bla} \exp\left(-\frac{\pi^2}{20}\fpa{\bla} n\right)
\end{equation}
where $D_{\bla}$ is a constant and $\|\bla\|$ denotes the distance from $c$ to the nearest integer. However, this estimate is not optimal. Recently it is proved in \cite{FSS2018} that the Gelfond exponent of $(t_n^{(\bla)})$ is equal to $1+ \log_2 \beta(\bla)$ where
$$
   \beta(\bla) = \sup_{ \mu } \int \log |\cos\pi (x+\bla)|d\mu(x)
$$
($\mu$ varying over all invariant measure under $x \mapsto 2x \bmod 1$)
and that the supremum is attained at a (unique) Sturmian measure. A method
is also provided in \cite{FSS2018} to compute $\beta(\bla)$ for a large set of $\bla$'s. For example,
 we get the exact value
\begin{equation}\label{eq: beta_2}
\beta(\bla) =\frac{1}{2}\log \left|\cos \pi \left(\frac{1}{3}+\bla\right)
\cos \pi \left(\frac{2}{3}+\bla\right)\right|
\end{equation}
for $\bla \in (0.43,0.57)$.
Similar formulas hold on other intervals of $\bla$ (see \cite{FSS2018} for details).

We point out that arithmetic 
problems involving the sum of dyadic digits depend on the estimates
of $L^p$-norms of the trigometric polynomials in (\ref{eq:TP}), see \cite{Gelfond-1968,MRS,FM1996a}.

\subsection*{Gowers uniformity norms}

The uniformity norms $\norm{\cdot}_{U^s[N]}$ were first introduced by T.\ Gowers in his work on a (higher order) Fourier-analytic proof of Szemer\'{e}di's Theorem \cite{Gowers-2001}. Here we just give the basic facts; for an extensive background we refer  to \cite{Green-book} or \cite{Tao-book}.

Let $N$ be an integer, $f \colon [N] \to \CC$ and $s \in \NN$. We denote by $\Pi(N)$ the set of $s$-dimensional parallelepipeds contained in $[N]$:
$$
\Pi(N) := \set{ \vec n \in \ZZ^{s+1} }{ 1 \omega \cdot \vec n \in [N] \text{ for all } \omega \in \{0,1\}^s}
$$
where we use the shorthand $x \cdot y = \sum_i x_i y_i$ and $1\omega = (1, \omega_1,\omega_2, \dots)$ so that $ 1 \omega \cdot \vec n = n_0 + \omega_1 n_1 + \dots + \omega_s n_s$. The {\em uniformity norm of $f$ of order $s$} is defined by
\begin{equation}
\label{eq:def-U^s}
\norm{f}_{U^s[N]}^{2^s} := \EEE_{\vec n \in \Pi(N)} \prod_{\omega \in \{0,1\}^{s}} \conj^{\abs{\omega}} f(1\omega \cdot \vec n),
\end{equation}
where $\conj$ is the complex conjugation. One can show that $\norm{\cdot}_{U^s[N]}$ is well defined (i.e.\ the right hand side of \eqref{eq:def-U^s} is always real and positive) and is indeed a norm for $s \geq 2$. (For $s = 1$, $\norm{f}_{U^1[N]} = \abs{ \EE_{n < N} f(n) }$ is a seminorm).  Additionally, we have the nesting property 
\begin{equation}
\label{eq:nesting}
	\norm{f}_{U^1[N]} \ll \norm{f}_{U^2[N]} \ll \norm{f}_{U^3[N]} \ll \dots, 
\end{equation}
as well as phase invariance
\begin{equation}
	\norm{\varphi f}_{U^s[N]} = \norm{f}_{U^s[N]},\ \text{where } \varphi(n) = e(p(n)),\ p \in \RR[x],\ \deg p < s.
\end{equation}

As a direct consequence of the above properties we have
$$
	\sup_{\substack{ p \in \RR[x] \\ \deg p < s}} \abs{ \EEE_{n < N} f(n) e(p(n))} \leq
		\sup_{\substack{ p \in \RR[x] \\ \deg p < s}} \norm{ f \cdot e \circ p }_{U^s[N]} =
	\norm{ f }_{U^s[N]}
$$
In particular, Theorem \ref{thm:oscillating=>fully-oscillating} follows immediately from Theorem \ref{thm:oscillating=>Gowers-uniform}. 

It is often useful to bound the uniformity norms in terms of other norms which are easier to study. The case $s=2$ is particularly simple, since $\norm{f}_{U^2[N]}$ is essentially equivalent to the supremum norm of the Fourier transform (see e.g.\ \cite[Prop.\ 2.2]{GreenTao-2008}):
\begin{equation}
\label{eq:U2-vs-Fourier}
\sup_{\alpha \in \RR} \abs{ \EEE_{n<N} f(n) e(-n\alpha) } \ll \norm{f}_{U^2[N]} \ll
\sup_{\alpha \in \RR} \abs{ \EEE_{n<N} f(n) e(-n\alpha) }^{1/2} \norm{f}_{\infty}^{1/2}.
\end{equation}
 For $p \geq 1$, we denote by $\norm{\cdot}_{L^p[N]}$ the $L^p$-norm corresponding to the normalised counting measure on $[N]$. It is shown in \cite{EisnerTao-2012} that for any $f \colon [N] \to \CC$ we have 
\begin{equation}
\label{eq:962}
\norm{f}_{U^s[N]} \ll \norm{f}_{L^p[N]},
\end{equation}
where $p = p(s) = 2^s/(s+1)$.

Because the functions we work with behave well with respect to the expansion in base $q$, it is more convenient to work with the norms $\norm{\cdot}_{U^s[N]}$ when $N$ is a power of $q$. The following lemma is standard.
\begin{lemma}\label{lem:wlog-N=q^L-A}
	For any $s \geq 2$ there exists a constant $\lambda > 0$ such that for any $\eta > 0$ the following holds. Let $f \colon \NN_0 \to \UU$ be any sequence and $N,M \geq 0$ be integers with $\eta M < N \leq M$. 
	Then we have $\norm{f}_{U^s[N]} \ll_{\eta} \norm{f}_{U^s[M]}^\lambda$.
\end{lemma}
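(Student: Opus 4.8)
The plan is to pass from $[N]$ to a larger power of $q$ and to compare uniformity norms on two intervals of comparable length. Fix $s \geq 2$. Given $N$, let $M := q^L$ where $L := \lceil \log_q N \rceil$, so that $N \leq M < qN$; thus $M/q < N \leq M$, which is the hypothesis of Lemma~\ref{lem:wlog-N=q^L-A} with $\eta = 1/q$. It therefore suffices to establish the conclusion of Lemma~\ref{lem:wlog-N=q^L-A}: for any $s \geq 2$ there is $\lambda = \lambda(s) > 0$ such that whenever $\eta M < N \leq M$ one has $\norm{f}_{U^s[N]} \ll_\eta \norm{f}_{U^s[M]}^\lambda$ for every $f \colon \NN_0 \to \UU$.

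The key step is a density-of-parallelepipeds argument comparing $\Pi(N)$ inside $\Pi(M)$. Write $\norm{f}_{U^s[N]}^{2^s} = \EEE_{\vec n \in \Pi(N)} \prod_{\omega} \conj^{|\omega|} f(1\omega \cdot \vec n)$, and similarly for $M$. Extend $f$ to $[M]$ arbitrarily (it is already defined on all of $\NN_0$). The cube-count $|\Pi(N)|$ is comparable to $N^{s+1}$ (up to constants depending only on $s$), and likewise $|\Pi(M)| \asymp M^{s+1} \asymp_\eta N^{s+1}$, so $|\Pi(N)| \gg_\eta |\Pi(M)|$; moreover each summand has modulus $1$ since $f$ is $\UU$-valued. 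Hence
\begin{equation*}
\norm{f}_{U^s[N]}^{2^s} = \frac{1}{|\Pi(N)|} \sum_{\vec n \in \Pi(N)} \prod_{\omega} \conj^{|\omega|} f(1\omega \cdot \vec n)
= \frac{|\Pi(M)|}{|\Pi(N)|} \; \EEE_{\vec n \in \Pi(M)} \Bigl( \prod_{\omega} \conj^{|\omega|} f(1\omega \cdot \vec n) \cdot 1_{\vec n \in \Pi(N)} \Bigr).
\end{equation*}
The cleanest way to finish is to invoke the generalized von Neumann / Gowers--Cauchy--Schwarz inequality: writing $1_{\vec n \in \Pi(N)}$ itself as a product over the vertices of a cube of cutoffs, one bounds the right-hand side by $O_\eta(1)$ times a product of Gowers norms of $f$ and of (dilates of) the indicator $1_{[N]}$, using that each $1_{[N]}$ has $U^s[M]$-norm bounded below by a constant depending only on $\eta$. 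After rearranging, this yields $\norm{f}_{U^s[N]}^{2^s} \ll_\eta \norm{f}_{U^s[M]}^{c}$ for some $c = c(s) > 0$, i.e.\ the desired inequality with $\lambda = c/2^s$.

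The main obstacle is handling the restriction $1_{\vec n \in \Pi(N)}$ cleanly: it is a conjunction of $2^s$ linear conditions $1\omega \cdot \vec n \in [N]$ indexed by the cube vertices, and one wants to bound its contribution by Gowers norms of the individual cutoffs rather than losing control. One can either carry out the Cauchy--Schwarz steps by hand (as in standard treatments, e.g.\ \cite{Green-book},\cite{Tao-book}), or — since this lemma is explicitly flagged as standard — simply cite that $[N]$ and $[M]$ of comparable length give comparable Gowers norms, which follows from the restriction estimate for parallelepiped systems. Either way the constants depend only on $s$ and $\eta$, and no use is made of $q$-multiplicativity, so the statement holds for arbitrary $\UU$-valued $f$ exactly as claimed. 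Combining with the reduction in the first paragraph completes the proof of the lemma.
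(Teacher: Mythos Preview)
Your reduction is correct as far as it goes: since $1_{\vec n\in\Pi(N)}=\prod_\omega 1_{[N]}(1\omega\cdot\vec n)$ and $|\Pi(N)|\gg_\eta|\Pi(M)|$, one indeed has $\norm{f}_{U^s[N]}^{2^s}\ll_\eta\norm{1_{[N]}\,f}_{U^s[M]}^{2^s}$. The gap is in the next step. The Gowers--Cauchy--Schwarz inequality bounds an inner product $\langle(g_\omega)_\omega\rangle_{U^s}$ by $\prod_\omega\norm{g_\omega}_{U^s}$, but here every vertex carries the \emph{same} function $g_\omega=f\cdot 1_{[N]}$, so GCS returns only the tautology $\norm{f\cdot 1_{[N]}}_{U^s[M]}^{2^s}\le\norm{f\cdot 1_{[N]}}_{U^s[M]}^{2^s}$. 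Neither GCS nor the generalised von Neumann theorem provides any mechanism for separating a pointwise product $f\cdot g$ at a single vertex into a factor of $\norm{f}_{U^s}$ times something controlled by $g$; the lower bound $\norm{1_{[N]}}_{U^s[M]}\gg_\eta 1$ is true but irrelevant, since there is no inequality it could be inserted into. Your fallback suggestion --- to cite that intervals of comparable length give comparable Gowers norms --- is exactly the statement being proved, so it is circular.

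The paper's proof supplies the missing mechanism via a Fourier decomposition of the cutoff. Set $\delta:=\norm{f}_{U^s[M]}$ (noting $\delta\gg M^{-1/2}$ by Parseval), $H:=\lfloor\delta M\rfloor$, and write $1_{[N]}=h_1+h_2$ on $\ZZ/M\ZZ$ with $h_1=1_{[N]}*1_{[H]}$. Then $\sum_k|\hat h_1(k)|\ll\delta^{-1/2}$ by Cauchy--Schwarz and Parseval, and since multiplication by a linear phase $e(kn/M)$ preserves $\norm{\cdot}_{U^s[M]}$, the triangle inequality gives $\norm{h_1 f}_{U^s[M]}\ll\delta^{-1/2}\cdot\delta=\delta^{1/2}$. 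The remainder $h_2$ is supported on $O(\delta M)$ points with $\norm{h_2}_\infty\ll 1$, so the Eisner--Tao bound $\norm{\cdot}_{U^s[M]}\ll\norm{\cdot}_{L^p[M]}$ with $p=2^s/(s+1)$ yields $\norm{h_2 f}_{U^s[M]}\ll\delta^{1/p}$. Combining, $\norm{f}_{U^s[N]}\ll_\eta\delta^{1/2}+\delta^{1/p}$, so one may take $\lambda=\min(1/2,1/p)$. The two ingredients your sketch lacks are phase-invariance (to handle the Fourier-smooth part of $1_{[N]}$) and the $L^p$ control of $U^s$ (to handle the small-support part); a bare Cauchy--Schwarz on parallelepipeds cannot replace them.
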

\begin{proof}
	Let $p = 2^s/(s+1)$, $\delta = \norm{f}_{U^s[M]}$. By the nesting property  \eqref{eq:nesting} and \eqref{eq:U2-vs-Fourier} we have
	$$\delta = \norm{f}_{U^s[M]} \gg \norm{f}_{U^2[M]} \gg \sup_{\alpha \in \RR} \abs{ \EEE_{n<M} f(n) e(-n\alpha) } \gg 1/\sqrt{M},$$
	where the last inequality follows from Parseval identity. Put $H = \floor{\delta M}$; we have $\delta M \leq H \ll \delta M$. We will identify $[M]$ with $\ZZ/M\ZZ$ and using Fourier analysis on the latter group. Put $h_1 = 1_{[N]} \ast 1_{[H]}$ and $h_2 = 1_{[N]} - h_1$. Then by a standard application of the Cauchy--Schwarz inequality and the Parseval identity we have
	$$
		h_1(n) = \sum_{k<M} \hat h_1(k) e(kn/M), \text{ with } \sum_{k < M} \abs{ \hat h_1(k) } \ll 1/\delta^{1/2}.
	$$ 
It follows from the construction that $\abs{\operatorname{supp} h_2} \ll \delta M$ and $\norm{h_2}_\infty \ll 1$, whence $\norm{h_2}_{L^p[M]} \ll \delta^{1/p}$.
Using the phase-invariance and the triangle inequality we conclude that 
	$$
	\norm{f}_{U^s[N]} \ll \norm{1_{[N]} f}_{U^s[M]} \leq \sum_{k<M} \abs{ \hat h_1(k)} \norm{f}_{U^s[M]} + \norm{h_2}_{L^p[M]} \ll \delta^{1/2} + \delta^{1/p}.
	$$
	This gives the claim with the (non-optimal) constant $\lambda = \min(1/p,1/2)$.
\end{proof}
For $q$-multiplicative sequences we have the following stronger version of the above lemma.
\begin{lemma}\label{lem:wlog-N=q^L}
	For any $s \geq 2$ there exists a constant $\lambda > 0$ such that for any $\eta > 0$ the following holds. Let $f \colon \NN_0 \to \UU$ be a $q$-multiplicative sequence and $N,M \geq 0$ be integers with $\eta M < N \leq M/\eta$. 
	Then we have $\norm{f}_{U^s[N]} \ll_{\eta} \norm{f}_{U^s[M]}^\lambda$.
\end{lemma}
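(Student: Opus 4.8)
The plan is to reduce, via Lemma~\ref{lem:wlog-N=q^L-A}, to the case where one interval is a bounded multiple of the other and both are powers of $q$, and then to exploit the self-similarity of $f$ at the scale $q^L$. If $N\le M$ the statement is exactly Lemma~\ref{lem:wlog-N=q^L-A}, so I may assume $M<N\le M/\eta$. Fixing $L$ with $q^L\le M<q^{L+1}$ and setting $A:=\ceil{N/q^L}$, one has $1\le A<q/\eta+1$ and $q^L\le N\le Aq^L$. Two applications of Lemma~\ref{lem:wlog-N=q^L-A} — once to the pair $(N,Aq^L)$ and once to $(q^L,M)$, each time comparing intervals of comparable length and so losing only a fixed power $\lambda_0=\lambda_0(s)\le\tfrac12$ of the norm (the constant from Lemma~\ref{lem:wlog-N=q^L-A}) — then reduce everything to the estimate
\begin{equation}\label{eq:wlogqL-key}
	\norm{f}_{U^s[Aq^L]}\ll_{q,s,\eta}\norm{f}_{U^s[q^L]},\qquad A=O_{q,\eta}(1),
\end{equation}
after which the three inequalities chain up to $\norm{f}_{U^s[N]}\ll_{q,s,\eta}\norm{f}_{U^s[M]}^{\lambda_0^2}$, i.e.\ the claim with $\lambda:=\lambda_0^2$, which does not depend on $\eta$ (and since $\lambda_0^2\le\lambda_0$ this exponent also covers the case $N\le M$). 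It is only in proving \eqref{eq:wlogqL-key} that $q$-multiplicativity is used, and note that here — unlike in Lemma~\ref{lem:wlog-N=q^L-A} — one passes to a \emph{longer} interval, so some genuine structural input is needed; the hard part is that $A$, though bounded, is not small, so the ``overshoot'' cannot simply be thrown away.

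To prove \eqref{eq:wlogqL-key} I would split off the top digits. For $n\in[Aq^L]$ write $n=aq^L+m$ with $a=\floor{n/q^L}\in[A]$ and $m=n\bmod q^L$; $q$-multiplicativity gives $f(n)=g^{\mathrm{per}}(n)\,G(a)$, where $g:=f|_{[q^L]}$, where $g^{\mathrm{per}}(n):=g(n\bmod q^L)$ is its $q^L$-periodic extension, and where $G(a):=f(aq^L)$. Since $G$ lives on the bounded set $[A]$, its Fourier expansion $G(a)=\sum_{k<A}\hat G(k)e(ka/A)$ has $\sum_k|\hat G(k)|\le A$. The key elementary identity
\[
	e\!\left(k\floor{n/q^L}/A\right)=e\!\left(\tfrac{kn}{Aq^L}\right)\overline{e\!\left(\tfrac{k(n\bmod q^L)}{Aq^L}\right)}
\]
(valid because $q^L\floor{n/q^L}=n-(n\bmod q^L)$) lets me rewrite, for $n\in[Aq^L]$,
\[
	f(n)=\sum_{k<A}\hat G(k)\,e\!\left(\tfrac{kn}{Aq^L}\right)h_k^{\mathrm{per}}(n),\qquad
	h_k^{\mathrm{per}}(n):=e\!\left(-\tfrac{k(n\bmod q^L)}{Aq^L}\right)g(n\bmod q^L),
\]
with each $h_k^{\mathrm{per}}$ still $q^L$-periodic. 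The point of exhibiting the ``staircase phase'' $e(k\floor{n/q^L}/A)$ as a genuine \emph{linear} phase times a periodic factor is that $\norm{\cdot}_{U^s[Aq^L]}$ is invariant under the linear phases $e(kn/(Aq^L))$ (as $s\ge2$); hence $\norm{f}_{U^s[Aq^L]}\le\sum_{k<A}|\hat G(k)|\,\norm{h_k^{\mathrm{per}}}_{U^s[Aq^L]}\le A\max_{k<A}\norm{h_k^{\mathrm{per}}}_{U^s[Aq^L]}$. Since $h_k^{\mathrm{per}}|_{[q^L]}$ is $g$ times a linear phase, $\norm{h_k^{\mathrm{per}}|_{[q^L]}}_{U^s[q^L]}=\norm{g}_{U^s[q^L]}=\norm{f}_{U^s[q^L]}$, so \eqref{eq:wlogqL-key} follows once I show that for every $q^L$-periodic $h\colon\ZZ\to\UU$,
\begin{equation}\label{eq:wlogqL-per}
	\norm{h}_{U^s[Aq^L]}\ll_s A\,\norm{h|_{[q^L]}}_{U^s[q^L]}.
\end{equation}

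Estimate \eqref{eq:wlogqL-per} I would prove by the disjoint-translates trick: $h|_{[Aq^L]}=\sum_{a<A}\bra{h\cdot 1_{[q^L]}}(\cdot-aq^L)$ is a sum of $A$ functions supported on the disjoint blocks $aq^L+[q^L]$, and I estimate each summand by the triangle inequality. A function supported on a length-$q^L$ subinterval of $[Aq^L]$ has $U^s[Aq^L]$-norm equal to $\bra{|\Pi(q^L)|/|\Pi(Aq^L)|}^{1/2^s}$ times its $U^s[q^L]$-norm — only parallelepipeds landing entirely inside the support contribute, and these correspond bijectively to $\Pi(q^L)$ by translating the base point — and the direct count $|\Pi(K)|=\sum_{\sum_i|n_i|<K}\bra{K-\sum_i|n_i|}\asymp_s K^{s+1}$ makes that ratio $\asymp_s A^{-(s+1)/2^s}$. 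Summing the $A$ translates gives $\norm{h}_{U^s[Aq^L]}\ll_s A^{1-(s+1)/2^s}\norm{h|_{[q^L]}}_{U^s[q^L]}\le A\,\norm{h|_{[q^L]}}_{U^s[q^L]}$, which is \eqref{eq:wlogqL-per}; combined with the previous paragraph this yields $\norm{f}_{U^s[Aq^L]}\ll_s A^{2}\norm{f}_{U^s[q^L]}\ll_{q,s,\eta}\norm{f}_{U^s[q^L]}$, i.e.\ \eqref{eq:wlogqL-key}. (If $q^L$ is so small that $|\Pi(K)|\asymp_s K^{s+1}$ fails, all intervals in sight have bounded length and the statement is trivial, e.g.\ because $\norm{f}_{U^s[q^L]}\gg_s(q^L)^{-1/2}$ by Parseval as in the proof of Lemma~\ref{lem:wlog-N=q^L-A}.)

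I expect the only real obstacle to be the organisation around the bounded-but-non-negligible factor $A$. Two tempting but wrong shortcuts must be avoided: expanding the staircase $\floor{n/q^L}$ in characters of $\ZZ/(Aq^L)\ZZ$ rather than of $[A]$ (which would cost a spurious $\log q^L$ that one cannot in general absorb into a power of $\norm{f}_{U^s[q^L]}$, since the latter need not be small), and trying to control the periodic extension $h^{\mathrm{per}}$ on $[Aq^L]$ by a ``localisation'' or ``the distribution is close to uniform'' argument (which only yields an error of size $\asymp A^{-c}$, not genuinely small when $A$ is a fixed constant). Both are circumvented by the two elementary devices above. It then remains only to check the boundary cases ($A=1$, $q^L$ bounded) and that all implied constants depend only on $q,s,\eta$ while the exponent $\lambda=\lambda_0^2$ depends only on $s$ (so that, $q$ being fixed throughout, this is the asserted $\ll_\eta$).
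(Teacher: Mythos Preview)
Your proof is correct and shares the paper's overall architecture: two applications of Lemma~\ref{lem:wlog-N=q^L-A} sandwiching an estimate $\norm{f}_{U^s[Aq^L]}\ll\norm{f}_{U^s[q^L]}$ for bounded $A$, the latter proved via $q$-multiplicativity and a block decomposition. The paper's middle step is considerably shorter, however: it simply observes that $\norm{f}_{U^s[q^{L+1}]}\le q\,\norm{f}_{U^s[q^L]}$---since on each block $P_a=aq^L+[q^L]$ the function $f$ equals the unimodular constant $f(aq^L)$ times a translate of $f|_{[q^L]}$, the triangle inequality together with exactly your support count gives the bound directly---and then iterates $C=O_\eta(1)$ times. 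Your Fourier expansion of $G(a)=f(aq^L)$ and the linear-times-periodic representation of the staircase phase are correct but unnecessary: the disjoint-translates estimate you isolate as~\eqref{eq:wlogqL-per} already applies to $f$ itself, precisely because each block carries $f$ as a unimodular constant times $f|_{[q^L]}$, so the detour through periodic $h_k^{\mathrm{per}}$ can be removed with no loss. Your cautions about avoiding $\log q^L$ losses and uniform-distribution errors are sound general principles, but here they guard against difficulties that the direct block decomposition never encounters.
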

\begin{proof}
Note that for any integer $L \geq 0$ we have $\norm{f}_{U^s[q^{L+1}]} \leq q \norm{f}_{U^s[q^{L}]}$. Indeed, letting $P_a = a q^{L} + [q^L]$ ($0 \leq a < q$), which form a partition of $[q^{L+1}]$, and using the triangle inequality and $q$-multiplicativity we find that
$$
	\norm{f}_{U^s[q^{L+1}]}
	\leq \sum_{a =0}^{q-1} \norm{ 1_{P_a} f}_{U^s[q^{L+1}]} \leq q \norm{f}_{U^s[q^L]}. 
$$
	Pick $L \geq 0$ and $C = O_\eta(1)$ such that $N \leq q^{L+C}$ and $M \geq q^L$. Then
$$
	\norm{f}_{U^s[N]} \ll_\eta \norm{f}_{U^s[q^{L+C}]}^{\lambda} \ll_{\eta} \norm{f}_{U^s[q^{L}]}^\lambda \ll_{\eta} \norm{f}_{U^s[M]}^{\lambda^2},
$$	
where $\lambda$ is the constant from Lemma \ref{lem:wlog-N=q^L-A}.
\end{proof}

The criterion for $U^2$-uniformity of a $q$-multiplicative sequence can be somewhat simplified, as shown by the following lemma.

\begin{lemma}\label{lem:U2-uniform}
Let $f \colon \NN_0 \to \UU$ be a $q$-multiplicative sequence and suppose that
\begin{equation}\label{eq:39:00}
	\EEE_{n<N} f(n) e(n\a) \to 0 \text{ as } N \to \infty \text{ for any } \a \in \RR.
\end{equation}
Then the convergence in \eqref{eq:39:00} is uniform with respect to $\alpha$, that is
\begin{equation}\label{eq:39:01}
	\sup_{\alpha \in \RR} \abs{ \EEE_{n<N} f(n) e(n\a)} \to 0 \text{ as } N \to \infty.
\end{equation}
\end{lemma}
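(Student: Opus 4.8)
The plan is to reduce to the case where $N$ is a power of $q$, use the exact digit‑product formula that $q$‑multiplicativity provides on the intervals $[q^L]$, and then finish with Dini's theorem on the compact group $\RR/\ZZ$.

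\textbf{Step 1: reduction to powers of $q$.} Fix $N$ and let $L$ be such that $q^L \le N < q^{L+1}$. If $N = q^L$, the bound we want is exactly the one supplied by Steps 2--3 below. Otherwise $q^L < N < q^{L+1}$, and applying the first inequality in \eqref{eq:U2-vs-Fourier}, then Lemma \ref{lem:wlog-N=q^L} with $s = 2$, $M = q^{L+1}$ and $\eta = 1/q$, and then the second inequality in \eqref{eq:U2-vs-Fourier} together with $\norm{f}_\infty = 1$, I get
\begin{align*}
	\sup_{\alpha} \abs{\EEE_{n<N} f(n) e(-n\alpha)}
	&\ll \norm{f}_{U^2[N]} \ll_q \norm{f}_{U^2[q^{L+1}]}^{\lambda} \\
	&\ll \left(\sup_{\alpha}\abs{\EEE_{n<q^{L+1}} f(n) e(-n\alpha)}\right)^{\lambda/2}.
\end{align*}
Since $\alpha \mapsto -\alpha$ permutes $\RR/\ZZ$ and $L \to \infty$ as $N \to \infty$, it therefore suffices to show that $\sup_{\alpha}\abs{\EEE_{n<q^L} f(n) e(n\alpha)} \to 0$ as $L \to \infty$.

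\textbf{Step 2: the digit‑product formula.} As recorded in the introduction, for a $q$‑multiplicative $f$ the identity $f(m+n) = f(m)f(n)$ holds whenever the base‑$q$ digit supports of $m$ and $n$ are disjoint, and $n \mapsto f(n) e(n\alpha)$ is again $q$‑multiplicative. Writing $n \in [q^L]$ as $n = \sum_{t=0}^{L-1} n^{(t)} q^t$ with $n^{(t)} \in \{0,\dots,q-1\}$, the summands $n^{(t)} q^t$ have pairwise disjoint digit supports, so $f(n) e(n\alpha) = \prod_{t=0}^{L-1} f(n^{(t)} q^t) e(n^{(t)} q^t \alpha)$; summing over the bijection $n \leftrightarrow (n^{(0)},\dots,n^{(L-1)})$ and dividing by $q^L$ gives the exact identity
\[
	\EEE_{n<q^L} f(n) e(n\alpha) = \prod_{t=0}^{L-1} P_t(\alpha), \qquad \text{where } P_t(\alpha) := \EEE_{a<q} f(a q^t) e(a q^t \alpha).
\]

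\textbf{Step 3: Dini's theorem.} Each $P_t$ is a finite trigonometric sum, hence continuous and $1$‑periodic, with $\abs{P_t(\alpha)} \le \EEE_{a<q} \abs{f(a q^t)} = 1$ for every $\alpha$. Consequently the functions $Q_L(\alpha) := \prod_{t=0}^{L-1} \abs{P_t(\alpha)} = \abs{\EEE_{n<q^L} f(n) e(n\alpha)}$ are continuous on the compact group $\RR/\ZZ$ and pointwise non‑increasing in $L$, since $Q_{L+1} = Q_L \abs{P_L} \le Q_L$. By the hypothesis \eqref{eq:39:00} specialised to $N = q^L$, the sequence $(Q_L)$ tends to $0$ pointwise, and the limit function $0$ is continuous; Dini's theorem then forces $\sup_{\alpha} Q_L(\alpha) \to 0$, which is exactly what remained to be shown. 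The only genuinely non‑formal point is the recognition that the naive compactness argument — choose $\alpha_k$ with $\abs{\EEE_{n<N_k} f(n) e(n\alpha_k)}$ bounded away from $0$, pass to a convergent subsequence $\alpha_k \to \alpha_*$, and contradict \eqref{eq:39:00} at $\alpha_*$ — does \emph{not} work, because the discrepancy $e\bigl(n(\alpha_k - \alpha_*)\bigr)$ is uncontrolled for $n$ as large as $N_k$; the structural substitute is that for $N = q^L$ the relevant Fourier coefficient is an honest finite product of continuous factors of modulus $\le 1$, so the partial products decrease monotonically in $L$ and Dini applies. Everything else (the reduction in Step 1 and the product formula in Step 2) is routine given the tools already assembled above.
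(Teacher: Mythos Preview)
Your proof is correct, but it takes a different route from the paper's. The paper argues by contradiction and compactness: assuming $\abs{\EE_{n<N_i} f(n) e(n\alpha_i)} \geq \e$ along some $N_i \to \infty$, one passes to a subsequence with $\alpha_i \to \alpha$, then for each \emph{fixed} $L$ partitions $[N_i)$ into blocks of length $q^L$ (using $q$-multiplicativity each block contributes $\abs{\EE_{n<q^L} f(n) e(n\alpha_i)}$) to deduce $\abs{\EE_{n<q^L} f(n) e(n\alpha_i)} \geq \e/2$; since this is a finite exponential sum it is continuous in $\alpha_i$, so letting $i\to\infty$ gives $\abs{\EE_{n<q^L} f(n) e(n\alpha)} \geq \e/2$ for all $L$, contradicting the hypothesis. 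Your approach instead exploits the exact product formula $\EE_{n<q^L} f(n)e(n\alpha) = \prod_{t<L} P_t(\alpha)$ to get a monotone sequence of continuous functions on $\RR/\ZZ$ and invokes Dini's theorem --- a clean and arguably more conceptual substitute. Two remarks: first, your Step~1 is heavier than necessary, since the same block-partitioning the paper uses gives directly $\abs{\EE_{n<N} f(n)e(n\alpha)} \leq \abs{\EE_{n<q^L} f(n)e(n\alpha)} + q^L/N$, avoiding any appeal to $U^2$-norms or Lemma~\ref{lem:wlog-N=q^L}. Second, your closing comment that the ``naive compactness argument does not work'' is misleading: the paper's proof \emph{is} precisely a compactness argument, and it succeeds because the partitioning into $q^L$-blocks reduces the problem to a sum over a fixed finite range where continuity in $\alpha$ is automatic.
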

\begin{proof}
Suppose that the claim was false and pick $\e \in (0,1/2)$ and sequences $N_i$, $\a_i$ ($i \geq 0$) such that $N_i \to \infty$ as $i \to \infty$ and 
\begin{equation}\label{eq:39:02}
	\abs{ \EEE_{n<N_i} f(n) e(n\a_i)} \geq \e
\end{equation}
for each $i \geq 0$. Passing to a subsequence we may additionally assume that $\a_i \to \a$ as $i \to \infty$ for some $\a \in \RR$. For any $L \geq 0$ and any $i$ large enough that $N_i > 10 q^{L}/\e$,  partitioning the average in \eqref{eq:39:02} into intervals of length $q^L$ and a leftover part of shorter length, and using the triangle inequality we conclude that
\begin{equation}\label{eq:39:03}
	\abs{ \EEE_{n< q^L} f(n) e(n\a_i)} \geq \e/2.
\end{equation}
Letting $i \to \infty$, we conclude that 
\begin{equation}\label{eq:39:04}
	\liminf_{L \to \infty} \abs{ \EEE_{n< q^L} f(n) e(n\a)} \geq \e/2,
\end{equation}
contradicting \eqref{eq:39:00}.
\end{proof}

We record a variant of the generalised von Neumann theorem (see \cite{Gowers-2001} for a special yet representative case and \cite{GreenTao-2010} for a considerably more general statement). A system of affine forms $\Psi = (\psi_1,\dots,\psi_t) \colon \ZZ^d \to \ZZ^t$ is said to have {\em finite complexity} if there is no non-trivial linear combination $\alpha \psi_i + \beta \psi_j$ ($\alpha,\beta \in \RR$ with $(\alpha,\beta) \neq (0,0)$ and $1 \leq i < j \leq t$) which is a constant function.
\begin{theorem}\label{thm:von-Neumann}
	Suppose that $\Psi = (\psi_1,\dots,\psi_t) \colon \ZZ^d \to \ZZ^t$ is a system of affine forms with finite complexity, and let $f_i \colon [N] \to \CC$ be functions bounded in absolute value by $1$ for $1 \leq i \leq t$. Then there exists $s \geq 1$, dependent only on $\Psi$, such that 
	$$
		\EEE_{\vec n \in [N]^d} \prod_{i = 1}^t (1_{[N]}f_i)( \psi_i( \vec n)) \ll \min_{1 \leq i \leq t} \norm{f_i}_{U^s[N]}.
	$$ 
\end{theorem}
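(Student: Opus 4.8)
The plan is to prove the estimate with $\norm{f_1}_{U^s[N]}$ on the right-hand side; the stated bound, with the minimum over $i$, then follows by relabelling the forms and taking $s$ to be the largest exponent that arises, a quantity depending only on $\Psi$. The argument is the classical iterated Cauchy--Schwarz of Gowers, in the form systematised by Green and Tao. The first step is to extract the only structural consequence of the \emph{finite complexity} hypothesis that will be used: since no pair admits a non-trivial combination $\alpha\psi_i+\beta\psi_j$ that is constant, the linear parts $\dot\psi_1,\dots,\dot\psi_t$ are pairwise linearly independent and each is nonzero. Hence, partitioning $\fp{2,\dots,t}$ into singleton classes, for every $i\in\fp{2,\dots,t}$ there is a direction $\vec u_i\in\ZZ^d$ with $\dot\psi_i(\vec u_i)=0$ but $\dot\psi_1(\vec u_i)\neq 0$ (such $\vec u_i$ exists precisely because $\dot\psi_1$ is not a scalar multiple of $\dot\psi_i$). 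In the language of Cauchy--Schwarz complexity this says the $1$-complexity of $\Psi$ is at most $t-2$, so it suffices to take $s=t-1$ (we assume $t\geq 2$, the only case of interest).

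Next I would pass to a convenient ambient group. Write $g_i:=1_{[N]}f_i$, so $\abs{g_i}\leq 1$; every argument $\psi_i(\vec n)$ with $\vec n\in[N]^d$ lies in a single interval of length $O_\Psi(N)$. Fix a prime $N'\asymp_\Psi N$ exceeding this length and regard the $\psi_i$ as affine maps $(\ZZ/N'\ZZ)^d\to\ZZ/N'\ZZ$, carrying the box cutoff $1_{[N]^d}(\vec n)$ along as a bounded weight (equivalently, expand it into a short linear combination of Fej\'er-type kernels, or run the whole argument on $[N]^d$ directly, as in Gowers's original treatment); working over a field keeps the directions $\vec u_i$ and the ensuing changes of variables clean. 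The core of the proof is a loop of $s$ applications of the Cauchy--Schwarz inequality, one per singleton class $\fp{i}$, $2\leq i\leq t$. At the step for $\fp{i}$ one introduces a fresh shift variable $h$ in a suitable range and uses $\vec u_i$: since $\dot\psi_i(\vec u_i)=0$, the argument of $g_i$ does not vary along $\vec u_i$, so a Cauchy--Schwarz organised around this direction kills the $g_i$-factor (it contributes at most $1$), while $\dot\psi_1(\vec u_i)\neq 0$ forces $\psi_1$ to pick up a genuine linear dependence on $h$; the factors from the not-yet-treated classes are merely duplicated (and conjugate-paired) and retained. It is convenient to phrase this loop through the Gowers box-norm (Gowers--Cauchy--Schwarz) inequality, which handles the bookkeeping uniformly. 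After all $s$ steps every $g_i$ with $i\geq 2$ has been eliminated, and what remains is --- up to a normalisation governed by the ranges of the $s$ shift variables and by the box cutoff, all comparable to powers of $N$, and up to rescaling the shifts by the nonzero integers $\dot\psi_1(\vec u_i)$ --- precisely the defining average for $\norm{g_1}_{U^s}$ on an interval of length $\asymp_\Psi N$. Finally, since $g_1=1_{[N]}f_1$, one has $\norm{g_1}_{U^s}\ll_\Psi\norm{f_1}_{U^s[N]}$ on any interval of length $\asymp_\Psi N$, by the routine comparison of Gowers norms on comparable intervals --- the same circle of ideas as in Lemma~\ref{lem:wlog-N=q^L-A} --- so combining gives $\abs{\EEE_{\vec n\in[N]^d}\prod_{i=1}^t g_i(\psi_i(\vec n))}\ll_\Psi\norm{f_1}_{U^s[N]}$; taking the most favourable index yields the theorem.

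The step I expect to be the main obstacle is the bookkeeping inside the Cauchy--Schwarz loop: one must track, at each stage, which forms survive in which of the exponentially many shifted copies of the configuration; verify that killing the $g_i$-factor at the $\fp{i}$-th step is legitimate --- this is precisely where the non-degeneracy $\dot\psi_1\notin\operatorname{span}\fp{\dot\psi_i}$ is consumed, and where a careless choice of shift directions would make the final expression collapse into a lower-order, hence useless, average --- and confirm that the expression emerging at the end is genuinely the Gowers inner product defining $\norm{g_1}_{U^s}$. The accompanying normalisation and cutoff issues (the ranges of the shift variables, the box indicator, and the passage between $[N]$, $[N']$, and the $O_\Psi(N)$-interval produced at the end) are routine and do not affect the final bound, since every relevant length is $\asymp_\Psi N$; as we need only the existence of some $s=s(\Psi)$, we are free to be wasteful throughout.
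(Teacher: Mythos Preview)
The paper does not prove Theorem~\ref{thm:von-Neumann}; it records it as background and refers to \cite{Gowers-2001} and \cite{GreenTao-2010} for the argument. So there is no paper proof to compare against. Your outline is the standard iterated Cauchy--Schwarz argument from those references, and the skeleton is correct: the finite-complexity hypothesis gives pairwise linearly independent (and nonzero) linear parts, whence for each $i\geq 2$ one can pick a direction $\vec u_i$ annihilating $\dot\psi_i$ but not $\dot\psi_1$; one Cauchy--Schwarz step per such direction removes the $g_i$-factors and leaves a $(t-1)$-fold multiplicative derivative of $g_1$, which is then compared to $\norm{f_1}_{U^{t-1}[N]}$.

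Two points worth tightening. First, after each Cauchy--Schwarz step the surviving forms are not merely duplicated but shifted; you should say explicitly why the \emph{next} step still works: the shifts change only the constant terms, so $\dot\psi_{i'}(\vec u_{i'})=0$ continues to annihilate every shifted copy of $\psi_{i'}$. Second, the expression you obtain at the end is a Gowers-type average of $g_1$ along the arithmetic shifts $h_i\,\dot\psi_1(\vec u_i)$; to recognise it as (comparable to) $\norm{g_1}_{U^{t-1}}$ you need the change of variables $h_i\mapsto h_i\,\dot\psi_1(\vec u_i)$ and the attendant range rescaling, which is where the nonvanishing $\dot\psi_1(\vec u_i)\neq 0$ is actually used. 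Neither is a gap in the argument, but both are places where a reader could lose the thread, and your write-up flags them only implicitly.
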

(Above, we denote by $1_{[N]}f_i$ the function which coincides with $f$ on $[N]$ and is identically $0$ elsewhere.) The optimal value of $s$  has been the subject of detailed investigation, see \cite{GowersWolf-2010} as well as \cite{Gowers-2001} and \cite{GreenTao-2010}, but its value will be largely irrelevant to us. 

It follows from the above discussion that polynomial phases $f(n) = e(p(n))$ with $p \in \RR[x]$ and $\deg p < s$ have maximal possible Gowers norms among $1$-bounded sequences: $\norm{f}_{U^s[N]} = 1$. Roughly speaking, an inverse theorem for Gowers norms asserts that, conversely, if $f \colon [N] \to \CC$ is bounded by $1$ and the norm $\norm{f}_{U^s[N]}$ is large then $f$ resembles a polynomial phase. Depending on how one interprets ``large'' in the previous statement, there are several regimes which may be considered.

In the ``100\% variant'' (following the terminology of Tao \cite{Tao-book}), we assume that $\norm{f}_{U^s[N]} = 1$. It is then a simple exercise to show that $f(n) = e(p(n))$ for all $n$, where $p \in \RR[x]$ and $\deg p < s$ as above. In principle, this (together with some continuity arguments) would suffice to establish our main result, but the reasoning is more elegant and intermediate steps yield stronger conclusions when we instead use the ``99\% variant''. The following result can be obtained by the methods developed in \cite{AlonKaufmanKrivelevichLitsynRon-2003}, for similar result see also \cite[Lem.\ 4.5]{TaoZiegler-2010}; see \cite[p.\ 1234]{GreenTaoZiegler-2012} or \cite[Prop.\ 1.5.1, Ex.\ 1.6.21]{Tao-book} for reference to (essentially) this statement.
\begin{theorem}\label{thm:inverse-99}
	There exists a function $h \colon \RR_{>0} \to \RR_{>0}$ such that $h(\e) \to 0$ as $\e \to 0$ and the following holds.
	
	Suppose that $\e>0$ and $f \colon [N] \to \CC$ is bounded by $1$ in absolute value and $\norm{f}_{U^s[N]} \geq 1 - \e$. Then there exists a polynomial phase $g(n) = e(p(n))$ where $p \in \RR[x]$ with $\deg p < s$ such that $\norm{f - g}_{L^1[N]} \leq h(\e)$.
\end{theorem}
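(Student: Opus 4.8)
The plan is to move from the uniformity norm to the \emph{phase} of $f$ and to prove a statement about approximate polynomials. First I would reduce to the case $\abs{f}\equiv 1$: a $1$-bounded $f$ with $\norm{f}_{U^s[N]}\ge 1-\e$ satisfies $\EE_{n<N}\bra{1-\abs{f(n)}}\to 0$ as $\e\to 0$ (a routine estimate from the definition of $\norm{\cdot}_{U^s[N]}$, since each summand is a $2^s$-fold product dominated by $\abs{f(n_0)}$), so replacing $f$ by $f/\abs f$ changes $f$ by $o(1)$ in $L^1[N]$ and the norm by the same. Write $f=e(\phi)$ with $\phi\colon[N]\to\RR/\ZZ$, and for $h\in\ZZ$ put $\partial_h g(n):=g(n+h)-g(n)$ (defined when $n,n+h\in[N]$). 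Expanding the definition of the norm and using $\abs f\equiv 1$ yields
\begin{equation}\label{eq:inv99-deriv}
	\norm{f}_{U^s[N]}^{2^s}=\EEE_{(n_0,\dots,n_s)\in\Pi(N)}e\bra{(-1)^s\,\partial_{n_1}\cdots\partial_{n_s}\phi(n_0)},
\end{equation}
an average of unimodular numbers which, being a power of a norm, equals a real number $\ge(1-\e)^{2^s}\ge 1-2^s\e$; since $1-\cos 2\pi t\gg\norm{t}_{\RR/\ZZ}^2$ this forces $\EEE_{(n_0,\dots,n_s)\in\Pi(N)}\norm{\partial_{n_1}\cdots\partial_{n_s}\phi(n_0)}_{\RR/\ZZ}^2\ll_s\e$. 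The theorem now reduces to the claim that there is $\tilde h(\delta)\to 0$ (as $\delta\to0$) such that, whenever this last average is $\le\delta$, there is $p\in\RR[x]$ with $\deg p<s$ and $\EE_{n<N}\norm{\phi(n)-p(n)}_{\RR/\ZZ}^2\le\tilde h(\delta)$: indeed $\EE_{n<N}\abs{f(n)-e(p(n))}\le 2\pi\,\EE_{n<N}\norm{\phi(n)-p(n)}_{\RR/\ZZ}\le 2\pi\,\tilde h(\delta)^{1/2}$, so $h(\e)=2\pi\,\tilde h(O_s(\e))^{1/2}+o(1)$ works.

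I would prove the claim by induction on $s$. For $s=1$ the hypothesis reparametrises to $\EE_{m,m'<N}\norm{\phi(m)-\phi(m')}_{\RR/\ZZ}^2\le\delta$, so for some $m'$ the constant polynomial $p\equiv\phi(m')$ works. For $s\ge 2$, splitting off the shift $n_1$ and applying Markov, for a $(1-O(\sqrt\delta))$-proportion of shifts $n_1$ with $\abs{n_1}\ll N$ the function $\psi_{n_1}:=\partial_{n_1}\phi$, defined on an interval of length $\gg N$, satisfies the order-$(s-1)$ hypothesis with $O(\sqrt\delta)$ in place of $\delta$; the inductive hypothesis then supplies, for each such $n_1$, a genuine polynomial $q_{n_1}\in\RR[x]$ of degree $<s-1$ with $\EE_n\norm{\psi_{n_1}(n)-q_{n_1}(n)}_{\RR/\ZZ}^2$ small.

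The remaining task --- assembling $(q_{n_1})_{n_1}$ into a single $p$ of degree $<s$ --- is the heart of the matter, and the step I expect to be the main obstacle. Since discrete derivatives commute, for most pairs of good shifts $n_1,n_1'$ one has $\partial_{n_1'}q_{n_1}\approx\partial_{n_1'}\psi_{n_1}=\partial_{n_1}\psi_{n_1'}\approx\partial_{n_1}q_{n_1'}$ in $L^2(\RR/\ZZ)$, so the real-coefficient polynomial $\partial_{n_1'}q_{n_1}-\partial_{n_1}q_{n_1'}$, of degree $<s-2$, has small $L^2(\RR/\ZZ)$-norm on $[N]$; by rigidity of bounded-degree polynomials --- expand in the basis $\binom{n}{j}$ rescaled to $[N]$, whose Gram matrix on $L^2[N]$ is bounded below --- this pins down the fractional parts of its coefficients. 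Pushing these relations through the coefficients of $q_{n_1}$ shows that, after discarding $o(1)$ of the shifts and adjusting each $q_{n_1}$ by an integer-valued polynomial, $n_1\mapsto q_{n_1}$ is itself (up to small $L^2$ error) polynomial and satisfies the cocycle relation $q_{n_1+n_1'}(n)\approx q_{n_1}(n+n_1')+q_{n_1'}(n)$, which allows one to integrate it to a polynomial $p\in\RR[x]$, $\deg p<s$, with $\partial_{n_1}p\approx q_{n_1}$ for most $n_1$. Then $\EE_{n,n_1}\norm{\partial_{n_1}(\phi-p)(n)}_{\RR/\ZZ}^2$ is small, so $\phi-p$ obeys the order-$1$ hypothesis; the case $s=1$ gives $\phi-p\approx c$ for a constant $c$, and absorbing $c$ into $p$ (this preserves $\deg p<s$) closes the induction and produces the required $\tilde h$.

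The difficulty is concentrated entirely in converting the \emph{approximate} commutativity of the $q_{n_1}$ into an \emph{exact} polynomial primitive --- essentially the local testability of Reed--Muller codes. An alternative that bypasses the induction is the extrapolation argument of \cite{AlonKaufmanKrivelevichLitsynRon-2003}: define $g=e(\gamma)$ by letting $\gamma(n)$ be the ``plurality value'' over $\vec y=(y_1,\dots,y_s)$ of $\sum_{\emptyset\neq T\subseteq\{1,\dots,s\}}(-1)^{\abs{T}+1}\phi\bra{n+\sum_{j\in T}y_j}$ (an expression that returns $\phi(n)$ exactly when $\phi$ is a polynomial of degree $<s$, and returns it up to $o(1)$ for most $\vec y$ in general, so that $\norm{f-g}_{L^1[N]}=o(1)$), and then prove that $g$ is an honest polynomial phase of degree $<s$ by overlaying independent random extrapolations at the $2^s$ vertices $n_0+\omega\cdot(h_1,\dots,h_s)$ of a parallelepiped and using the derivative bound behind \eqref{eq:inv99-deriv} together with a union bound to force the (deterministic) $s$-th discrete derivative of $\gamma$ to vanish identically.
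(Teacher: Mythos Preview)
The paper does not prove Theorem~\ref{thm:inverse-99}; it is quoted as a background result, with the proof deferred to \cite{AlonKaufmanKrivelevichLitsynRon-2003}, \cite[Lem.~4.5]{TaoZiegler-2010}, and \cite[Prop.~1.5.1, Ex.~1.6.21]{Tao-book}. So there is no ``paper's own proof'' to compare against beyond those references.

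Your second approach --- the self-corrector/plurality extrapolation in the spirit of \cite{AlonKaufmanKrivelevichLitsynRon-2003} --- is precisely the method the paper points to, and your sketch is essentially correct. Two technical points deserve a sentence each if you write this up: (i) ``plurality value'' has no literal meaning in $\RR/\ZZ$; in the $99\%$ regime one instead shows that the distribution of the extrapolated values is concentrated in a ball of radius $o(1)$ and takes $\gamma(n)$ to be (say) its barycentre or any point in that ball; (ii) once the double-sampling argument gives $\partial_{h_1}\cdots\partial_{h_s}\gamma\equiv 0\pmod 1$ for all admissible $h_i$, one still needs the (easy) fact that a function on $[N]$ with this property agrees $\bmod\ 1$ with a genuine real polynomial of degree $<s$ --- lift $\gamma(0),\dots,\gamma(s-1)$ to $\RR$ and propagate via the $h_i=1$ recurrence. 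Boundary effects in $[N]$ are routine.

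Your first, inductive, approach is in the right direction and matches the style of \cite{TaoZiegler-2010} and \cite{Tao-book}, but as written it is only an outline: the step you flag as ``the heart of the matter'' --- integrating the approximate cocycle $(q_{n_1})$ to a single polynomial $p$ --- is exactly where all the content lies, and your sketch (``pushing these relations through the coefficients \dots\ allows one to integrate'') does not yet constitute a proof. In particular, the passage from ``$\partial_{n_1'}q_{n_1}-\partial_{n_1}q_{n_1'}$ has small $L^2(\RR/\ZZ)$-norm'' to ``after adjusting by integer-valued polynomials, $n_1\mapsto q_{n_1}$ is approximately polynomial in $n_1$'' requires a genuine argument (essentially a $99\%$ structure theorem for approximate polynomials in the shift variable), and the subsequent ``integration'' from $\partial_{n_1}p\approx q_{n_1}$ to $p$ needs one more idea to pin down the constant of integration uniformly over shifts. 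None of this is wrong, but if you pursue this route you should expect the bulk of the write-up to live in that paragraph; the AKKLR route is shorter and is what the literature (and this paper) actually invokes.
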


Lastly, we mention the ``1\% variant'' of the inverse theorem, due to Green, Tao and Ziegler \cite{GreenTaoZiegler-2012} (see also \cite{BergelsonTaoZiegler-2010}, \cite{TaoZiegler-2010}, \cite{TaoZiegler-2012}). It asserts, in rough terms, that if $\norm{f}_{U^s[N]} \geq \e$ for some $\e > 0$ then $f$ correlates with a $(s-1)$-step nilsequence of bounded complexity. Because this result is beside the scope of the present paper, we do not go into more detail, and refrain even from precisely defining the terms appearing in the previous statement. 

\makeatletter{}\section{Recursive relations}

\begin{convention*} Throughout this and subsequent sections, we treat the basis $q \geq 2$ and the order of the Gowers norm $s \geq 2$ as fixed, and we allow all constants --- implicit and explicit alike --- to depend on both $q$ and $s$ without further mention. In particular, wherever we write $O(\cdot)$, we really mean $O_{q,s}(\cdot)$.
\end{convention*}

 For a function $f \colon \NN_0 \to \CC$, $\bb r \in \NN_0^{2^s}$ and $L \in \NN_0$ we define
$$A(f,\bb r,L) := \EEE_{\vec n \in \Pi(q^L)} \prod_{\omega \in \{0,1\}^{s}} \conj^{\abs{\omega}} f(1\omega \cdot \vec n + r_\omega),$$
where $\conj$ is the complex conjugation. Hence, $A(f,\bb 0,L) = \norm{f}_{U^s[q^L]}^{2^s}$. Our goal is, roughly speaking, to show that $A(f,\bb r,L)$ are small if $f$ is $q$-multiplicative and oscillating. Recall that by Lemma \ref{lem:wlog-N=q^L}, in order to prove Theorem \ref{thm:oscillating=>Gowers-uniform} it is enough to deal with the case $N = q^L$.

For any function $f \colon \NN_0 \to \CC$ and $l \geq 0$, denote by $S^l f$ the $q$-multiplicative function given by $S^lf(n) = f(q^ln)$. The following approximate recurrence expresses $A(f,\bb r, L)$ in terms of shorter averages $A(S^lf, \bb r', L-l)$. To make notation slightly less burdensome, we shall use Iverson bracket notation, where for a statement $\phi$, $\ifbra{\phi} = 1$ if $\phi$ is true and $\ifbra{\phi} = 0$ if $\phi$ is false. 

\begin{lemma}\label{lem:recurrence}
	Let $f \colon \NN_0 \to \UU$ be $q$-multiplicative, $\bb r \in \NN_0^{2^s}$ and $L,l \in \NN$ with $l \leq L$. Then
	\begin{equation}\label{eq:50:00}
	A(f,\bb r, L) = \sum_{\bb r' \in \NN_0^{2^s}} A(S^l f, \bb r', L-l) W^{(l)}(f,\bb r, \bb r') + O(q^{-(L-l)}),
	\end{equation}
	where only finitely many terms in the sum are non-zero and
	\begin{equation}\label{eq:50:02}
	 W^{(l)}(f,\bb r, \bb r') 
	 = \EEE_{\vec e \in [q^l]^{s+1}}
	\prod_{\omega \in \{0,1\}^s} \conj^{\abs{\omega}}f(1\omega \cdot \vec e \bmod{q^l}) \ifbra{ \delta^{(l)}(\bb r, \vec e) = \bb r'},
	\end{equation}
	with $\delta^{(l)}(\bb r, \vec e) \in \NN_0^{2^s}$ given by
	\begin{equation}\label{eq:50:01}
	\delta^{(l)}(\bb r, \vec e)_{\omega} = \floor{\frac{1\omega \cdot \vec e + r_\omega}{q^l}}.
	\end{equation}
\end{lemma}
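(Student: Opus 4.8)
The plan is to expand $A(f,\bb r,L)$ by splitting each coordinate $n_i$ of $\vec n\in\Pi(q^L)$ according to its residue modulo $q^l$, writing $n_i=e_i+q^l m_i$ with $e_i\in[q^l]$ and $m_i\in\NN_0$. Then for each $\omega\in\{0,1\}^s$ the argument $1\omega\cdot\vec n+r_\omega$ becomes $(1\omega\cdot\vec e+r_\omega)+q^l(1\omega\cdot\vec m)$. The key arithmetic observation is that, writing $1\omega\cdot\vec e+r_\omega = \rho_\omega + q^l\cdot\delta^{(l)}(\bb r,\vec e)_\omega$ with $\rho_\omega := (1\omega\cdot\vec e+r_\omega)\bmod q^l\in[q^l]$ and $\delta^{(l)}(\bb r,\vec e)_\omega$ the carry as in \eqref{eq:50:01}, the $q$-multiplicativity of $f$ gives
\[
 f(1\omega\cdot\vec n+r_\omega) = f\bigl(\rho_\omega + q^l(1\omega\cdot\vec m + \delta^{(l)}(\bb r,\vec e)_\omega)\bigr) = f(\rho_\omega)\,f\bigl(q^l(1\omega\cdot\vec m+\delta^{(l)}(\bb r,\vec e)_\omega)\bigr),
\]
provided the low-order block $\rho_\omega$ (which lives in $[q^l]$) and the high-order block are supported on disjoint digit positions — which holds precisely because $\rho_\omega<q^l$ and the second factor is a multiple of $q^l$. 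Recognising $f(q^l\cdot)=S^lf(\cdot)$, this factors the summand into a part depending only on $\vec e$ (which assembles into the weight $W^{(l)}$) and a part depending only on $\vec m$ and the carry vector $\bb r'=\delta^{(l)}(\bb r,\vec e)$ (which assembles into $A(S^lf,\bb r',L-l)$).

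Next I would deal with the mismatch between the index set actually obtained and $\Pi(q^{L-l})$. After the substitution the pair $(\vec e,\vec m)$ ranges over the set where $1\omega\cdot\vec e+q^l(1\omega\cdot\vec m)\in[q^L]$ for all $\omega$, and one wants instead $\vec e\in[q^l]^{s+1}$ free and $\vec m\in\Pi(q^{L-l})$. The point is that for fixed $\vec e\in[q^l]^{s+1}$ the constraint on $\vec m$ is $1\omega\cdot\vec m\in[q^{L-l}]$ up to an additive shift by $0$ or $1$ coming from whether $1\omega\cdot\vec e$ produces a carry into position $l$; this changes the admissible range of each $1\omega\cdot\vec m$ by at most $1$ out of $q^{L-l}$ values, and the counting-measure normalisations $|\Pi(q^L)|\asymp q^{L(s+1)}$, $|\Pi(q^{L-l})|\asymp q^{(L-l)(s+1)}$, $|[q^l]^{s+1}|=q^{l(s+1)}$ are compatible, so the total discrepancy is $O(q^{-(L-l)})$ after using $|f|\le 1$ to bound the symmetric difference. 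This is where the error term in \eqref{eq:50:00} comes from, and the bookkeeping of the normalising factors and of the $2^s$ simultaneous near-constraints is the step I expect to be the main obstacle — not because it is deep, but because getting the shift-by-at-most-one statement uniformly over all $\omega$ and tracking which $\vec e$ cause carries requires care.

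Finally I would observe that for fixed $\vec e$ the carry vector $\delta^{(l)}(\bb r,\vec e)$ takes values in a bounded set: each coordinate is at most $\lfloor((s+1)(q^l-1)+r_\omega)/q^l\rfloor \le s+1+r_\omega/q^l$, so as $\bb r$ is fixed only finitely many $\bb r'\in\NN_0^{2^s}$ arise, justifying the claim that the sum over $\bb r'$ has finitely many nonzero terms. Collecting the $\vec e$-dependent factors and grouping by the value $\bb r'=\delta^{(l)}(\bb r,\vec e)$ yields exactly $W^{(l)}(f,\bb r,\bb r')$ as in \eqref{eq:50:02} (the complex conjugation pattern $\conj^{|\omega|}$ simply distributes over the factorisation since conjugation is multiplicative), and multiplying by the corresponding $A(S^lf,\bb r',L-l)$ and summing gives \eqref{eq:50:00}. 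I would present the digit-disjointness factorisation and the carry computation explicitly, and treat the index-set replacement and the resulting $O(q^{-(L-l)})$ term with a short Cauchy–Schwarz-free direct count.
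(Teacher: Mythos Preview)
Your approach is correct and is exactly the argument the paper has in mind (the paper itself only cites \cite{Konieczny-2017+} and gives no details); the digit-splitting $n_i=e_i+q^l m_i$, the $q$-multiplicative factorisation into a low block and a $S^l f$-block carrying the $\bb r'$, and the $O(q^{-(L-l)})$ accounting for the boundary mismatch between the true range of $\vec m$ and $\Pi(q^{L-l})$ are precisely the intended steps.

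One point deserves a flag. Your factorisation produces the low-block factor $f\bigl((1\omega\cdot\vec e + r_\omega)\bmod q^l\bigr)$, whereas \eqref{eq:50:02} as written has $f(1\omega\cdot\vec e \bmod q^l)$, with $\bb r$ entering only through the carry indicator. These disagree when $\bb r\neq\bb 0$, so strictly speaking you do not recover \eqref{eq:50:02} verbatim; you recover the (arguably more natural) variant with $r_\omega$ inside the argument. This is harmless for everything downstream in the paper --- the only places $W^{(l)}$ is used are \eqref{eq:50:90}, where $\bb r=\bb 0$ so the two formulas coincide, and \eqref{eq:150:57}, which only uses the triangle inequality and holds for either version --- but you should state which weight you actually obtain rather than claim an exact match with \eqref{eq:50:02}. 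A second, smaller inaccuracy: the shift in the admissible range of $1\omega\cdot\vec m$ is by at most $\lfloor(1\omega\cdot\vec e)/q^l\rfloor\le s$, not just ``$0$ or $1$''; this does not affect the $O(q^{-(L-l)})$ bound but is worth stating correctly.
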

\begin{proof}
	This follows by essentially the same argument as \cite[Lemma 2.1]{Konieczny-2017+} (see also  \cite[Lemma 3.1]{Konieczny-2017+}).
\end{proof}

\begin{remark*}
	We will be interested only in $\bb r$ such that $0 \leq r_\omega \leq s$ for all $\omega \in \{0,1\}^s$. Note that for such $\bb r$, if $\delta^{(l)}(\bb r, \vec e) = \bb r'$ for some  $l \geq 0$ and $\vec e \in [q^l]^{s+1}$ then also 
	$$r_\omega' \leq \floor{ \frac{(s+1)(q^l-1) + s}{q^l} } = s \text{ for all $\omega \in \{0,1\}^s$. }$$
\end{remark*}

The coefficients $W^{(l)}(f,\bb r, \bb r')$ given by \eqref{eq:50:02} are reminiscent of Gowers uniformity norms. We will be especially interested in $W^{(l)}(f,\bb 0, \bb 0')$, which can more simply be written as
	\begin{equation}\label{eq:50:90}
\PPP_{\vec e \in [q^l]^{s+1}} \bra{ \sum_{i=0}^s e_i < q^l } \EEE_{\vec e \in [q^l]^{s+1}}
	\bra{\prod_{\omega \in \{0,1\}^s} \conj^{\abs{\omega}}f(1\omega \cdot \vec e ) \middle| \sum_{i=0}^s e_i < q^l }.
	\end{equation}
Note that the average in \eqref{eq:50:90} is not quite the same as the average defining $\norm{f}_{U^s[q^l]}^{2^s}$ in \eqref{eq:def-U^s}.
Indeed, while the two averages contain the same collection of terms, there is a slight difference caused by the existence of degenerate cubes. For instance, if $s = 2$ then the terms $f(0)\bar f(1)^2 f(2)$ and $f(0)^2 \bar f(0)^2$ appear with the same multiplicity in the average in \eqref{eq:50:90}, but with different multiplicities in the average in \eqref{eq:def-U^s}. In the following lemma we circumvent this technical issue.

\begin{lemma}\label{lem:E-to-Gowers} 
	Let $N \geq 0$, $\e > 0$ and $f \colon \NN_0 \to \UU$. Then the following conditions are equivalent:
\begin{enumerate}
\item\label{cond:00:A} $\norm{f}_{U^s[N]} \geq 1 - O(\e)$,
\item\label{cond:00:B} $ \Re {\EEE_{\vec e \in [N]^{s+1}}
	\bra{\prod_{\omega \in \{0,1\}^s} \conj^{\abs{\omega}} f(1\omega \cdot \vec e) \middle| \sum_{i=0}^{s} e_i < N}}
	 \geq 1 - O(\e)$.
\end{enumerate}
\end{lemma}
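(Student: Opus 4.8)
The plan is to reduce each of conditions \eqref{cond:00:A} and \eqref{cond:00:B} to a statement about the single non-negative quantity $1-\Re\Phi(\vec n)$, where $\Phi(\vec n):=\prod_{\omega\in\{0,1\}^{s}}\conj^{\abs{\omega}}f(1\omega\cdot\vec n)$, averaged over two closely related index sets, and then to compare those two averages directly. Note that $\abs{\Phi(\vec n)}=1$ for every $\vec n$, since $\abs{f}\equiv 1$.

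First I would record the elementary translations. Since $\norm{f}_{U^{s}[N]}^{2^{s}}=\EEE_{\vec n\in\Pi(N)}\Phi(\vec n)$ is real and lies in $[0,1]$ (it is non-negative by the basic properties of the uniformity norm and is bounded above by $\EEE\abs{\Phi}=1$), condition \eqref{cond:00:A} is equivalent, up to adjusting the implied constants via $(1-x)^{\theta}\ge 1-x$ for $\theta\in(0,1]$, to $\EEE_{\vec n\in\Pi(N)}\bra{1-\Re\Phi(\vec n)}=O(\e)$. Likewise, unwinding the conditional-expectation notation, the average in \eqref{cond:00:B} equals $\EEE_{\vec e\in\Delta}\Phi(\vec e)$, where $\Delta:=\set{\vec e\in\NN_{0}^{s+1}}{\sum_{i=0}^{s}e_{i}<N}$; and since $\Re\Phi\le 1$ pointwise, \eqref{cond:00:B} is equivalent to $\EEE_{\vec e\in\Delta}\bra{1-\Re\Phi(\vec e)}=O(\e)$. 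It therefore suffices to show that $\EEE_{\vec n\in\Pi(N)}\bra{1-\Re\Phi(\vec n)}$ and $\EEE_{\vec e\in\Delta}\bra{1-\Re\Phi(\vec e)}$ agree up to an $O(1)$ multiplicative factor.

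The comparison rests on two observations about the geometry of $\Pi(N)$. The first is that $\Delta=\Pi(N)\cap\NN_{0}^{s+1}$: for $\vec n$ with non-negative coordinates, all the defining constraints $1\omega\cdot\vec n\in[N]$ follow from the single one with $\omega=(1,\dots,1)$, which reads $\sum_{i=0}^{s}n_{i}<N$. In particular $\Delta\subseteq\Pi(N)$, so $\sum_{\vec e\in\Delta}\bra{1-\Re\Phi}\le\sum_{\vec n\in\Pi(N)}\bra{1-\Re\Phi}$. The second is that for each $S\subseteq\{1,\dots,s\}$ the ``sign reflection'' $R_{S}$ which sends $n_{i}\mapsto-n_{i}$ for $i\in S$, sends $n_{0}\mapsto n_{0}+\sum_{i\in S}n_{i}$, and fixes $n_{i}$ for $i\ge 1$, $i\notin S$, is a measure-preserving involution of $\Pi(N)$: a direct computation gives $1\omega\cdot R_{S}(\vec n)=1(\omega\oplus 1_{S})\cdot\vec n$ (where $1_{S}\in\{0,1\}^{s}$ is the indicator of $S$ and $\oplus$ is coordinatewise addition mod $2$), so $R_{S}$ merely permutes the defining constraints of $\Pi(N)$ among themselves; and the same identity, combined with $\abs{\omega\oplus 1_{S}}\equiv\abs{\omega}+\abs{S}\pmod 2$, shows that $\Phi\circ R_{S}$ equals $\Phi$ if $\abs{S}$ is even and $\overline{\Phi}$ if $\abs{S}$ is odd, so in either case $\Re(\Phi\circ R_{S})=\Re\Phi$. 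Sorting $\vec n\in\Pi(N)$ by the sign pattern of $(n_{1},\dots,n_{s})$ and applying the corresponding reflection shows $\Pi(N)=\bigcup_{S\subseteq\{1,\dots,s\}}R_{S}(\Delta)$, whence $\sum_{\vec n\in\Pi(N)}\bra{1-\Re\Phi}\le\sum_{S}\sum_{\vec n\in R_{S}(\Delta)}\bra{1-\Re\Phi}=2^{s}\sum_{\vec e\in\Delta}\bra{1-\Re\Phi}$.

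Combining the two sum-inequalities with the routine volume bounds $\abs{\Delta}\asymp N^{s+1}\asymp\abs{\Pi(N)}$ (for $N$ large; small $N$ are absorbed by the implied constants) yields $\EEE_{\vec n\in\Pi(N)}\bra{1-\Re\Phi}\asymp\EEE_{\vec e\in\Delta}\bra{1-\Re\Phi}$, which is exactly what was needed. I expect the only genuinely computational step to be the verification in the second observation that $R_{S}$ preserves $\Pi(N)$ and that $\Phi\circ R_{S}\in\{\Phi,\overline{\Phi}\}$; everything else is bookkeeping. (One could instead invoke Theorem \ref{thm:inverse-99} in each direction, since both conditions force $f$ to be $L^{1}$-close to a polynomial phase of degree $<s$, but the self-contained reflection argument is simpler and keeps the logical dependencies clean.)
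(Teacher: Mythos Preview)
Your proof is correct and follows essentially the same route as the paper's: both arguments use the sign-reflections $R_{S}$ (the paper works with single-axis reflections and the group they generate) to relate the index sets $\Delta$ and $\Pi(N)$, observe that $\Re\Phi$ is invariant under these reflections, and thereby reduce both conditions to comparable averages of the non-negative quantity $1-\Re\Phi$. Your write-up is in fact somewhat more streamlined than the paper's, which packages the same idea through equivalence classes of parallelepipeds and auxiliary measures $\mu$, $\nu$, $\tilde\nu$; you bypass that layer by working directly with the covering $\Pi(N)=\bigcup_{S}R_{S}(\Delta)$ and the sum inequalities it yields.
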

\begin{remark*}
The equivalence in the above lemma should be understood in the following sense: If  \eqref{cond:00:A} holds with a certain constant $C_1$ implicit in the $O$-notation, then \eqref{cond:00:B} holds with some other implicit constant $C_2 = C_2(C_1)$, and conversely if \eqref{cond:00:B} holds with implicit constant $C_2'$ then \eqref{cond:00:A} holds with some other implicit constant $C_1' = C_1'(C_2')$. We do not claim (nor expect) that there exists a choice of the implicit constants $C_1^*$, $C_2^*$ such that the two resulting statements are precisely equivalent.
\end{remark*}
\begin{proof}
For an $(s+1)$-tuple of integers $\vec n = (n_i)_{i=0}^s$, denote by $F(\vec n) = \bra{ 1\omega \cdot \vec n}_{\omega \in \{0,1\}^s}$ the corresponding parallelepiped. For any $j$ with $1 \leq j\leq s$, we may consider the ``reflection'' along the $j$-th principal axis which maps $F(\vec n)$ to $F(\vec n')$ where $n_0' = n_0 + n_j,\ n_j' = -n_j$ and $n_i' = n_i$ for $i \neq 0,j$. Note that such reflection does not alter the set of vertices of the parallelepiped, merely their ordering.

Consider an equivalence relation $\sim$ on the set of all parallelepipeds (with integer coordinates) where two parallelepipeds $F(\vec n)$ and $F(\vec n')$ are equivalent if and only if one can be obtained from the other by an even number of reflections. Denote by $\cP$ the family of equivalence classes of parallelepipeds with vertices in $[N]$, and for an equivalence class $P := [F(\vec n)]_{\sim} \in \cP$ let $\bar{P}$ denote the equivalence class of the reflection of $F(\vec n)$ along one of the principal axes, and let $f(P) := \prod_{\omega} \conj^{\abs{\omega}} f(1\omega \cdot n_\omega)$. It is standard to check that these definitions are well posed and $f([\bar P]) = \overline{f([P])}$. 

Denote by $\mu$ (resp.\ $\nu$) the probability measure on $\cP$ such that $\mu(P)$ is the probability that $[F(\vec n)]_{\sim} = P$ if the coordinates $n_i$ are chosen independently uniformly at random from $(-N,N) \cap \ZZ$ (resp. from $[0,N) \cap \ZZ$) subject to the condition that all vertices of $F(\vec n)$ lie in $[N]$. This is set up so that 
\begin{equation}
\label{eq:150:01}
\norm{f}_{U^s[N]}^{2^s} = 
{\EEE_{\vec n \in \Pi(N) }
	\bra{\prod_{\omega \in \{0,1\}^s} \conj^{\abs{\omega}} f(1\omega \cdot \vec n) }} 
	= \sum_{P \in \cP } \mu(P) f(P) 
\end{equation}
	and similarly
\begin{equation}
\label{eq:150:02}
{\EEE_{\vec n \in [N]^{s+1} }
	\bra{\prod_{\omega \in \{0,1\}^s} \conj^{\abs{\omega}} f(1\omega \cdot \vec n) \middle| \sum_{i=0}^{s} n_i < N}} 
	= \sum_{P \in \cP} \nu(P) f(P).
\end{equation}

Additionally, it follows from the construction that $\mu(P) = \mu(\bar P)$. Unfortunately, $\nu$ does not enjoy this symmetry, so we introduce $\tilde \nu(P) := (\nu(P) + \nu(\bar P))/2$.

For any parallelepiped $F(\vec n)$ there is precisely one parallelepiped $F(\vec n')$ which can be obtained from $F(\vec n)$ by reflections such that $n_i' \geq 0$ for all $1 \leq i \leq s$. Conversely, since the reflections commute, every $\vec n'$ corresponds to $2^r$ choices of $\vec n$ where $0 \leq r \leq s$ depends on the number of $i$ such that $n_i' = 0$. It follows that there exists a constant $C$ (actually, we can take $C = 2^s$) such that for all $P \in \cP$ we have
\begin{equation}
\label{eq:150:05}
 1/C \leq { \mu(P)}/{ \tilde\nu(P) } < C.
\end{equation}
Having set up the notation, we can equivalently write \eqref{cond:00:A} as
\begin{equation}
\label{eq:150:06}
	\sum_{P \in \cP} \mu(P) \bra{ 1-\Re f(P)} = O(\e).
\end{equation}
Note that we can freely replace $f(P)$ with $\Re f(P)$ because of symmetry of $\mu$. Similarly, \eqref{cond:00:B} is equivalent to
\begin{equation}
\label{eq:150:07}
	\sum_{P \in \cP} \tilde \nu(P) \bra{ 1-\Re f(P)} = O(\e),
\end{equation}
where we similarly note that we could replace $\nu$ with $\tilde\nu$ because $\Re f(P) = \Re f(\bar P)$. It remains to note that conditions \eqref{eq:150:06} and  \eqref{eq:150:07} are equivalent (up to the a factor of $C$ in the implicit constant) because of \eqref{eq:150:05}. 
\end{proof}

The absolute values of the weights $W^{(l)}(f,\bb r, \bb r')$ appearing in \eqref{eq:50:00} for any choice of $f$ and $\bb r$ sum to at most $1$; indeed, by the triangle inequality we have
\begin{equation}
\label{eq:150:57}
	\sum_{\bb r' \in \NN_0^{2^s}} \abs{ W^{(l)}(f,\bb r, \bb r')} 
	\leq \sum_{\bb r' \in \NN_0^{2^s}} \PP_{\vec e \in [q^l]^{s+1}} \bra{\delta^{(l)}(\bb r, \vec e) = \bb r'} = 1.
\end{equation}
Recall that by Lemma \ref{lem:E-to-Gowers}, an improvement over the trivial bound $ \abs{ W^{(l)}(f,\bb r, \bb r')} \leq \PP\bra{\delta^{(l)}(\bb 0, \vec e) = \bb 0}$ is equivalent to a comparable improvement over the trivial bound $\norm{f}_{U^s[q^l]} \leq 1$.
The following proposition shows how such an improvement translates into a bound on $A(f,\bb r, L)$. 

\begin{proposition}\label{prop:E-bound=>A-bound}
	There exist constants $l_0 \in \NN$ and $c > 0$ such that the following is true. Suppose that $f \colon \NN_0 \to \UU$ is $q$-multiplicative and that $K_i \geq 0$ and $\e_i > 0$ ($i \geq 0$) are sequences with $K_0 = 0$, $L_i := K_{i+1}-K_i \geq l_0$ and
	\begin{equation}\label{eq:51:50}
	\norm{S^{K_i} f}_{U^s[q^{L_i}]} \leq  1 - \e_i \text{ for all } i \geq 0.
	\end{equation}	 
Then for any $M \geq 0$, any $K \geq K_M$ and any $\bb r \in \NN_0^{2^s}$ with $0 \leq r_\omega \leq s$ for all $\omega \in \{0,1\}^s$ it holds that
\begin{equation}\label{eq:51:01}
	 A(f,\bb r, K) \ll \exp\bra{-c \sum_{i = 0}^{M-1} \e_i}.	
 	\end{equation}	 
\end{proposition}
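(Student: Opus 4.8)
The plan is to unfold the recurrence of Lemma~\ref{lem:recurrence} exactly $M$ times, peeling off the digit blocks of lengths $L_0,L_1,\dots,L_{M-1}$ in turn, and to harvest at each peeling a multiplicative gain of size $1-O(\e_i)$ coming from the hypothesis \eqref{eq:51:50}. Write $g_i:=S^{K_i}f$, so that each $g_i$ is $q$-multiplicative, $\norm{g_i}_{U^s[q^{L_i}]}\le 1-\e_i$, and $S^{L_i}g_i=g_{i+1}$. Applying \eqref{eq:50:00} to $g_i$ on $[q^{K-K_i}]$ with $l=L_i$ (legitimate because $K-K_i\ge K_M-K_i\ge L_i$) and iterating gives
$$A(f,\bb r,K)=\sum_{\bb r^{(1)},\dots,\bb r^{(M)}}A\!\left(g_M,\bb r^{(M)},K-K_M\right)\prod_{i=0}^{M-1}W^{(L_i)}\!\left(g_i,\bb r^{(i)},\bb r^{(i+1)}\right)\ +\ E,$$
where $\bb r^{(0)}=\bb r$. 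By the Remark following Lemma~\ref{lem:recurrence} every $\bb r^{(i)}$ occurring satisfies $0\le r^{(i)}_\omega\le s$, so the sum is finite; moreover $\abs{A(g_M,\cdot,\cdot)}\le 1$ since $f$ is $\UU$-valued. Peeling the sums over $\bb r^{(M)},\dots,\bb r^{(1)}$ one at a time and using \eqref{eq:150:57} at each stage bounds the main term by $\prod_{i=0}^{M-1}\theta_i$, where $\theta_i:=\max_{\bb r}\sum_{\bb r'}\abs{W^{(L_i)}(g_i,\bb r,\bb r')}\le 1$ (the maximum over $\bb r\in\NN_0^{2^s}$ with $0\le r_\omega\le s$), while the accumulated error obeys $\abs{E}\ll\sum_{i=0}^{M-1}q^{-(K-K_{i+1})}\prod_{j<i}\theta_j$ — the $i$-th application of \eqref{eq:50:00} contributes an error of size $O(q^{-(K-K_{i+1})})$, carried along by the $\ell^1$-mass of the weights already extracted in front of it.

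The crux of the argument is the per-step estimate
$$\theta_i\ \le\ 1-c\,\e_i\qquad(i\ge 0),$$
for an absolute constant $c>0$, valid once $l_0$ is taken large. The bound $\theta_i\le1$ is \eqref{eq:150:57}; the gain must be squeezed out of $\norm{g_i}_{U^s[q^{L_i}]}\le1-\e_i$ via Lemma~\ref{lem:E-to-Gowers}. Fixing $\bb r$ with $0\le r_\omega\le s$ and writing $\phi(\vec e)=\prod_\omega\conj^{\abs\omega}g_i(1\omega\cdot\vec e\bmod q^{L_i})$, one groups the average \eqref{eq:50:02} over the carry-fibres $\{\vec e\in[q^{L_i}]^{s+1}:\delta^{(L_i)}(\bb r,\vec e)=\bb r'\}$; since $\abs{W^{(L_i)}(g_i,\bb r,\bb r')}$ equals the measure of the fibre times the modulus of the average of $\phi$ over it, one gets $1-\theta_i\ge\tfrac12\,\EE_{\vec e}\abs{\phi(\vec e)-\psi(\vec e)}^2$ with $\psi$ the fibrewise mean of $\phi$. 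On the principal fibre (the one meeting $\vec0$) the defining constraint agrees with $\{\sum_ie_i<q^{L_i}\}$ up to relative error $O_s(q^{-L_i})$ and the reduction $\bmod\,q^{L_i}$ is vacuous, so comparison with Lemma~\ref{lem:E-to-Gowers} applied with $N=q^{L_i}$ turns \eqref{eq:51:50} into a deficiency of $\phi$ along that fibre of order $\e_i$, provided $l_0$ is large enough that $O_s(q^{-L_i})$ is negligible against $c\,\e_i$. I expect this to be the genuine obstacle: Lemma~\ref{lem:E-to-Gowers} controls only a \emph{real part} of the relevant combinatorial average, whereas $\theta_i$ involves a \emph{modulus}, and one must also prevent the remaining fibres from restoring the lost $\ell^1$-mass. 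Both points are to be handled using the $q$-multiplicativity of $g_i$: because $g_i$ on $[q^{L_i}]$ is reconstructed from its one-digit values, its behaviour on the various carry-fibres is rigidly linked (and $\phi\equiv1$ on the locus where one of $e_1,\dots,e_s$ vanishes, which helps fix the phase), so a deficiency on the principal fibre propagates to the whole average and hence to $\norm{g_i}_{U^s[q^{L_i}]}$.

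Granting $\theta_i\le1-c\,\e_i$, the two bounds assemble to give the claim. The main term is at most $\prod_{i<M}(1-c\,\e_i)\le\exp\!\big({-}c\sum_{i<M}\e_i\big)$. For the error, since every $L_j\ge l_0$ we have $K-K_{i+1}\ge K_M-K_{i+1}\ge(M-1-i)l_0$, and using $\theta_j\le1-c\,\e_j$ together with $\e_j\le1$,
$$\abs{E}\ \ll\ \sum_{i=0}^{M-1}q^{-(M-1-i)l_0}\exp\!\Big({-}c\!\!\sum_{j<i}\e_j\Big)\ \le\ \exp\!\Big({-}c\!\!\sum_{j<M}\e_j\Big)\,e^{c}\sum_{k\ge0}\big(e^{c}q^{-l_0}\big)^{k},$$
a convergent geometric series once $q^{l_0}>e^{c}$, so $\abs{E}\ll\exp\!\big({-}c\sum_{j<M}\e_j\big)$ as well. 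Adding the two estimates yields \eqref{eq:51:01}; the cases $M=0,1$ are immediate since then the right-hand side of \eqref{eq:51:01} is $\gg1$ while $\abs{A(f,\bb r,K)}\le1$.
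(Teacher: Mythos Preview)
Your proof has a genuine gap at the per-step inequality $\theta_i\le1-c\,\e_i$. As you yourself flag, Lemma~\ref{lem:E-to-Gowers} converts the hypothesis $\norm{g_i}_{U^s[q^{L_i}]}\le1-\e_i$ only into a bound on the \emph{real part} of the average of $\phi$ over the principal fibre, not on its modulus; and that fibre average, being taken over $\vec e$ with all coordinates nonnegative, lacks the reflection symmetry that would force it to be real. Your suggested fix --- that $\phi\equiv1$ on the locus where some $e_j=0$ pins down the phase --- does not work quantitatively: that locus has relative measure $O(q^{-L_i})$ inside the fibre, so it constrains the fibre mean only to within $O(q^{-L_i})$, which is useless when $\e_i$ is of constant size. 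Nothing you write rules out $\phi$ being nearly constant and nearly unimodular on the principal fibre but with real part bounded away from~$1$; in that scenario $\abs{W^{(L_i)}(g_i,\bb r,\bb 0)}$ is essentially its full probability, $\theta_i$ is essentially~$1$, and your variance inequality $1-\theta_i\ge\tfrac12\EE\abs{\phi-\psi}^2$ gives nothing. The appeal to $q$-multiplicativity to exclude this is a gesture, not an argument.

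The paper sidesteps this difficulty rather than resolving it. It tracks two quantities, $B_1(i)=\max_{\bb r}\abs{A(g_i,\bb r,K_M-K_i)}$ and $B_0(i)=A(g_i,\bb 0,K_M-K_i)$, the latter defined \emph{without} absolute value. Since $B_0(i)=\norm{g_i}_{U^s[q^{K_M-K_i}]}^{2^s}$ is real and nonnegative, one may take real parts after expanding it by the recurrence; in the $\bb r'=\bb 0$ summand one then meets $B_0(i+1)\cdot\Re W^{(L_i)}(g_i,\bb 0,\bb 0)$, and precisely here the real-part bound from Lemma~\ref{lem:E-to-Gowers} suffices, giving $B_0(i)\le(1-\rho\e_i)B_1(i+1)+O(q^{-(K_M-K_{i+1})})$. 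Combined with the elementary observation that from any $\bb r$ the fibre $\bb r'=\bb 0$ carries mass at least $\rho$ (so $B_1(i)\le(1-\rho)B_1(i+1)+\rho B_0(i+1)+O(\cdot)$), this yields a two-step contraction $B_1(i)\le(1-\rho^2\e_{i+1})B_1(i+2)+O(\cdot)$, which iterates to \eqref{eq:51:01}. The point is that the gain is harvested only after first routing through $\bb r=\bb 0$, and only in real part; your attempt to harvest it in modulus at every $\bb r$ requires an additional idea that is not supplied.
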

\begin{proof}
	By Lemma \ref{lem:E-to-Gowers}, rescaling $\e_i$ by a constant factor we may assume that for all $i \geq 0$ we have
	\begin{equation}\label{eq:51:00}
	\Re{\EEE_{\vec e \in [q^{L_i}]^{s+1}}
	\bra{\prod_{\omega \in \{0,1\}^s} \conj^{\abs{\omega}} S^{K_i} f(1\omega \cdot \vec e \bmod{q^{L_i}}) \middle| \delta^{(L_i)}(\bb 0, \vec e) = \bb 0}}
	 \leq 1 - \e_i
	\end{equation}	
	Applying the recursive formula \eqref{eq:50:00} from Lemma \ref{lem:recurrence} with $L = K$ and $l = K-K_M$ we see that it will suffice to show the estimate \eqref{eq:51:01} with $K=K_M$. Fix the value of $M$ and put	
	\begin{align}
	B_1(i) &= \max_{\bb r} \abs{ A(S^{K_i}f,\bb r,K_M-K_i) \label{eq:51:02}}\\
	B_0(i) &= {A(S^{K_i}f,\bb 0,K_M-K_i)} = \norm{S^{K_i}f}_{U^s[q^{K_M-K_i}]}^{2^s}, \label{eq:51:03}
	\end{align}
	where the maximum runs over $\bb r \in \NN_0^{2^s}$ with $0 \leq r_\omega \leq s$. Note that $0 \leq B_0(i) \leq B_1(i) \leq 1$ for all $i \geq 0$, and that crucially in \eqref{eq:51:03} there is no absolute value. Our aim is to show that $B_1(0) \ll \exp\bra{-c \sum_{i = 0}^{M-1} \e_i}$.
	
	Provided that $l_0$ is large enough, it follows directly from \eqref{eq:50:01} there exists a constant $\rho > 0$ such that 
	\begin{equation}\label{eq:51:04}
	\PPP_{\vec e \in [q^l]^{s+1}}\bra{\delta^{(l)}(\bb r, \vec e) = \bb 0} \geq \rho
	\end{equation}
	for all $l \geq l_0$ and all $\bb r$ as above. Hence, applying \eqref{eq:50:00} with $L = K_M - K_{i}$ and $l = L_i$ for any $i \geq 0$ we obtain 
\begin{align}
	B_1(i) &\leq (1-\rho) B_1(i+1) + \rho B_0(i+1) + O(q^{-(K_M - K_{i+1})}) \label{eq:51:05}\\
	B_0(i) &\leq (1-\rho\e_{i}) B_1(i+1) + O(q^{-(K_M - K_{i+1})}). \label{eq:51:06}
\end{align} 
Combining the two estimates \eqref{eq:51:05} and \eqref{eq:51:06}, we conclude that 
\begin{align}
	B_1(i) &\leq (1-\rho^2\e_{i+1}) B_1(i+2) + O(q^{-(K_M - K_{i+2})}). \label{eq:51:07}
\end{align} 

Let $k \leq M$ be an index to be optimised in the course of the argument.
Applying \eqref{eq:51:07} to even $i$ with $0 \leq i < k-2$ and using the bound $1-x \leq e^{-x}$, we conclude that
\begin{align}
	B_1(0) & \ll \exp\bra{ - \rho^2 \sum_{j < k/2 } \e_{2j} } +  O(q^{-(K_M-K_k)}). \label{eq:51:08}
\end{align} 
Similarly, taking odd $i$ in the same range $0 \leq i < k-2$  we obtain
\begin{align}
	B_1(0) \ll B_1(1) \ll \exp\bra{ -\rho^2 \sum_{j < k/2 } \e_{2j+1} } + O(q^{-(K_M-K_k)}). \label{eq:51:09}
\end{align}

Combining \eqref{eq:51:08} and \eqref{eq:51:09} and using the estimate $K_M - K_k \geq (M-k)l_0$ we conclude that
\begin{align}
	B_1(0) \ll \exp\bra{ - \frac{\rho^2}{2} \sum_{i < k } \e_{i}} + \exp\bra{- (M-k)l_0 \log q }.  \label{eq:51:10}
\end{align}

Pick $k$ such that $\sum_{i < k } \e_{i} = \frac{1}{2} \sum_{i<M} \e_i + O(1)$ (which is always possible since $\e_i < 1$). Then $M-k \geq \frac{1}{2} \sum_{i<M} \e_i - O(1)$. Hence, \eqref{eq:51:10} yields
\begin{align}
	B_1(0) \ll \exp\bra{ - c \sum_{i < M } \e_{i}},  \label{eq:51:11}
\end{align}
where we may further specify $c = \rho^2/4$.
\end{proof}

It follows from the above result that, roughly speaking, for a $q$-multiplicative sequence $f \colon \NN_0 \to \UU$ the uniformity norms $\norm{f}_{U^s[N]}$ are small unless the norms $\norm{S^K f}_{U^s[q^L]}$ are almost always close to $1$ (for, say, bounded $L$ and arbitrary $K$). In the latter case, inspired by the terminology introduced by A.\ Granville \cite{Granville-2009}, we will informally say that $f$ is ``pretentious'' (since it ``pretends'' to be a polynomial phase). In the subsequent section, we shall study such ``pretentious'' functions in considerable detail.

\makeatletter{}\section{Pretentious $q$-multiplicative functions}

In this section, we obtain a description of ``pretentious'' $q$-multiplicative sequences, which are introduced towards the end of the previous section. As discussed in Section \ref{sec:Background}, $1$-bounded sequences with near-maximal Gowers norms behave approximately like polynomial phases. In the following proposition, we show that the only $q$-multiplicative sequences which behave approximately like polynomial phases are essentially equal to linear phases.

\begin{proposition}\label{prop:rigidity}
	There exist constants $\e_0 > 0$, $\delta_0 > 0$ and $L_0 > 0$ such that the following holds. 

	Suppose that $f \colon \NN_0 \to \UU$ is $q$-multiplicative, $p \in \RR[x]$ with $\deg p < s$ and
\begin{equation}\label{eq:144:00}
	f(n) = e(p(n) + \OO(\e)) \quad \text{for $\delta$-almost all } n < q^L,
\end{equation}
	for some $\e < \e_0$, $\delta < \delta_0$ and $L \geq L_0$. Then there exist $\alpha \in \RR$ and $b \in \QQ[x]$ such that $b(n) \bmod{1}$ is periodic with a period $q^{O(1)}$ and
\begin{equation}\label{eq:144:09}
	f(n) = e(\alpha n + \beta + b(n) + O(\e)) \quad \text{for $\delta$-almost all } n < q^L.
\end{equation}
	
\end{proposition}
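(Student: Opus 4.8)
The plan is to exploit $q$-multiplicativity to turn the hypothesis that $f$ resembles a polynomial phase into a system of approximate functional equations on the digit values $f(aq^t)$, and then solve that system. First, writing $p(x) = \sum_{j=0}^{s-1} c_j x^j$, observe that $q$-multiplicativity gives $f(n) = \prod_{t} f(n^{(t)} q^t)$, so $\arg f$ is (morally) additive in the digit positions, whereas a genuine polynomial of degree $\geq 2$ is far from additive: expanding $p(m+n)$ for $m,n$ with disjoint digit supports produces cross-terms $c_j \sum_{i<j}\binom{j}{i} m^i n^{j-i}$ which must be matched against $\OO(\e)$-sized errors for $\delta$-almost all pairs. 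The key combinatorial input is that $[q^L]$ contains a proportionally large family of pairs $(m,n)$ with disjoint supports for which the hypothesis \eqref{eq:144:00} applies to $m$, $n$ and $m+n$ simultaneously; a union-bound/averaging argument over random digit strings (splitting the $L$ digit positions into two random blocks) shows the exceptional set has density $O(\delta)$, so such good pairs are abundant.

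Second, I would compare $f(n)$ block by block. Fix a large constant $T = O(1)$ and, for $n < q^L$, decompose $n = u + q^{jT} v$ where $u < q^{jT}$ carries the low digits and $v$ the high digits. Both $f(n) = f(u) f(q^{jT}v)$ and $e(p(n))$ can be expanded; matching the two and using that $v$ ranges over a full residue system modulo small powers of $q$, the degree-$(\geq 2)$ coefficients of $p$ are forced to be (approximately) rationals with denominator $q^{O(1)}$ — this is where the periodic rational part $b(x) \in \QQ[x]$ comes from. Concretely, one argues that $c_j q^{jt}$ must be $O(\e)$-close to a rational of bounded denominator for all $j \geq 2$ and all $t$ in a long range, which (since $q^{jt}$ grows) pins down $c_j \bmod{1}$ to lie in $\frac{1}{q^{O(1)}}\ZZ$; set $b(x) := \sum_{j \geq 2} c_j x^j$ after replacing each $c_j$ by the nearby rational, incurring only an $O(\e)$ change in $e(p(n))$ over $n < q^L$. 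The linear coefficient $c_1$ becomes $\alpha$ and the constant $c_0$ becomes $\beta$, and what remains is exactly \eqref{eq:144:09}.

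Third, I need the claim that $b(n) \bmod 1$ is periodic with period $q^{O(1)}$: this is automatic once each coefficient of $b$ is a rational with denominator a power of $q$ bounded by $q^{O(1)}$, since then $b(n+q^{M}) \equiv b(n) \pmod 1$ for a suitable $M = O(1)$ (one checks $q^M$ divides all the relevant denominators after clearing the binomial coefficients, using that $j \leq s-1 = O(1)$). I would package the numerology so that $\e_0, \delta_0, L_0$ depend only on $q$ and $s$; in particular $L_0$ must be large enough that the random-block splitting yields a genuinely positive-density family of good pairs, and $\e_0$ small enough that "approximately rational with bounded denominator" forces "equal to a specific such rational."

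The main obstacle I anticipate is the \emph{propagation of the almost-everywhere hypothesis}: \eqref{eq:144:00} only controls $f$ on a density-$(1-\delta)$ set, but the functional-equation arguments need to evaluate $f$ at $u$, $v$, $u + q^{jT}v$, and various shifts, all simultaneously in the good set. Handling this requires a careful Fubini/averaging step showing that for most splittings the relevant tuples avoid the exceptional set, together with some robustness in the "forcing rationality" step so that finitely many bad values of $t$ or $j$ do not destroy the conclusion. A secondary technical point is ensuring the error terms genuinely stay $O(\e)$ and do not accumulate a factor of $L$ across the $\sim L/T$ blocks — this is managed by only comparing a bounded number ($O(1)$) of blocks at a time rather than telescoping through all of them, which suffices to extract the coefficients.
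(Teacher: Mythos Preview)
Your overall strategy---use $q$-multiplicativity to force approximate functional equations on the coefficients of $p$---matches the paper's, but there is a genuine gap in your step~3 and hence in step~5. You claim that the argument ``pins down $c_j \bmod 1$ to lie in $\frac{1}{q^{O(1)}}\ZZ$'', i.e.\ that the higher coefficients have denominator a pure power of $q$. This is false. The cross-terms arising from $p(u+q^t v)-p(u)-p(q^t v)$ involve binomial coefficients, so what one actually obtains is that the $c_j$ are close to rationals whose denominators are divisible by factorials as well as powers of $q$. Concretely: take $q=3$, $s=3$, $p(x)=x^2/2$. Then $e(p(n))=(-1)^n$ is genuinely $3$-multiplicative and satisfies \eqref{eq:144:00} exactly, yet $c_2=1/2\notin\frac{1}{3^{O(1)}}\ZZ$. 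So your $b(x)=\sum_{j\ge 2}c_j x^j$ has period $2$, not a power of $3$, and the ``automatic'' periodicity claim in step~5 fails. (The proposition is still true for this example, since $e(n^2/2)=e(n/2)$, but only after absorbing $1/2$ into the \emph{linear} coefficient $\alpha$---which your argument does not do.)

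The paper's proof recognises this: it first shows (by $d$-fold finite differences over sets $J$ of digit positions, which cleanly isolates the degree-$d$ coefficient) that $\alpha_d$ is close to $a_d/Q_d$ with $Q_d=d!\,q^{(d-1)d/2}Q_{d+1}$, giving a preliminary $b'(n)$ of period $Q_2$ which is \emph{not} a power of $q$. The substantial remaining work---absent from your proposal---is to prove that $e(b'(n))$ is itself $q$-multiplicative, then Fourier-expand $e(b'(n))$ over $\ZZ/P\ZZ$ with $P=P_0P_1$ ($P_0\mid q^\infty$, $\gcd(P_1,q)=1$), and use the product formula \eqref{eq:44:31} to show only one frequency $k_1/P_1$ survives. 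Subtracting that linear phase from $b'$ yields $b$ with period $P_0\mid q^{O(1)}$. This ``strip the coprime part into $\alpha$'' step is the crux you are missing; without it the conclusion about the period of $b$ does not follow.
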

\begin{proof}
	Write $p(x) = \sum_{j = 0}^{s-1} \alpha_j x^j$. We first show by a downwards induction on $d \in \{2,3,\dots,s-1\}$ that $\alpha_d$ takes the form $\alpha_d = \eta_d + a_d/Q_d$ where $\eta_d = O(\e)/q^{dL}$, $a_d \in \ZZ$ and $Q_d$ is recursively given by $Q_s = 1$ and $Q_{d} = d! q^{(d-1)d/2}Q_{d+1}$. Letting $\alpha_s = 0$, the statement is vacuously true for $d = s$.

	For a set $I \subset \{0,1,\dots,L-1\}$, denote by $\cF_I$ the set of integers $n < q^L$ with expansions $n = \sum_{l<L} n^{(l)}q^l$ such that $n^{(l)} = 0$ for all $l \in I$, and let $m_I = \sum_{l \in I} q^l$. Let $J \subset \{0,\dots,L-1\}$ be any fixed set with $\abs{J} = d$. Then for $2^d q^{d}\delta$-almost all $n \in \cF_J$ it holds that
	\begin{align}\label{eq:144:21}
		1 = \prod_{I \subset J} \conj^{\abs{I}} \bra{f(n) \prod_{l \in I} f(q^l) }
		 &= \prod_{I \subset J} \conj^{\abs{I}} f(n + m_I) \\&=
		e\bra{ \sum_{I \subset J} (-1)^{\abs{I}} p(n+m_I) + O(\e) }. 
	\end{align}	 
	(Note that the first equality holds by inclusion-exclusion principle and is only true when $d \geq 2$. The last equality is the only one which does not holds for all $n$, and it fails for at most $2^d \delta q^L$ choices of $n$ by the union bound.)
	Letting $$q(n) = \sum_{I \subset J} (-1)^{\abs{I}} p(n+m_I),$$ we conclude that $\fpa{q(n)} = O(\e)$ for $2^d q^{d}\delta$-almost all $n \in \cF_J$. Note also $\deg q < s$ so 
	\begin{equation}\label{eq:144:45}	
	\sum_{j = 0}^{s} (-1)^j \binom{s}{j} q(n+jh) = 0 \text{ for any $h \in \ZZ$}.
	\end{equation}
	It is elementary to show that given $n \in \cF_I$ there exist $\gg q^L$ choices of $h \in \ZZ$ such that $n + jh \in \cF_I$ for $1 \leq j \leq s$. 	
	 (One way to construct such $h$ is as follows. Set $k = \floor{\log_q s +10}$. For each $l \in J \cup \{L-1\}$, if $J \cap [l-k,l) \neq \emptyset$ let $l' = \max J \cap [l-k,l)$ the element of $J$ preceding $l$; else let $l' = \max(l-k,0)$. If $n^{(l-1)} \leq q/2$ then set $h^{(l)} = h^{(l-1)} = \dots h^{(l'+1)} = 0$. Otherwise, if $n^{(l-1)} > q/2$ then set $h^{(l)} = h^{(l-1)} = \dots h^{(l'+1)} = q-1$. The remaining digits $h^{(l)}$ for $0 \leq l < L$ are arbitrary. If $h \geq q^L/2$, replace it with $h - q^L$. We have specified fewer than $ d (\log_q s + 10)$ digits of $h$, so the number of $h$ thus constructed is at least $ q^{L - 10 d}/s^d \gg q^{L}$.)
	 
	 On the other hand, for $O(\delta)$-almost all $h \in [-q^L/2, q^L/2]$ it holds for each $1 \leq j \leq s$ that if $n+jh \in \cF_I$ then $\fpa{q(n+jh)} = O(\e)$. Hence, if $\delta_0$ is small enough in terms of implicit constants above, there is at least one $h$ such for each $1 \leq j \leq s$ it holds that $n+jh \in \cF_I$ and $\fpa{q(n+jh)} = O(\e)$. It follows from \eqref{eq:144:45} that $\fpa{q(n)} = O(\e)$ for all $n \in \cF_J$, provided that $\delta_0$ is small enough (in terms of $s$). In particular,  	\begin{equation}\label{eq:144:21b}
	\fpa{q(0)} = \fpa{ \sum_{I \subset J} (-1)^{\abs{I}} p(m_I) } = O(\e).
	\end{equation}	
	
	Using the inductive assumption we see that
	\begin{equation}\label{eq:144:22}
		 \sum_{I \subset J} (-1)^{\abs{I}} p(m_I) 
		 = d! q^{\sum_{l \in J} l} \alpha_d 
		 + \sum_{j > d} \frac{a_j}{Q_j} \sum_{I \subset J} (-1)^{\abs{I}} m_I^j + \sum_{j > d} \eta_j \sum_{I \subset J} (-1)^{\abs{I}} m_I^j.
	\end{equation}
	Note that the terms of degree $< d$ cancel out by elementary algebra and the inclusion-exclusion principle.
	
	In \eqref{eq:144:22}, the first term is the main contribution, the second one can be ignored after multiplying by $Q_{d+1}$ and the third one is a negligible error term and can be bounded by $O(\e)$. Combining \eqref{eq:144:22} with \eqref{eq:144:21b} we thus conclude that
	\begin{equation}\label{eq:144:23}
		q^{k(J)} Q_d\alpha_d = O(\e) \bmod{1},
	\end{equation}
	where $k(J) = \sum_{l \in J} l - d(d-1)/2$. For any choice of $J$ we have $0 \leq k(J) \leq d L - d^2$, and conversely for any $k$ in this range there exists $J$ such that $k(J) = k$. Hence, 
	\begin{equation}\label{eq:144:24}
		q^{k} Q_d\alpha_d = O(\e) \bmod{1} \text{ for any $k$ with $0 \leq k \leq d L - d^2$}.
	\end{equation}
	
	Using \eqref{eq:144:24} inductively for all $k$ in the prescribed range and assuming that $\e_0$ is small enough that the error term in \eqref{eq:144:24} is strictly less than $1/q^2$ we conclude that $Q_d\alpha_d = O(\e/q^{dL})$, whence we have the decomposition $\alpha_d = a_d/Q_d + \eta_d$ with $\eta_d = O(\e/q^{dL})$.
	
	Combining the coefficients $a_d/Q_d$ and $\eta_d$ to form polynomials $b'(n)$ and $\eta'(n)$ respectively, we obtain the decomposition
	\begin{equation}\label{eq:144:60}
		p(n) = \alpha_0 + \alpha_1 n + b'(n) + \eta'(n).
	\end{equation}
	The bound $\fpa{\eta'(n)} = O(\e)$ for $n < q^L$ follows directly from the bounds on the coefficients $\eta_d$. Moreover, $b'(n)$ is a rational polynomial with denominator which divides $Q_2$ and no constant and linear term; in particular $b'(n) \bmod{1}$ is $Q_2$-periodic and $b'(0) = 0$. 
	
	Our next step is to modify $b'(n)$ by adding a linear term in such a way that the resulting function has a period which is a power of $q$. To achieve this goal, we first show that $e(b'(n))$ is $q$-multiplicative. Let $n_1, n_2 \geq 0$ be arbitrary integers having non-zero digits at disjoint sets of positions in base $q$. Pick $n_1', n_2' \geq 0$ such that $n_1'+n_2'$ is minimised, subject to the conditions that $n_1' \equiv n_1 \bmod{Q_2}$, $n_2' \equiv n_2 \bmod{Q_2}$ and $n_1',n_2'$ have non-zero digits at disjoint sets of positions. Note that $n_1', n_2' = O(1)$ and denote the set of positions where non-zero digits appear in $n_1' + n_2'$ by $J$. Repeating the reasoning used to establish \eqref{eq:144:21}, we conclude that for $O(\delta)$-almost all $n \in \cF_J$ it holds that
\begin{equation}
\label{eq:144:80}
	b'(n+n_1'+n_2') - b'(n + n_1') - b'(n+n_2') + b'(n) = O(\e) \pmod{1}.
\end{equation}
	In particular, assuming that $L_0$ is large enough and $\delta_0$ is small enough (in absolute terms), we conclude that \eqref{eq:144:80} holds for at least one $n \equiv 0 \bmod{Q_2}$, meaning that
\begin{equation}
\label{eq:144:81}
	b'(n_1'+n_2') - b'(n_1') - b'(n_2')  = O(\e) \pmod{1}.
\end{equation}
	Since the expression on the left hand side of \eqref{eq:144:81} is rational with denominator $\leq Q_2 = O(1)$, so provided that $\e_0$ is chosen small enough \eqref{eq:144:81} implies an exact equality \begin{equation}
\label{eq:144:82}
	b'(n_1+n_2) = b'(n_1'+n_2') = b'(n_1') + b'(n_2') = b'(n_1) + b'(n_2) \pmod{1}.
\end{equation}
Since $n_1,n_2$ were arbitrary, it follows that $e(b'(n))$ is $q$-multiplicative. 
		Let $P$ be the period of $b'(n) \bmod 1$, and write $P$ as $P = P_0 P_1$ where $P_0$ divides a power of $q$ and $P_1$ is coprime to $q$. 	We have the Fourier expansion:
\begin{equation}\label{eq:144:30}
		e(b'(n)) = \sum_{k_0 = 0}^{P_0-1}  \sum_{k_1 = 0}^{P_1-1} c(k_0,k_1) e( n( k_0/P_0 + k_1/P_1)).  
\end{equation}
The coefficients $c(k_0,k_1)$ can be recovered by the usual formula
$$c(k_0,k_1) = \lim_{N \to \infty} \EEE_{n < N} e(b'(n)) e(-n( k_0/P_0 + k_1/P_1)).$$
On the other hand, since $e(b'(n))$ is $q$-multiplicative, for any $l \in \NN_0$ and $\gamma \in \RR$ we have
\begin{equation}\label{eq:44:31}
\EEE_{n < q^l} e(b'(n) - \gamma n) = \prod_{i < l} \EEE_{a < q} e( b'(a q^i) - \gamma a q^i).
\end{equation}
	In particular, if $c(k_0,k_1) \neq 0$ then for any $a < q$ we have 
\begin{equation}\label{eq:144:10}
b'(a q^l) - a q^l k_1/P_1 \to 0 \pmod{1}  \quad \text{ as $l \to \infty$}.
\end{equation}

We claim that there exists at most one choice of $k_1$ for which \eqref{eq:144:10} holds. Otherwise, there would exist distinct $k_1,k_1' < P_1$ such that $\fpa{ q^l (k_1-k_1')/P_1 } \leq 1/q^2$ for $l$ large enough. This is only possible if the denominator of $(k_1-k_1')/P_1$ divides a power of $q$, which is absurd. Fix the value $k_1$ for which \eqref{eq:144:10} holds (at least one such value exists since $e(b(n)) \neq 0$). Letting $b(x) = b'(x) - (k_1/P_1)x$ we conclude from \eqref{eq:144:30} that $b(n) \bmod 1$ is $P_0$-periodic. To obtain \eqref{eq:144:09}, it remains to put $\alpha = \alpha_1 + k_1/P_1$ and $\beta = \alpha_0$.
\end{proof}

\begin{corollary}\label{cor:pretentious=>linear}
There exist a constant $l_0 \geq 0$ and a function $h \colon \RR_{>0} \to \RR_{>0}$ with $h(\e) \to 0$ as $\e \to 0$ such that the following holds. 

	Suppose that $f \colon \NN_0 \to \UU$ is $q$-multiplicative and $\norm{f}_{U^s[q^L]} > 1-\e$ for some $\e > 0$ and $L \geq l_0$.
	Then there exist $\alpha,\beta \in \RR$ such that 
		\begin{equation}\label{eq:146:01}
	f(n) = e(n \alpha + \beta + O(h(\e))) \quad \text{ for $h(\e)$-almost all $n$ with $n < q^L$ and $q^{l_0} \mid n$}.
	\end{equation}
\end{corollary}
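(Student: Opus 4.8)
The plan is to combine the $99\%$ inverse theorem (Theorem \ref{thm:inverse-99}) with the rigidity statement for $q$-multiplicative sequences (Proposition \ref{prop:rigidity}), and then to use the divisibility constraint $q^{l_0}\mid n$ to remove the residual rational polynomial correction that the latter produces. First I would apply Theorem \ref{thm:inverse-99} with $N=q^L$: since $\norm{f}_{U^s[q^L]}\ge 1-\e$, there is a polynomial phase $g(n)=e(p(n))$ with $p\in\RR[x]$, $\deg p<s$, and $\norm{f-g}_{L^1[q^L]}\le h_1(\e)$, where $h_1(\e)\to 0$ as $\e\to0$. By Markov's inequality, $\abs{f(n)-g(n)}\le h_1(\e)^{1/2}$ for all but a proportion $h_1(\e)^{1/2}$ of $n<q^L$; since $f$ and $g$ are $\UU$-valued, for each such $n$ this chordal estimate converts into $f(n)=e(p(n)+\OO(C_0 h_1(\e)^{1/2}))$ for an absolute constant $C_0$. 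Thus hypothesis \eqref{eq:144:00} of Proposition \ref{prop:rigidity} is satisfied with $\e$ and $\delta$ each of order $h_1(\e)^{1/2}$ and with $L$ unchanged.

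Next, provided $\e$ is small enough that $h_1(\e)^{1/2}$ lies below the thresholds $\e_0,\delta_0$ of Proposition \ref{prop:rigidity}, and provided $L\ge L_0$, that proposition yields $\alpha,\beta\in\RR$ and $b\in\QQ[x]$ with $b(n)\bmod 1$ periodic of a period dividing a fixed power $q^{m_0}$ of $q$ (here $m_0=O(1)$, since the period is $q^{O(1)}$), together with $f(n)=e(\alpha n+\beta+b(n)+O(h_1(\e)^{1/2}))$ for all but a proportion $O(h_1(\e)^{1/2})$ of $n<q^L$. I would then set $l_0:=\max(L_0,m_0)=O(1)$, which forces the hypothesis $L\ge l_0$ to imply $L\ge L_0$. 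For $n<q^L$ with $q^{l_0}\mid n$ we have $q^{m_0}\mid n$, so by periodicity $b(n)\equiv b(0)\pmod 1$; absorbing the constant $b(0)$ into $\beta$ gives $f(n)=e(\alpha n+\beta+O(h_1(\e)^{1/2}))$ for every such $n$ outside the exceptional set. Finally, the exceptional set has at most $h_1(\e)^{1/2}q^L=q^{l_0}\cdot h_1(\e)^{1/2}\cdot q^{L-l_0}$ elements, so it meets the $q^{L-l_0}$ integers $n<q^L$ with $q^{l_0}\mid n$ in a set of proportion $O(q^{l_0}h_1(\e)^{1/2})=O(h_1(\e)^{1/2})$; taking $h(\e)$ to be a suitable absolute multiple of $h_1(\e)^{1/2}$ — and setting $h(\e)=1$ for $\e$ outside the admissible range, which makes the assertion vacuous there (take $\alpha=\beta=0$) — yields \eqref{eq:146:01}.

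The genuine content has already been discharged into Proposition \ref{prop:rigidity} and Theorem \ref{thm:inverse-99}; what remains is bookkeeping. The two points that call for a little care are the passage from the $L^1$-bound of the inverse theorem to the ``$\delta$-almost all, with $\OO(\e)$ phase error'' format demanded by Proposition \ref{prop:rigidity} (a routine Markov argument, using $\abs{z}=\abs{w}=1$ to pass from $\abs{z-w}$ to a phase discrepancy of comparable size), and tracking the inflation of the exceptional set's density by the bounded factor $q^{l_0}$ when one restricts attention from $[q^L]$ to the multiples of $q^{l_0}$ in $[q^L]$. I do not expect either to present a real obstacle.
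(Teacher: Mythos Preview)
Your proposal is correct and follows essentially the same approach as the paper: apply Theorem~\ref{thm:inverse-99}, convert the $L^1$-closeness to the ``$\delta$-almost all with $\OO(\e)$ phase error'' format, invoke Proposition~\ref{prop:rigidity}, and then kill the residual periodic correction $b(n)$ by restricting to multiples of $q^{l_0}$. The paper's proof is terser and glosses over the Markov-type conversion and the density inflation under restriction, both of which you spell out carefully; your treatment of the case where $\e$ is not small (declaring $h(\e)=1$) matches the paper's opening remark that the conclusion is vacuous in that regime.
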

\begin{proof} Note that we may assume that $\e$ is arbitrarily small in absolute terms, else \eqref{eq:146:01} is vacuously true for a suitable choice of $h$. 
It follows from the ``99\% variant'' of the Inverse Theorem for Gowers norms (Theorem \ref{thm:inverse-99}) that there exists a polynomial $p \in \RR[x]$ with $\deg p < s$ such that 
\begin{equation}\label{eq:36:00}
	f(n) = e( p(n) + O(\delta) ) \text{ for $\delta$-almost all } n < q^L,
\end{equation} 
where $\delta = \delta(\e) \to 0$ as $\e \to 0$. By Proposition \ref{prop:rigidity} we have 
\begin{equation}\label{eq:36:01}
	f(n) = e( \alpha n + b(n) + O(\delta) ) \text{ for $\delta$-almost all } n < q^L,
\end{equation} 
where $\alpha \in \RR$ and $b(n) = 0$ for $n$ with $q^{l_0} \mid n$, provided that that $l_0$ is sufficiently large and $\e$ is sufficiently small. \end{proof}
\begin{remark}\label{rmrk:pretentious=>linear}
	The above Corollary \ref{cor:pretentious=>linear} can also be construed as the ``99\% variant'' of Theorem \ref{thm:oscillating=>Gowers-uniform}. 
	For future reference, we note that in said corollary, if $\e$ is sufficiently small in terms of $L$, then \eqref{eq:146:01} holds for all $n$ in the prescribed range (rather than almost all). This can be construed as a (slight strengthening of) the ``100\% variant'' of Theorem \ref{thm:oscillating=>Gowers-uniform}.
\end{remark}

\makeatletter{}\section{The end of the chase}

We are now ready to finish the proof of Theorem \ref{thm:oscillating=>Gowers-uniform}. 
The main missing ingredient is contained in the following proposition. The proof of Theorem \ref{thm:oscillating=>Gowers-uniform} is immediate by combining the following Proposition \ref{prop:oscillating=>E-saving} and Lemma \ref{lem:wlog-N=q^L}.

\begin{proposition}\label{prop:oscillating=>E-saving}
There exists a constant $\kappa > 0$ such that for any $q$-multiplicative sequence $f \colon \NN_0 \to \UU$ and any $L \geq 0$ it holds that
	\begin{equation}\label{eq:47:90}
		\norm{f}_{U^s[q^L]} \ll \norm{f}_{U^2[q^L]}^{\kappa}.
	\end{equation}
\end{proposition}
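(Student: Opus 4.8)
The plan is to run a ``density increment along the recurrence'' argument combining Proposition~\ref{prop:E-bound=>A-bound} with Corollary~\ref{cor:pretentious=>linear}. Fix $L$ and write $\delta = \norm{f}_{U^2[q^L]}$; we want $\norm{f}_{U^s[q^L]} \ll \delta^\kappa$. If $\delta$ is bounded below the statement is trivial, so assume $\delta$ is small. First I would build a partition $0 = K_0 < K_1 < \dots < K_M \leq L$ of (an initial segment of) $\{0,\dots,L\}$ into blocks of length $L_i = K_{i+1}-K_i \geq l_0$, constructed greedily: given $K_i$, take $L_i$ minimal with $L_i \geq l_0$ such that $\norm{S^{K_i} f}_{U^s[q^{L_i}]} \leq 1 - \e$ for a suitable small absolute constant $\e > 0$; if no such $L_i \leq$ (some bounded threshold depending only on $\e$) exists, declare the block ``bad'' and stop (or skip, see below). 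Intuitively, in a bad block the norm $\norm{S^{K_i}f}_{U^s[q^{l}]}$ stays close to $1$ for all $l$ in a long range, which is exactly the ``pretentious'' regime.

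The dichotomy is then: either there are many good blocks, in which case Proposition~\ref{prop:E-bound=>A-bound} (applied with the constant sequence $\e_i = \e$) gives $\norm{f}_{U^s[q^L]}^{2^s} = A(f,\bb 0, L) \ll \exp(-c\e \cdot \#\{\text{good blocks}\})$, so if the number of good blocks is $\gg \log(1/\delta)$ we are done with room to spare; or else there are few good blocks, hence some bad block of substantial length, on which Corollary~\ref{cor:pretentious=>linear} applies to $S^{K_i}f$. This yields $\alpha,\beta \in \RR$ with $S^{K_i}f(n) = e(\alpha n + \beta + O(h(\e)))$ for $h(\e)$-almost all $n < q^{L_i}$ with $q^{l_0}\mid n$, i.e.\ $f(q^{K_i}(q^{l_0}m + \text{stuff}))$ is close to a linear phase in $m$ on a set of density $1 - h(\e)$. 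Untangling $q$-multiplicativity, this forces $\EEE_{n < q^{L_i'}} S^{K_i + l_0} f(n) e(-\alpha q^{l_0} n)$ to be close to $1$ in absolute value for $L_i'$ slightly smaller than $L_i$, hence $\norm{S^{K_i+l_0}f}_{U^2[q^{L_i'}]} \geq 1 - \e'$ where $\e' = \e'(\e) \to 0$ as $\e \to 0$ (using \eqref{eq:U2-vs-Fourier}, the phase $e(-\alpha q^{l_0}n)$ being $U^2$-invariant). Then partitioning $[q^L]$ into $q$-adic intervals aligned with position $K_i + l_0$, together with $q$-multiplicativity and the triangle inequality as in the proof of Lemma~\ref{lem:wlog-N=q^L}, propagates this lower bound back: $\norm{f}_{U^2[q^L]} \gg_{\e} \norm{S^{K_i+l_0}f}_{U^2[q^{L_i'}]}^{O(1)}$ up to a loss of $q^{-\Theta(L-K_i)}$, which is negligible once the bad block sits far enough inside $[0,L]$. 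This contradicts $\delta$ being small, provided $\e$ was chosen small enough in terms of the eventual $\kappa$.

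Making this quantitative requires a little care with the ordering of parameters, so the cleanest way to organise it is by contradiction: suppose $\norm{f}_{U^s[q^L]} \geq \delta^{\kappa}$ for a $\kappa$ to be fixed. By Proposition~\ref{prop:E-bound=>A-bound} the greedy construction can produce at most $O_{\e}(\kappa \log(1/\delta))$ good blocks before the accumulated exponential decay would violate this, so taking $\kappa$ small the good blocks occupy only a short total length and there is a bad block $[K_i, K_{i+1})$ of length $\gg_{\e} L$ (in fact one may arrange $K_i \leq L/2$, say). Applying the pretentiousness argument of the previous paragraph on that block yields $\norm{f}_{U^2[q^L]} \gg_{\e} 1$, hence $\delta \gg_{\e} 1$, and then $\norm{f}_{U^s[q^L]} \leq 1 \ll_{\e} \delta^{\kappa}$ trivially --- contradiction for $\delta$ small. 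One then checks the dependence of constants closes up, giving an absolute $\kappa > 0$.

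The main obstacle I anticipate is the bookkeeping in converting Corollary~\ref{cor:pretentious=>linear} --- which only controls $S^{K_i}f$ on residues divisible by $q^{l_0}$ and only for $h(\e)$-almost all such residues --- into a genuine lower bound on $\norm{f}_{U^2[q^L]}$: one must strip off the $q^{l_0}$ factor (introducing the shift $S^{K_i+l_0}$ and a bounded-index subdivision, each costing a harmless power of the norm via Lemma~\ref{lem:wlog-N=q^L-A}/\ref{lem:wlog-N=q^L}), cancel the linear phase using $U^2$-invariance, handle the exceptional density-$h(\e)$ set via the triangle inequality and $1$-boundedness, and finally transport the estimate from the interior block $[q^{K_i+l_0}]\cdot(\text{block})$ back to all of $[q^L]$ against the $q^{-\Theta(L-K_i)}$ error from the partition, which is why one wants the bad block to start well before position $L$. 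Keeping track of how the loss in $\e'$, $h(\e)$, and the exponents compound --- so that a single choice of the small constant $\e$ makes everything consistent and leaves an absolute $\kappa$ --- is the delicate part; everything else is a routine combination of tools already assembled in the excerpt.
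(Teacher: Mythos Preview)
Your overall architecture---greedy block decomposition, Proposition~\ref{prop:E-bound=>A-bound} for the upper bound, Corollary~\ref{cor:pretentious=>linear} to extract approximate linearity---matches the paper's. The genuine gap is in the step where you pass from pretentiousness on a \emph{single} bad block to a lower bound on $\norm{f}_{U^2[q^L]}$.

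First, the claim ``there is a bad block $[K_i,K_{i+1})$ of length $\gg_\e L$'' is not justified. Proposition~\ref{prop:E-bound=>A-bound} caps the \emph{number} of blocks at $M=O_\e(\kappa\log(1/\delta))$, but since $\delta\gg q^{-L/2}$ one only has $\log(1/\delta)\ll L$, so $M$ can be of order $\kappa L$. The longest block then has length $\geq L/M\gg_\e 1/\kappa$, which is merely a constant once $\kappa$ is fixed---not $\gg_\e L$. So in the regime where $\delta$ is genuinely small the bad blocks may be badly fragmented.

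Second, and more fundamentally, even granting a long bad block your ``propagation back'' is too optimistic. By $q$-multiplicativity one has the product formula
\[
\abs{\EEE_{n<q^L} f(n)e(-n\alpha)} = \prod_{l<L}\abs{\EEE_{a<q} f(aq^l)e(-aq^l\alpha)},
\]
and the $\alpha$ you obtain from one block controls only the factors at positions $l$ \emph{inside} that block; the factors at the remaining $\Theta(M)$ positions can each be arbitrarily small, so the loss is not $q^{-\Theta(L-K_i)}$ but rather uncontrolled. Lemma~\ref{lem:wlog-N=q^L} goes in the wrong direction here and does not rescue this.

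The paper resolves both issues by extracting an $\alpha_i$ on \emph{every} block (defining $L_i$ as the first time the pretentious condition \emph{fails}, so that \eqref{eq:20:01a} holds on the interior of each block) and then building a single global $\alpha$ \emph{digit by digit}: on positions inside block $i$ it copies the digits of $\alpha_i$, and on the $O(1)$-length junctions between blocks it chooses each digit via Parseval to keep the corresponding factor $\geq q^{-1/2}$. This makes the product over each block $\exp(-O(1))$, so $\norm{f}_{U^2[q^K]}\gg\exp(-Cm)$ whenever $K\in I_m$, matching the shape of the $U^s$ upper bound $\exp(-cm)$ and yielding $\kappa=c/C$. The patching of the $\alpha_i$'s across blocks (and the small digit-correction in \eqref{eq:21:50}--\eqref{eq:21:50b} to avoid the zero of the Dirichlet kernel) is the missing idea in your outline.
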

\begin{proof}
	Our strategy is to construct a partition $\NN_0 = \bigcup_{i = 0}^{\infty} I_i$ of the positive integers into a sequence of intervals $I_i = [K_i,K_{i} + L_i)$ such that for any $m \geq 0$ and any integer $K \in I_m$ we have a lower bound 	
	\begin{equation}\label{eq:47:00}
		\norm{f}_{U^2[q^K]} \geq \sup_{\alpha \in \RR} \abs{ \EEE_{n< q^K} f(n)e(-n\alpha) } \gg \exp( - C m),
	\end{equation}
	as well as an upper bound on the Gowers norms
	\begin{equation}\label{eq:47:01}
		\norm{f}_{U^s[q^K]} \ll \exp(-c m)
	\end{equation}
	for some constants $C,c > 0$. Then \eqref{eq:47:90} will follow \eqref{eq:47:00} and \eqref{eq:47:01} with $\kappa = c/C$. 
	
	Minor technical complications arise when $\norm{f}_{U^2[q^L]}$ fails to tend to $0$ as $L \to \infty$. In this case instead of an infinite partition of $\NN_0$ we could consider a finite one $\NN_0 = \bigcup_{i=0}^{i_{\max}}I_i$. While the argument can be adapted almost verbatim to that situation, in order to simplify the notation we proceed differently. Note that \eqref{eq:47:90} does not depend on the values $f(n)$ with $n \geq q^L$. Hence, we may  replace $f$ by a $q$-multiplicative function such that $f(aq^l) = -1$ for $1 \leq a < q$ and $l \geq L$ (it follows from \cite{Gelfond-1968} that the resulting function has $\norm{f}_{U^2[q^L]} \to 0$ as $L \to \infty$; here, we crucially use the fact that the implicit constant in \eqref{eq:47:90} does not depend on $f$). 
	
	Let $\delta > 0$ be a small parameter to be specified in the course of the proof, let $l_0$ be the constants appearing in Corollary \ref{cor:pretentious=>linear}. 	
	 We put $K_0 = 0$ and once $K_i$ has been defined we choose $L_i$ to be the least integer such that there exists no $\alpha,\beta \in \RR$ such that
	\begin{equation}\label{eq:20:01}
	S^{K_i}f(n) = e(n q^{K_i}\alpha + \beta +  \OO(\delta)) \quad \text{ for $\delta$-almost all $n$ with $n < q^{L_i}$ and $q^{l_0} \mid n$}.
	\end{equation}
By construction, $L_i > l_0$, else \eqref{eq:20:01} is vacuously true for $\alpha = 0$. Also, $L_i$ is well defined because $\norm{S^{K_i}f}_{U^2[q^L]} \to 0$ as $L \to \infty$. Next, we put $K_{i+1} = K_i + L_i$. 

We consider the upper bound \eqref{eq:47:01} first since the definitions are set up so that this step is simpler. It follows from Corollary \ref{cor:pretentious=>linear} and the definition of the sequence $L_i$ that there exists a parameter $\e = \e(\delta) > 0$ such that
	\begin{equation}\label{eq:146:00a}
	\norm{S^{K_i}f}_{U^s[q^{L_i}]} < 1 - \e
	\end{equation}
for all $i \in \NN_0$.
Hence, Proposition \ref{prop:E-bound=>A-bound} implies that there is a constant $c = c(\e) > 0$ such that \eqref{eq:47:01} holds for all $K \geq K_m$.

To obtain the lower bound \eqref{eq:47:00} we further refine the partition $\NN_0 = \bigcup_{i=0}^\infty I_i$ by setting 
\begin{align}
K_i' &:= K_i+l_0, \qquad L_i' := L_i - l_0 -1 \geq 0, \label{eq:29:01} \\
I_i' &:= [K_i',K_i' + L_i') = [K_i+l_0, K_{i+1} - 1) \label{eq:29:02}
\\ J_i &:= [K_i'-l_0-1,K_i') = [K_i-1,K_i+l_0). \label{eq:29:03} 
\end{align} 
for $i \geq 1$. Additionally, for $i = 0$ we use the same definitions \eqref{eq:29:01} and \eqref{eq:29:02} and instead of \eqref{eq:29:03} we put $J_0 = [0,K_0')$. This is set up so that $\NN_0 = \bigcup_{i = 0}^{\infty} J_i \cup I_i'$, the lengths of $J_i$ are bounded (indeed, they all have lengths $l_0+1$ or $l_0$) and for each $i \geq 0$ it follows from how $L_i$ are defined that there exist $\alpha_i,\beta_i \in \RR$ such that 
	\begin{equation}\label{eq:20:01a}
	S^{K_i'}f(n) = e(n q^{K_i'}\alpha_i + \beta_i + \OO(\delta)) \quad \text{ for $\delta$-almost all $n < q^{L_i'}$}.
	\end{equation}
(Note that existence of such $\alpha_i$ is trivial if $L_i' = 0$.)

Fix an integer $M \geq 0$. Our strategy is to combine the information encoded in $\alpha_i$ for all intervals $I_i$ to construct $\alpha \in \RR$ satisfying \eqref{eq:47:00} for all $m < M$ and $K \in I_m$. 
Recall that 
\begin{equation}\label{eq:47:02}
	\abs{ \EEE_{n<q^K} f(n)e(-n\alpha) } = \prod_{l < K} \abs{ \EEE_{a < q} f(a q^l) e(- a q^l \alpha) }.
\end{equation}
Hence, it will suffice to ensure the following two conditions hold:
\begin{align}
\label{eq:28:01}	\prod_{l \in J_i} \abs{ \EEE_{a < q} f(a q^l) e(- a q^l \alpha) } = \exp( - O(1)),\\
\label{eq:28:00}	\prod_{l \in I_i'} \abs{ \EEE_{a < q} f(a q^l) e(- a q^l \alpha) } = \exp( - O(1)).
\end{align}

We shall construct $\alpha = 0.\alpha^{(0)}\alpha^{(1)}\alpha^{(2)}\dots$ digit by digit and show that it satisfies \eqref{eq:28:01} and \eqref{eq:28:00}. The digits appearing at positions exceeding $K_M$ will not play a significant role, and for concreteness we set $\alpha^{(l)} = 0$ for $l \geq K_M-1$. For $l \in I_i'$ with $i < M$, in the regime relevant for \eqref{eq:28:00} $f$ behaves like $e(n\alpha_i)$, which motivates us to set $\alpha^{(l)} = \alpha_{i}^{(l)}$ (strictly speaking, for technical reasons we will need to modify this slightly, but for the sake of clarity we leave the definition as it is and explain the modification later). We construct the remaining digits in a descending order. If $l \in J_i$ with $i < M$ and $\alpha^{(k)}$ have been constructed for $k > l$ then it follows from  Parseval's identity that we may choose $\alpha^{(l)}$ such that
\begin{equation}\label{eq:47:10}
\abs{ \EEE_{a < q} f(a q^l) e(- a q^l \alpha) } \geq 
\bra{\EEE_{b<q} \abs{ \EEE_{a < q} f(a q^l) e\big(- a (q^l \alpha +b/q) \big) }^2 }^{1/2}
 = q^{-1/2} =  \exp(-O(1)).
\end{equation}
(Note that the expressions above are independent of the digits of $\alpha$ at positions $<l$.)
Since the intervals $J_i$ have bounded lengths, \eqref{eq:28:01} follows directly from \eqref{eq:47:10} (in fact, the left hand side of \eqref{eq:28:01} is $\geq q^{-(l_0+1)/2}$). It remains to verify \eqref{eq:28:00}.

Fix $i < M$ and put $\gamma = q^{K_i'}(\alpha - \alpha_i)$. If follows from \eqref{eq:20:01a} that
\begin{align}
\label{eq:21:01} \abs{ \prod_{l \in I_i'} \bra{ \EEE_{a < q} f(a q^{l} ) e(- a q^{l} \alpha) } }
 &= \abs{ \EEE_{n < q^{L'_i}} f(nq^{K_i'})e(-n q^{K_i'} \alpha) }
\\  &= \abs{ \EEE_{n < q^{L_i'}} e(-n \gamma)} + O(\delta) 
\\ &= \abs{ \frac{1 - e(q^{L_i'} \gamma)}{q^{L_i'} (1-e(\gamma))}} +  O(\delta)\label{eq:21:02} =
\varphi_{L_i'}(q^{L_i'} \gamma) + O(\delta),
\end{align}
where $\varphi_L(x) := (1-e(x))/(q^L(1-e(x/q^L))$.
Since the digits of $\alpha$ and $\alpha_i$ agree on positions between $K_i'$ and $K_i'+L_i$, it follows that $\abs{\gamma} \leq q^{-{L_i'}}$. We note that $\varphi_L(0) = 1$, $\varphi_L(\pm 1) = 0$ and $\varphi_L$ has no other zeros in $[-1,1]$ for $L \geq 1$ and $\varphi_L(x) \to \varphi_{\infty}(x) := \abs{1-e(x)}/(2\pi x)$ as $L \to \infty$ uniformly in $x$.

Now, \eqref{eq:28:01} follows directly from \eqref{eq:21:02}, unless the main term in \eqref{eq:21:02} is small, which is the case when $\abs{\gamma}$ is close to $q^{-{L_i'}}$. More precisely, let us pick a parameter $\delta' > 0$, small but considerably larger than $\delta$; we aim to show that the left hand side of \eqref{eq:28:00} is $\gg \delta'$. Assuming that $\delta'/\delta$ is sufficiently large (in absolute terms), the expression in \eqref{eq:28:00} is $\geq \delta'/2$ provided that 
\begin{equation}\label{eq:21:50}
\varphi_{L_i'}(q^{L_i'} \gamma) \geq \delta',
\end{equation}
in which case we are done.

Suppose now that \eqref{eq:21:50} fails to hold. A standard calculus argument shows that
$q^{L_i'} \gamma = \pm 1+O(\delta')$, meaning that the expansion of $\gamma$ contains a long string of $(q-1)$'s starting at position $L_i'$ (of length $\geq \log_q 1/\delta' - O(1)$). In this case, we need to go back in the construction and replace $\alpha_i$ with $\alpha_i' = \alpha_i \pm O( q^{-L_i'}\delta')$ and change the relevant digits of $\alpha$ accordingly, where $\alpha_i'$ is chosen in such a way that for the new value of $\gamma$, say $\gamma'$, we have $q^{L_i'}\abs{\gamma'} \ll  \delta'$. This does not affect any of the estimates in a substantial way since the analogue of \eqref{eq:20:01a} remains valid, except that in several places $\delta$ needs to be replaced with $\delta'$.

It now follows by elementary analysis that if $\delta'$ is chosen sufficiently small then
\begin{equation}\label{eq:21:50b}
\varphi_{L_i'}(q^{L_i'} \gamma') \geq 1/2.
\end{equation}
Hence, the left hand side of (the modified version of) \eqref{eq:28:00} is $\geq 1/2 = \exp(-O(1))$. This concludes the proof of \eqref{eq:28:00}, and hence also of \eqref{eq:47:00}.
\end{proof}

\makeatletter{}\section{Applications and further directions}\label{sec:Epilogue}

\subsection*{Strongly multiplicativity}
A sequence $f \colon \NN_0 \to \UU$ is {\em strongly $q$-multiplicative} if it is multiplicative and additionally $f(qn)= f(n)$ for all $n$. Such a sequence is uniquely determined by the values $f(1), \dots, f(q-1)$. For instance, the sequence $f(n) = e( s_q(n) \alpha)$ is strongly $q$-multiplicative for every $\alpha \in \RR$. For these sequences, the assumptions of Theorem \ref{thm:oscillating=>Gowers-uniform} are particularly easy to check.

\begin{proposition}\label{prop:strong-multi-dichotomy}
	Suppose that $f \colon \NN_0 \to \UU$ is strongly $q$-multiplicative. Then $f$ is of Gelfond type of order $1$ unless there exists  $0 \leq p < q-1$ such that $f(n) = e( n p/(q-1))$ for all $n$.
\end{proposition}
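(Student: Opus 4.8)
The plan is to reduce everything to exponential sums over the blocks $[0,q^L)$, where strong $q$-multiplicativity turns the sum into a product governed by the map $\theta\mapsto q\theta$ on $\RR/\ZZ$, and then to read off the dichotomy from the cases in which this product fails to decay.

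\emph{Step 1: reduction and factorization.} By the comparison \eqref{eq:U2-vs-Fourier} between $\norm{\cdot}_{U^2}$ and the Fourier transform together with Lemma \ref{lem:wlog-N=q^L}, it suffices to bound $\sup_{\alpha\in\RR}\abs{\EEE_{n<q^L}f(n)e(n\alpha)}$ by $q^{-cL}$ for some $c>0$ (or else to exhibit the exceptional form of $f$). Since $f(aq^l)=f(a)$ for $0\le a<q$ and $l\ge 0$, this average factors as $\prod_{l=0}^{L-1}g(q^l\alpha)$, where $g(\theta):=\EEE_{a<q}f(a)e(a\theta)$ satisfies $\abs{g}\le 1$. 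Thus the whole question becomes: how large can $\prod_{l<L}\abs{g(q^l\alpha)}$ be?

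\emph{Step 2: the locus $\abs g=1$, and the easy case.} The key observation is that $\abs{g(\theta)}=1$ forces equality in the triangle inequality, so all the unit vectors $f(a)e(a\theta)$ ($0\le a<q$) must coincide; taking $a=0$ and using $f(0)=1$, this means $f(a)=e(-a\theta)$ for all $a<q$, in particular $f(a)=f(1)^a$ with $\theta$ pinned down by $f(1)=e(-\theta)$. Now split into two cases. If $f(a)\ne f(1)^a$ for some $2\le a<q$, then $\abs g<1$ on the compact group $\RR/\ZZ$, so $\max\abs g=1-\delta<1$ and $\prod_{l<L}\abs{g(q^l\alpha)}\le(1-\delta)^L$; this already yields the Gelfond bound, with no exceptional sequence. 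Otherwise $f(a)=f(1)^a$ for all $a<q$, and then $q$-multiplicativity forces $f(n)=f(1)^{s_q(n)}$; writing $f(1)=e(\beta)$ we are reduced to the generalized Thue--Morse case $f(n)=e(\beta s_q(n))$.

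\emph{Step 3: the dynamical argument, and the main obstacle.} For $f(n)=e(\beta s_q(n))$ one has $\abs{g(q^l\alpha)}=\Phi(\psi_l)$, where $\psi_l:=\beta+q^l\alpha\bmod 1$ and $\Phi(\psi):=\abs{\EEE_{a<q}e(a\psi)}=\tfrac1q\abs{\sin(\pi q\psi)/\sin(\pi\psi)}$; note $\Phi\le 1$ with equality exactly on $\ZZ$. The base-$q$ digit dynamics is the affine recursion $\psi_{l+1}=q\psi_l-(q-1)\beta\pmod 1$. If $\norm{\psi_l}<\e$ and $\norm{\psi_{l+1}}<\e$ then $\norm{(q-1)\beta}=\norm{q\psi_l-\psi_{l+1}}\le(q+1)\e$; hence, if $(q-1)\beta\notin\ZZ$, then choosing $\e$ with $(q+1)\e<\norm{(q-1)\beta}$ rules out two consecutive indices with $\norm{\psi_l}<\e$, so at least $\floor{L/2}$ of the indices $l<L$ satisfy $\norm{\psi_l}\ge\e$, and since $\Phi\le 1-\delta_\e<1$ there (compactness once more), $\prod_{l<L}\Phi(\psi_l)\le(1-\delta_\e)^{\floor{L/2}}$, giving the required decay. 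Finally, if $(q-1)\beta\in\ZZ$ then $\beta=p/(q-1)$ for an integer $p$ which, after reduction modulo $q-1$, lies in $[0,q-1)$; using $s_q(n)\equiv n\pmod{q-1}$ gives $f(n)=e(\beta s_q(n))=e(pn/(q-1))$ for all $n$, the excluded sequence. The routine reduction to $N=q^L$ and the two compactness bounds are painless; the genuine content is recognizing that the only obstruction to geometric decay of $\prod_l\Phi(\psi_l)$ is the orbit $(\psi_l)$ of $\psi\mapsto q\psi-(q-1)\beta$ repeatedly returning near $0$, which forces the drift $(q-1)\beta$ to be an integer, and then pinning down $f$ exactly as $e(pn/(q-1))$, $0\le p<q-1$ — the step where the arithmetic $s_q(n)\equiv n\bmod(q-1)$ enters.
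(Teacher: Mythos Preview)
Your proof is correct and follows essentially the same approach as the paper's: factor the exponential sum as $\prod_{l<L}\phi(q^l\alpha)$ and exploit that two consecutive factors cannot both be (close to) $1$ unless $f$ is a linear phase with frequency $p/(q-1)$. The paper's version is a bit more streamlined: rather than your case split in Step~2 (first dispatching $\max|g|<1$, then reducing to the generalized Thue--Morse case), it directly considers $\sup_\theta \phi(\theta)\phi(q\theta)$; if this is $<1$ the Gelfond bound is immediate, and if it equals $1$ then continuity gives a $\theta$ with $\phi(\theta)=\phi(q\theta)=1$, which simultaneously yields $f(a)=e(a\theta)$ and $(q-1)\theta\in\ZZ$ --- exactly the two pieces you extract separately in Steps~2 and~3.
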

\begin{proof}
	Let $\phi(\alpha) = \abs{ \EE_{a < q} f(a) e(-a \alpha)}$. Since
	$$
		\abs{ \EEE_{n<q^L} f(n) e(- \alpha n) } = \prod_{l < L} \phi(q^l \alpha),
	$$ 
	it is easy to check that $f$ is of Gelfond type unless $\sup_\beta \phi(\beta) \phi(q\beta) = 1$. Suppose that the latter condition holds. Since $\phi$ is clearly continuous, there exists $\beta$ with $\phi(\beta) = \phi(q \beta) = 1$. By the triangle inequality, this in turn implies that
	$$ 1 = f(0) = f(a) e(- a \beta) = f(a) e(- a q \beta) $$
	for any $1 \leq a < q$. In particular, $f(a) = e(a\beta)$ for $0 \leq a < q$ and $(q-1) \beta \in \ZZ$. It follows that $f$ is given by $f(n) = e(n \beta)$, hence it falls into one of the cases mentioned above. 
\end{proof}

In other words, any strongly $q$-multiplicative sequence is of Gelfond type of order $1$ (and hence Gowers uniform of all orders), unless it is periodic (in which case, it has period $q-1$). 

\subsection*{Counting patterns}
We are now ready to outline the proof of Theorem \ref{thm:sum-of-digits-patterns}. Because this is a standard application, we skip some of the technical details. Note that by a slightly more careful reasoning it is possible to obtain a similar result with a more explicit error term.

\begin{proof}[\ifLMS{}{Proof}of Theorem \ref{thm:sum-of-digits-patterns}]
	We first prove the second part of the statement. Fix an integer $Q$ coprime to $q-1$. By Theorem \ref{thm:oscillating=>Gowers-uniform} and Proposition \ref{prop:strong-multi-dichotomy}, all sequences $e(s_q(n)p/Q)$ are Gowers uniform of all orders for $0 < p < Q$. Put 
	$$A_j = \set{ n \in \NN_0 }{ s_q(n) \equiv j \bmod Q }.$$ Applying Fourier decomposition
	$$1_{A_j}(n) = \EEE_{a < Q} e\bra{ \frac{a}{Q}( s_q(n)-j)},$$
	we conclude that the balanced characteristic functions $1_{A_j} - {1}/{Q}$ are Gowers uniform of all orders. It follows from the generalised von Neumann theorem, Theorem \ref{thm:von-Neumann}, that there are $\gg N^2$ progressions $n,n+m,\dots,n+(k-1)m$ contained in $[N]$ with $n+j m \in A_{r_j}$ for $0 \leq j < k$.
	
	Coming back to the first part of the statement, approximating $1_{I_j}$ by trigonometric polynomials and repeating the previous argument, we see that it will suffice to show that for each $a \in \ZZ \setminus \{0\}$, the sequence $e( a s_{q}(n) \alpha)$ is Gowers uniform of all orders. This follows directly from Theorem \ref{thm:oscillating=>Gowers-uniform}.
\end{proof}

\subsection*{Bertrandias pseudorandomness}
A function $f \colon \NN_0 \to \UU$ is \emph{pseudorandom} in the sense of Bertrandias \cite{Bertrandias-1964}, (see also \cite{Coquet-1976,CoquetKamaeMendesFrance-1977}) if the correlation coefficients
\begin{equation}
\label{eq:60:00}
	\gamma_r = \lim_{N \to \infty} \EEE_{n<N} f(n+r) \bar f(n) 
\end{equation}
exist for all $r \geq 0$ and converge to $0$ in density, i.e.,
\begin{equation}
\label{eq:60:01}
	\lim_{R \to \infty} \EEE_{r < R} \abs{\gamma_r}^2 = 0,
\end{equation}
meaning that $f$ admits a spectral measure which is continuous (without atoms). 
It follows from \cite{Coquet-1976} that a $q$-multiplicative sequence is pseudorandom in the sense of Bertrandias if and only if for each $\alpha \in \RR$, $\EE_{n<N} f(n) e(n \alpha) \to 0$ as $N \to \infty$, i.e., $f$ is oscillating of order $1$. Related results are also obtained by Spiegelhofer for the Ostrowski sums of digits function \cite{Spiegelhofer-2016}.

If $f$ additionally is of Gelfond type, then we can prove a sharper estimate, namely
\begin{equation}
\label{eq:60:90}
 \EEE_{r < R} \abs{\gamma_r}^2 \ll R^{-c}
\end{equation}
for a constant $c > 0$. This implies that the Hausdorff dimension of the spectral measure is strictly positive. We first show that the limits defining $\gamma_r$ converge rapidly:
\begin{equation}
\label{eq:60:91}
\EEE_{n<N} f(n+r) \bar f(n) = \gamma_r + O(r\log N/N).
\end{equation}
For $r = 1$, considering the largest power of $l$ dividing $n$ we obtain a simple estimate:
\begin{align*}
\EEE_{n<N} f(n+1) \bar f(n) &= \sum_{l=0}^\infty \sum_{a=1}^q \PP_{n < N}\bra{n+1 \equiv aq^l\bmod{q^{l+1}}} f(aq^{l})\bar f(aq^{l}-1) 
\\&= \sum_{l=0}^{\infty} \sum_{a=1}^q \bra{ q^{-(l+1)} f(aq^{l})\bar f(aq^{l}-1) + O(\min(1/N,1/q^l)) } 
\\&= \gamma_r + O\bra{{\log N}/{N}}.
\end{align*}
For $r > 1$ one can either run an analogous computation or block the terms in \eqref{eq:60:91} into intervals whose length is the least power of $q$ exceeding $r$.
Letting $R$ be a large integer and taking $N = R^{1+\e}$ where $\e > 0$ is a small constant we have
\begin{equation}
\label{eq:60:02}
	\EEE_{r < R} \abs{\gamma_r}^2 = R^{\e} \EEE_{r,n,m < N} f(n+r) \bar f(n) \bar f(m + r) \bar f(m) 1_{[R]}(r) + O(\log R/R^{\e}).
\end{equation}
The claim \eqref{eq:60:90} now follows by estimating the right hand side of \eqref{eq:60:02} using the generalised von Neumann theorem \ref{thm:von-Neumann} and applying Theorem \ref{thm:oscillating=>Gowers-uniform}.

\subsection*{Quasimultiplicativity} In \cite{KropfWagner-2017}, Kropf and Wagner introduce the notion of a $q$-quasimultiplicative sequence, which is considerably weaker than that of a strongly $q$-multiplicative sequence (by an unfortunate quirk of the terminology, a $q$-multiplicative sequence need not be $q$-quasimultiplicative\footnote{In general, it might be more apt to use the term ``strongly $q$-quasimultiplicative''; however, here we keep the existing terminology for the sake of consistency, cf.\ \cite{Konieczny-2018-semimulti}.}). 

A sequence $f \colon \NN_0 \to \UU$ is {\em  $q$-quasimultiplicative} if there exists $r \geq 0$ such that $f(m + q^{t+r} n) = f(m)f(n)$ for any $n,m \in \NN_0$ with $m < q^t$. This class turns out to contain many interesting examples (see \cite{KropfWagner-2017} for details), especially the block counting sequences. In particular, the Rudin--Shapiro sequence (taking values $\pm 1$ depending on the parity of the number of appearances of the block $11$ in binary expansion of $n$) is $2$-quasimultiplicative but not $2$-multiplicative.

In light of the fact that the Rudin--Shapiro sequence is known to be Gowers uniform of all orders by \cite{Konieczny-2017+}, it seems plausible that our results can be extended to $q$-quasimultiplicative sequences. We do not pursue this question further in this paper, except to state the following conjecture.

\begin{conjecture}\label{conj:quasimultiplicative}
	Let $f \colon \NN_0 \to \UU$ be a $q$-quasimultiplicative sequence
	such that
	\begin{equation}\label{eq:osc-301}
		\sup_{ \alpha \in \RR } \abs{ \EEE_{n < N} f(n) e( \alpha n )} \ll N^{-c}. 
	\end{equation}
	Then for any $s \geq 1$ there exists $c_s > 0$ such that 
	\begin{equation}\label{eq:osc-302}
		\norm{f}_{U^s[N]} \ll N^{-c_s}.
	\end{equation}
\end{conjecture}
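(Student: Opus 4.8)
The plan is to follow the proof of Theorem~\ref{thm:oscillating=>Gowers-uniform} essentially step by step, so that it again suffices to establish the scale-free estimate $\norm{f}_{U^s[N]} \ll \norm{f}_{U^2[N]}^{\kappa_s}$ for an arbitrary $q$-quasimultiplicative $f$; the hypothesis \eqref{eq:osc-301} then yields \eqref{eq:osc-302} through \eqref{eq:U2-vs-Fourier} exactly as in the multiplicative case. The structural fact that makes such an adaptation realistic is the identity $f(q^r n) = f(n)$, obtained by taking $m = 0$ and $t = 0$ in the definition; it shows that among the shifted sequences $S^l f$ only the $r$ sequences $S^0 f, S^1 f, \dots, S^{r-1} f$ are distinct, so that the base-$q$ digits of $n$ still control $f$, now through a bounded amount of extra data --- the residue of the cut position modulo $r$ and the configuration of the $r$ ``buffer'' digits. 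This is precisely the mechanism used to handle the Rudin--Shapiro sequence in \cite{Konieczny-2017+}.

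Two modules must be rebuilt. The first is the recursion of Lemma~\ref{lem:recurrence} together with its consequence Proposition~\ref{prop:E-bound=>A-bound}. When one cuts a parallelepiped in $[q^L]$ at level $l$, quasimultiplicativity factorises $f$ cleanly at a vertex only when the $r$ digits of that vertex in positions $l-r, \dots, l-1$ vanish; since these buffer digits of the $2^s$ vertices are coupled through carries, the right bookkeeping is to condition on the joint buffer configuration (a point of a fixed finite set, subject to carry constraints) in addition to the carry vector $\bb r$. The recurrence \eqref{eq:50:00} then becomes a finite-state recursion of the shape $A(f,\bb r,L) = \sum_{\bb r',\,j} A(S^j f, \bb r', L-l)\, \widetilde{W}^{(l)}(f,\bb r,\bb r',j) + O(q^{-(L-l)})$ with $j$ ranging over a fixed finite index set, and since the total mass of the weights is still $1$, the two-parity telescoping in the proof of Proposition~\ref{prop:E-bound=>A-bound} should go through with $r$-dependent constants. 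One must also redo the reduction to $N = q^L$ (Lemma~\ref{lem:wlog-N=q^L}): the partition of $[q^{L+1}]$ into the translates $aq^L + [q^L]$ has to be replaced by one compatible with the $r$-digit buffer, which costs an extra factor $q^{O(r)} = O(1)$.

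The second module is the rigidity statement Proposition~\ref{prop:rigidity} and its Corollary~\ref{cor:pretentious=>linear}: one must show that a $q$-quasimultiplicative sequence which, on a density-$1$ subset of $[q^L]$, resembles a polynomial phase $e(p(n))$ with $\deg p < s$ is in fact a linear phase, up to a $q^{O(1)}$-periodic rational polynomial. The inclusion--exclusion used there to pin down the coefficients $\alpha_d$ for $d \geq 2$ invokes the relation $f(m+n) = f(m)f(n)$ only when the base-$q$ supports of $m$ and $n$ are disjoint; for quasimultiplicative $f$ this holds once those supports are separated by at least $r$ zeros, so one restricts the $d$-element sets $J$ of digit positions appearing in the argument to be $r$-separated --- there remain $\gg q^{dL}$ admissible choices in every range needed --- and the downward induction on $d$, as well as the concluding Fourier-analytic identification of the linear part, should carry over with ``$q$-multiplicative'' replaced by ``$q$-quasimultiplicative'' throughout.

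The main obstacle, I expect, is the interaction between these two modules, that is, the analogue of Proposition~\ref{prop:oscillating=>E-saving}. In the multiplicative setting both the recursion and the rigidity statement concern the single sequence $f$, whereas here the recursion produces a whole vector of auxiliary averages indexed by the state $j$, and the step ``no decay at a given scale $\Rightarrow$ pretentious'' must be applied jointly to the finite family $S^0 f, \dots, S^{r-1} f$ together with the consistency relations tying them together, so as to extract a single Fourier frequency $\alpha$ serving all of them at once. Synchronising this joint rigidity with the delicate digit-by-digit construction of $\alpha$ in \eqref{eq:47:02}--\eqref{eq:47:10}, including the back-tracking adjustments of the $\alpha_i$, is where the genuine work lies; everything else is bookkeeping with $r$-dependent constants.
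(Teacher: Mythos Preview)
The statement is a \emph{conjecture} in the paper, not a theorem: the authors explicitly write that they ``do not pursue this question further in this paper, except to state the following conjecture.'' There is thus no proof in the paper to compare your proposal against.

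Your proposal is, appropriately, a research plan rather than a proof, and you are candid about this --- you say the recursion ``should go through'', the rigidity argument ``should carry over'', and you identify where ``the genuine work lies''. As a plan it is sensible and in line with the paper's own remark that ``it seems plausible that our results can be extended to $q$-quasimultiplicative sequences'', pointing to the Rudin--Shapiro case in \cite{Konieczny-2017+}. The structural observation $f(q^r n)=f(n)$ and the resulting finite-state viewpoint on the shifts $S^l f$ are indeed the natural starting point, and the idea of enlarging the state space of the recursion in Lemma~\ref{lem:recurrence} to track buffer digits is exactly how \cite{Konieczny-2017+} handles Rudin--Shapiro.

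One caution: the adaptation of Proposition~\ref{prop:rigidity} may be less routine than you suggest. The inclusion--exclusion identity \eqref{eq:144:21} uses $f(n+m_I)=f(n)\prod_{l\in I}f(q^l)$ for $n\in\cF_J$; in the quasimultiplicative setting this requires not only that the positions in $J$ be mutually $r$-separated but also that $n$ have zero digits in an $r$-neighbourhood of each position in $J$, which shrinks $\cF_J$ and changes the density bookkeeping. More substantially, the step establishing that $e(b'(n))$ is itself $q$-multiplicative (equations \eqref{eq:144:80}--\eqref{eq:144:82}) and the subsequent Fourier argument isolating the period $P_0\mid q^\infty$ both lean on exact multiplicativity; the quasimultiplicative analogue would have to produce a $q$-quasimultiplicative structure on $e(b'(n))$ and then argue from that, which is not obviously the same computation. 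In short: your outline correctly identifies the modules and the main obstacle, but what remains is an open problem, not bookkeeping.
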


\subsection*{Boundedness}
Throughout the paper, we work with sequences $f \colon \NN_0 \to \CC$ taking values in $\UU$. With minor modifications, this could be weakened to the assumption that $\abs{f(n)} \leq 1$ for all $n \in \NN_0$. It is possible that similar results could be obtained for sufficiently slowly growing sequences. However, we do not pursue this line of inquiry further. 

\makeatletter{} \appendix
\section{Alternative proof of Theorem \ref{thm:oscillating=>fully-oscillating}}

If one is only interested in proving Theorem \ref{thm:oscillating=>fully-oscillating}, an arguably simpler argument is possible. We shall need a preliminary lemma.
We note in passing that this lemma together with Theorem 3 in \cite{Fan2017c} yields a slightly stronger conclusion in Theorem \ref{thm:WET}.
 
\begin{lemma}\label{prop:oscillating-equivalent}
	Suppose that $f \colon \NN_0 \to \UU$ is $q$-multiplicative and $c > 0$. Then the estimate 
		\begin{equation}\label{eq:osc-12}
		\sup_{\substack{ p \in \RR[x] \\ \deg p \leq d}} \abs{ \EEE_{n < q^L} f(n) e(p(n))} \ll q^{-cL}
	\end{equation}
	is equivalent to the apparently stronger estimate 
	\begin{equation}\label{eq:osc-13}
		\sup_{M \geq 0} \sup_{\substack{ p \in \RR[x] \\ \deg p \leq d}} \abs{ \EEE_{n < N} f(n+M) e(p(n))} \ll N^{-c}.
	\end{equation}
\end{lemma}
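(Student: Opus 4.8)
The statement is an equivalence, but one direction is trivial: taking $M = 0$ and restricting $N$ to powers of $q$, \eqref{eq:osc-13} immediately implies \eqref{eq:osc-12}. So the whole content is the forward implication \eqref{eq:osc-12} $\Rightarrow$ \eqref{eq:osc-13}. The plan is to reduce a general average $\EEE_{n<N} f(n+M) e(p(n))$ to a controlled number of ``clean'' averages of the form $\EEE_{n<q^L} f(n) e(q(n))$ to which \eqref{eq:osc-12} applies, losing only constant factors and a polynomial-in-$N$ error.

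\textbf{Step 1: reduce to $N$ a power of $q$.} First I would observe that it suffices to prove \eqref{eq:osc-13} when $N = q^L$ is a power of $q$: given arbitrary $N$, pick $L$ with $q^{L} \le N < q^{L+1}$, so $q^{L} > N/q$. Partitioning $[0,N)$ into the block $[0,q^{L})$ together with a leftover interval of length $< q^{L+1} - q^{L} \le N$... actually more carefully, write $[0,N)$ as a disjoint union of $O(1)$ intervals each of which is a translate of $[0,q^{L_j})$ with $q^{L_j} \gg N$ (greedy $q$-adic decomposition of $N$ uses $\le (q-1)(L+1)$ such blocks, but only $O(1)$ of them have length $\gg N$, and the rest contribute a total of $O(N/q^{L}) = O(1)$ blocks... in fact the standard trick: $N = \sum a_j q^{j}$, and grouping gives intervals $[c_j, c_j + q^{j})$; the contribution of all blocks with $q^{j} \le \eta N$ is $O(\eta N)$ in absolute count of points, hence $O(\eta)$ after normalisation). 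By the triangle inequality it then suffices to bound each $\abs{\EEE_{n < q^{L'}} f(n + M') e(p(n + c))}$ with $q^{L'} \gg N$ and absorb $p(n+c)$ into a new polynomial of the same degree $\le d$. Thus we are reduced to averages of the form $\EEE_{n < q^L} f(n + M) e(p(n))$.

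\textbf{Step 2: strip off the $q$-adic tail of $M$.} Write $M = M_1 + M_0$ where $M_0 = M \bmod q^{L}$ and $M_1 = M - M_0$ is divisible by $q^{L}$. For $n < q^{L}$ we still may have carries from $n + M_0$ into the block of $M_1$, so I would instead pass to a slightly larger scale: choose $L' = L + t$ with $q^t$ a fixed large constant (to be chosen) and consider the block $[0, q^{L'})$. Decompose the range of summation so that we are looking at $\EEE_{n < q^{L'}} f(n + M') e(p(n))$ where now $M' = M \bmod q^{L'+1}$, say, and note that the number of $n < q^{L'}$ for which the addition $n + M'$ produces a carry past position $L'+1$ is at most... hmm. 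Cleaner: for a $q$-multiplicative $f$ and $n < q^{L'}$ with $q^{L'} \mid n'$ we have $f(n + n') = f(n) f(n')$. So writing $M' = a q^{L'} + M''$ with $M'' < q^{L'}$: when $n + M'' < q^{L'}$ (which holds for all but a $\le M''/q^{L'}$ proportion of $n$, and after translating the $n$-range this is a genuine sub-block issue) we get $f(n + M') = f(n + M'') f(a q^{L'})$, a constant times $f(n + M'')$. Iterating/handling the boundary block of size $M''$ recursively (its length is $< q^{L'}$, so after $O(L')$ steps, or better, after grouping, $O(1)$ steps at comparable scale) reduces everything to averages $\EEE_{n < q^{\ell}} f(n) e(\tilde p(n))$ with $q^{\ell} \gg N$ and $\deg \tilde p \le d$, up to a multiplicative constant and an additive error $O(N^{-c})$ coming from the geometric-type series of boundary blocks. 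Applying \eqref{eq:osc-12} to each of these $O(1)$ clean averages gives the bound $O(q^{-c\ell}) = O(N^{-c})$, as desired.

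\textbf{Main obstacle.} The genuinely delicate point is Step 2: controlling the interaction between the shift by $M$ and the $q$-adic block structure, because $n \mapsto f(n+M)$ is \emph{not} $q$-multiplicative and carries propagate. The right way to organise it is probably to fix a scale $q^L \asymp N$, split $[M, M+N)$ into maximal $q$-adic intervals $[c q^{j}, (c+1) q^{j})$ (there are $O(L)$ of them, but only $O(1)$ with $q^{j} \gg N$ and the rest summing to a negligible count of points), observe that on each such interval $f$ restricted there equals a \emph{constant} times $f$ on $[0, q^{j})$ by $q$-multiplicativity, and that the constant has modulus $1$ so it is harmless after taking absolute values — except that the polynomial phase $e(p(n))$ must be re-expanded around the base point $c q^j$, which keeps the degree $\le d$ and only changes the coefficients. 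I expect the bookkeeping of these $O(1)$ essential blocks plus the $O(1/N)$-fraction error term from the many small blocks to be the entire technical weight of the argument; everything else is the triangle inequality and \eqref{eq:osc-12}. One should also double-check that the implicit constants do not secretly depend on $M$ — they don't, since the number of essential blocks and the re-expansion of $p$ are uniform in $M$.
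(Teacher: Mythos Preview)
Your high-level idea is exactly the paper's: decompose $[M,M+N)$ into $q$-adic intervals $[m_j, m_j+q^{t_j})$ with $q^{t_j}\mid m_j$, use $q$-multiplicativity to replace $f(n+m_j)$ by $f(m_j)f(n)$ on each block, absorb the shift of the polynomial into a new polynomial of the same degree, and invoke \eqref{eq:osc-12}. Steps 1 and 2 and the meandering about carries are all subsumed by this single decomposition; there is no need to first reduce $N$ to a power of $q$ or to ``strip off'' the tail of $M$ separately.

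There is, however, a genuine gap in your accounting. You repeatedly assert that only $O(1)$ blocks with $q^{j}\gg N$ matter and that the remaining small blocks are ``negligible'' (at one point you even write ``$O(1/N)$-fraction''). With a \emph{fixed} threshold $\eta$, the small blocks occupy an $O(\eta)$-proportion of $[M,M+N)$, and bounding $f$ trivially on them contributes an additive $O(\eta)$ to the normalised average. That is $o(1)$ as $\eta\to 0$, but it is \emph{not} $O(N^{-c})$; you would lose the power saving, and hence the equivalence with the same exponent $c$ that the lemma claims. Optimising $\eta$ in terms of $N$ does not quite rescue this either without effectively reverting to the full sum.

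The fix is simply not to discard anything. In the canonical $q$-adic decomposition of $[M,M+N)$ each scale $t$ appears at most $2(q-1)=O(1)$ times and $t\le L$ where $q^L\le N$. Applying \eqref{eq:osc-12} to \emph{every} block gives
\[
\abs{\EEE_{n<N} f(n+M)e(p(n))}
\le \frac{1}{N}\sum_j q^{t_j}\cdot O\bigl(q^{-c t_j}\bigr)
\ll \frac{1}{N}\sum_{t=0}^{L} q^{(1-c)t}
\ll \frac{q^{(1-c)L}}{N}
\ll N^{-c},
\]
the geometric series being dominated by its top term since $c<1$. This is exactly the paper's (two-line) argument, and it preserves the exponent $c$.
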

\begin{proof}
	Assume that \eqref{eq:osc-12} holds. Because $f$ is $q$-multiplicative, for any $t, m \geq 0$ with $q^t \mid m$ we have
\begin{equation}\label{eq:osc-14}	
		\sup_{\substack{ p \in \RR[x] \\ \deg p \leq d}}  \abs{ \EEE_{n < q^t} f(n+m) e(p(n))}
		= \sup_{\substack{ p \in \RR[x] \\ \deg p \leq d}}  \abs{ \EEE_{n < q^t} f(n) e(p(n))} \ll q^{-ct}.
	\end{equation}
	It remains to split the interval $[M,M+N)$ into $q$-adic intervals of the form $[m_j, m_j+q^{t_j})$ where each $t_j$ appears $O(1)$ times, and to bound each of thus obtained sums independently, using \eqref{eq:osc-14}. 
\end{proof}

We now proceed to the proof of Theorem \ref{thm:oscillating=>fully-oscillating}. Since the argument is presented here only for expository purposes, we restrict our attention to the special case of monomials of degree $2$. The argument is closely modelled on the proof of a similar assertion in \cite{EisnerKonieczny-2018}.

\begin{proof}[\ifLMS{}{Proof }of Theorem \ref{thm:oscillating=>fully-oscillating}, case $p(n) = \alpha n^2$]
	We shall show that under the assumption 
	\begin{equation}\label{eq:osc-01x}
		\sup_{ \alpha \in \RR } \abs{ \EEE_{n < N} f(n) e( \alpha n )} \ll N^{-c_1} 
	\end{equation}
	 we have the bound	
	\begin{equation}\label{eq:osc-03}
		\sup_{\alpha \in \RR} \abs{ \EEE_{n < N} f(n) e(\alpha n^2)} \ll N^{-c_2}, 
	\end{equation}
	where $c_2 > 0$ is a constant, dependent only on $c_1$, to be determined in the course of the proof. By Lemma \ref{prop:oscillating-equivalent}, we may assume that $N = q^L$ is a power of $q$.
	
    Fix a value of $\alpha$, and denote the expression on the left hand side of \eqref{eq:osc-03} by $S :=\abs{ \EE_{n < N} f(n) e(\alpha n^2)} $. The strategy of the proof is to attempt to show \eqref{eq:osc-03} by the van der Corput inequality. Either this attempt succeeds, in which case we are done, or we are able to extract useful information about $\alpha$, which then enables us to apply a different method.
 	
	Let $\delta > 0$ be a small constant, and let $H \sim q^{\delta L}$ be a power of $q$. Applying van der Corput inequality, we conclude that 
	\begin{equation}\label{eq:osc-04}
		 S^2 \ll \EEE_{h < H} \abs{ \EEE_{n < N} \bar f(n) f(n+h) e(2 h \alpha n)} + O(H/N).
	\end{equation}
	As the next step, we further split the inner average in \eqref{eq:osc-04}. Let $Q \sim q^{2\delta L}$ be a power of $q$; writing any $n < N$ as $n = Q n' + m$ with $m < Q$ we obtain
	\begin{equation}\label{eq:osc-05}
		 S^2 \ll \EEE_{h < H} \EEE_{m < Q} \abs{ \EEE_{n < N/Q} \bar f(Q n + m) f(Qn+m+h) e(2 h Q \alpha n)} + O(H/N).
	\end{equation}
	The inner average in \eqref{eq:osc-05} is particularly easy to analyse if $ m+h < Q$, in which case $q$-multiplicativity of $f$ implies that $\bar f(Q n + m) f(Qn+m+h) = \bar f(m) f(m+h)$, whence we can simplify:
	\[
	\abs{ \EEE_{n < N/Q} \bar f(Q n + m) f(Qn+m+h) e(2 h Q \alpha n) } =
	\abs{ \EEE_{n < N/Q} e(2 h Q \alpha n)}.
	\]
	Using the trivial bound when $m+h \geq Q$ or $h = 0$, and applying the classical estimate $\abs{ \EE_{n < X} e(\theta)} \ll 1/(X \norm \theta)$, we conclude that 
	\begin{equation}\label{eq:osc-06}
	S^2 \ll \frac{1}{N/Q} \cdot \frac{1}{ \min_{1 \leq h < H} \norm{2hQ \alpha} } + O(1/H) + O(H/Q).
	\end{equation}

If $S^2 \ll q^{-\delta L}$, we are done (and we may take $c_2 = \delta/2$). Otherwise, \eqref{eq:osc-06} implies that there exists $k \ll q^{3 \delta L}$ with $k \neq 0$, such that $\norm{ k \alpha } \ll N^{-1 + 3 \delta}$. Write $\alpha = l/k + \zeta$ with $l \in \ZZ$ and $\abs{\zeta} \ll N^{-1 + 3 \delta}$. 
	
We may now use the above Diophantine approximation of $\alpha$ to bound the portion of the sum defining $S$ corresponding to a long interval. It will be convenient to use the Fourier expansion 
$$
	e(n^2 j/k ) = \sum_{i = 0}^{k-1} a_{i,j} e(n i/k),
$$
where $\abs{a_{i,j}} \leq 1$. 

Pick $M \sim q^{L/10}$, a power of $q$. For any $m_0 \geq 0$ with $M \mid m_0$ we may estimate that
	\begin{align*}
	\abs{ \EEE_{n \in [m_0, m_0 + M) } f(n) e\bra{ \alpha n^2 } } &=
		\abs{ \EEE_{n < M} f(n) e\bra{ 2 \alpha m_0 n + n^2 \alpha } }
		\\ & = \abs{ \sum_{i=0}^{k-1} \EEE_{n < M} f(n) a_{i,l} e\bra{ (2 \alpha m_0 + i/k)n } } + O(M^2 \norm{ \zeta})
		\\ & = O(k M^{-c_1}) + O(M^2 \norm{\zeta}) = O(N^{-c_2}),
	\end{align*}
	where in the last line $c_1$ is the constant from the assumption \eqref{eq:osc-01x}, and 
	\begin{equation}\label{eq:def-of-c2}
		c_2 = \min\bra{ c_1/10 - 3 \delta,\ 8/10 - 3 \delta};
	\end{equation}
	we assume $\delta$ was chosen small enough that $c_2 > 0$. It follows by splitting $[0,N)$ into disjoint intervals $[m_i,m_i+M)$ that $S \ll N^{-c_2}$, as needed.
\end{proof}

\makeatletter{} 
\makeatletter{}

\bibliographystyle{alphaabbr}
\bibliography{bibliography}

\end{document}